\documentclass[11pt]{article}
\usepackage{amsmath,amssymb,amsthm,geometry,hyperref,lmodern}
\hypersetup{hidelinks}

\newtheorem{theorem}{Theorem}[section]
\newtheorem{lemma}[theorem]{Lemma}
\newtheorem{proposition}[theorem]{Proposition}

\newcommand{\PP}{\mathbb{P}}
\newcommand{\F}{\mathbb{F}}
\newcommand{\Span}{\operatorname{span}}
\newcommand{\Ann}{\operatorname{Ann}}
\newcommand{\Vbar}[1]{\overline{V_{#1}}}

\newcommand{\bv}{\mathbf{v}}
\newcommand{\btv}{\widetilde{\mathbf{v}}}
\newcommand{\bp}{\mathbf{p}}
\newcommand{\bw}{\mathbf{w}}
\newcommand{\bb}{\mathbf{b}}
\newcommand{\bz}{\mathbf{z}}
\newcommand{\by}{\mathbf{y}}

\newcommand{\cC}{\mathcal{C}}

\newcommand{\cA}{\mathcal{A}}
\newcommand{\cI}{\mathcal{I}}
\newcommand{\cZ}{\mathcal{Z}}
\newcommand{\cG}{\mathcal{G}}
\newcommand{\cH}{\mathcal{H}}
\newcommand{\cD}{\mathcal{D}}

\title{On the MDS conjecture in even characteristic}
\author{Bocong Chen$^1$ and Jing Huang$^2$\footnote{E-mail addresses: {\it
bocongchen@foxmail.com (B. Chen),
jhuangmath@foxmail.com (J. Huang),
}}
}

\date{\small
$1.$ School of Mathematics, South China University of Technology, Guangzhou 510641, China\\
$2.$
School of Mathematics and Information Science,
Guangzhou University, Guangzhou 510006, China\\
}

\begin{document}
\maketitle

\begin{abstract}
We prove that every nontrivial linear $[n,k]$ maximum distance separable (MDS)
code over a finite field $\F_q$ of characteristic two satisfies
$n\le q+1$, except in dimensions $k=3$ and $k=q-1$, where
$n\le q+2$. This resolves the linear MDS conjecture in even characteristic.

\vspace{0.2cm}
\noindent\textbf{Keywords:}
MDS conjecture; linear MDS codes; projective arcs;  
Reed--Solomon codes; Artinian Gorenstein algebras.

\medskip
\noindent\textbf{2020 Mathematics Subject Classification:}
Primary 94B05; Secondary 51E21, 94B65.
\end{abstract}

\section{Introduction}

Let $\F_q$ be the finite field with $q$ elements. A linear $[n,k]$
code over $\F_q$ is a $k$-dimensional subspace of $\F_q^n$.
If its minimum Hamming distance is $d$, the Singleton bound gives
$d\le n-k+1$~\cite{Singleton64}. A code attaining equality is called
\emph{maximum distance separable}, or \emph{MDS}. Equivalently,
every $k$ columns of a generator matrix are linearly independent,
so that any $k$ coordinates determine a codeword. We call the code
\emph{nontrivial} when $2\le k\le n-2$. Its dual is an
$[n,n-k]$ MDS code; see~\cite[Chapter~7]{HP03} and
\cite[Chapter~11]{MS77}. The basic extremal question is how long
such a code can be when the field and the dimension are fixed.

This question has a geometric formulation that predates the
coding-theoretic terminology. An \emph{arc} in
$\operatorname{PG}(k-1,q)=\PP(\F_q^k)$ is a set of at least $k$
points such that every $k$ of them span the whole projective space.
The projective points represented by the columns of an MDS generator
matrix form an arc, and every arc gives an MDS code. In his 1955
study of normal rational curves, Segre~\cite{Segre55a} posed three
related problems concerning the maximum size and structure of arcs,
their characterization as normal rational curves, and their
extendibility. The maximum-size question is the geometric origin of
the MDS conjecture. Segre's theorem that every $(q+1)$-arc in
$\operatorname{PG}(2,q)$ is a conic when $q$ is odd
\cite{Segre55b} provided an early instance in which the extremal
size and the geometry could both be determined.

The expected answer is suggested by two classical constructions.
Normal rational curves have $q+1$ points in the relevant dimensions
and correspond to extended Reed--Solomon codes
\cite{RS60,HP03,BL19}. When $q$ is even, a nonsingular conic together
with its nucleus, the common point of its tangent lines, is a
\emph{hyperoval}: an arc of size $q+2$ in $\operatorname{PG}(2,q)$.
It gives an $[q+2,3]$ MDS code, whose dual has dimension $q-1$.
The \emph{linear MDS conjecture} asserts that a nontrivial linear
MDS code has length at most $q+1$, with precisely these two
even-characteristic exceptions, for which the bound is $q+2$
\cite{MS77,BL19}. Thus the conjecture asks whether the lengths
supplied by the classical constructions are optimal. In coding
terms, it determines the largest block length compatible with
recovery from every pattern of $n-k$ erasures.

Much of the progress on this question grew from Segre's method of
tangents. For a planar arc, a tangent is a line meeting the arc in
exactly one point. Segre associated an algebraic curve in the dual
plane with these tangent lines, converting incidence conditions
into polynomial identities~\cite{Segre67}.
Bruen, Thas and Blokhuis~\cite{BTB88} developed the corresponding
higher-dimensional construction in even characteristic and obtained
results on the size, structure and extension of large arcs.
Storme and Thas~\cite{ST93} strengthened the resulting bounds.
Their work establishes the conjectured bound, with its stated
exceptions, for
$
 k<\frac{\sqrt q}{2}+\frac{15}{4}
$
when $q$ is even, as well as in the dimensions obtained by duality.
Separate geometric arguments settled low-dimensional cases:
Casse~\cite{Casse69} treated $k=4,5$;
Casse and Glynn~\cite{CG84} obtained the case $k=6$ through their
classification of large arcs in $\operatorname{PG}(4,q)$;
and Maruta and Kaneta~\cite{MK91} obtained $k=7$ for $q\ge16$.
Here $k$ denotes the code dimension, one more than the projective
dimension.

The tangent method also led to general results in odd
characteristic. Voloch~\cite{Voloch91} used bounds for rational
points on algebraic curves to control large planar arcs over fields
of nonsquare order. Ball and Lavrauw~\cite{BL18} later obtained
stronger conic-containment results for odd square fields by a
polynomial approach based on a scaled form of the lemma of tangents.
Projection arguments turn these planar results into MDS bounds in
ranges of dimensions growing with the field order. The exposition
of Ball and Lavrauw~\cite{BL19} brings together these geometric
methods, the relevant projection theorems, and the tangent
hypersurface construction used in the present paper.

A further development was to extract linear systems directly from
the tangent identities. If $q=p^e$ with $p$ prime,
Ball~\cite{Ball12} proved the conjecture for $k\le p$, thereby
settling it over prime fields. Ball and De Beule~\cite{BDB12}
extended the range to $k\le2p-2$ over proper extension fields.
Chowdhury~\cite{Chow16} recast these arguments in terms of
determinant-weighted inclusion matrices and related their ranks to
the extendibility of arcs. These characteristic-dependent ranges
explain an important distinction between the two parities: when
$p=2$, the bound $k\le2p-2$ reaches only dimension two.
In recent preprints, Deng, Feng and Vasil'ev~\cite{DFV26} prove the
MDS bound for $4\le k\le q-3$ under a two-transitivity hypothesis
on the permutation automorphism group, while Fan~\cite{Fan26}
reports a range of dimensions linear in $q$ for each fixed odd
characteristic, using determinant relations and inclusion matrices.

In this paper we prove the linear MDS conjecture in even
characteristic, for every field order and every nontrivial dimension.

\begin{theorem}\label{thm:even}
Let $q$ be an even prime power, and let $\cC$ be a nontrivial
linear $[n,k]$ MDS code over $\F_q$. Then
\[
 n\le
 \begin{cases}
 q+2,& k\in\{3,q-1\},\\
 q+1,& k\notin\{3,q-1\}.
 \end{cases}
\]
\end{theorem}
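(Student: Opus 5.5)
The plan has three stages: reduce to a single extremal configuration, encode that configuration in a tangent polynomial, and then derive a contradiction from an algebraic rigidity argument.

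\emph{Reduction.} By duality, an $[n,k]$ MDS code exists if and only if an $[n,n-k]$ MDS code exists. Puncturing or shortening an MDS code gives a shorter MDS code. So it suffices to show that no arc of size $q+2$ exists in $\operatorname{PG}(k-1,q)$ for $4\le k\le q-2$. For $k=3$ and $k=q-1$ we must also rule out $q+3$ points, and this is classical: every $(q+2)$-arc in the plane with $q$ even is a hyperoval and cannot be extended. Furthermore, shortening a $(q+2)$-arc in $\operatorname{PG}(k-1,q)$ (projecting from one point) gives a $(q+1)$-arc in $\operatorname{PG}(k-2,q)$, and puncturing keeps the dimension. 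Iterating these moves, together with duality, we may assume $k$ is the smallest dimension with $4\le k\le q/2+1$ that admits a $(q+2)$-arc. The small cases $k=4,5,6,7$ (Casse, Casse--Glynn, Maruta--Kaneta) and $k<\sqrt q/2+15/4$ (Storme--Thas) can be quoted, so the real work is for larger $k$.

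\emph{Algebraic encoding.} Let $S$ be a $(q+2)$-arc in $\operatorname{PG}(k-1,q)$. Take a $(k-2)$-subset $A\subset S$. Every hyperplane through $\langle A\rangle$ meets $S\setminus A$ in at most one point, and there are $q+1$ such hyperplanes but $q+4-k$ remaining points. Hence exactly $k-3$ hyperplanes through $\langle A\rangle$ are tangent (they meet $S$ only in $A$). Following Bruen--Thas--Blokhuis and the tangent-hypersurface construction in Ball--Lavrauw, we form the polynomial $f_A$ that is the product of the linear forms of these tangents, a form of degree $k-3$ in the dual variables.

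Segre's lemma of tangents, in its scaled characteristic-two form, says that for $A$ of the form $C\cup\{x\}$ the values $f_{C\cup\{x\}}$ satisfy an interpolation identity. These identities glue the $f_A$ into a single form $G$ on the dual space. $G$ vanishes to prescribed order along the tangent subspaces, and its restrictions recover each $f_A$. I then pass to the Artinian Gorenstein algebra $R/\Ann(G)$, using Macaulay inverse systems in divided powers because $p=2$. The socle degree is about $k-3$, and the arc condition forces its Hilbert function to be that of $k$ points in general position, i.e.\ the symmetric function $h=(1,k,k,\dots,k,1)$ or a close analogue.

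\emph{Contradiction.} The plan is to show that the ideal of the $q+2$ points must then contain a rational normal curve's worth of quadrics. From this, $S$ would lie on a normal rational curve, which has only $q+1$ points, a contradiction. For $k=3$ the same argument degenerates exactly at the nucleus, which explains the exception.

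\emph{Main obstacle.} The step I expect to be hardest is proving that $G$ is well defined and has the stated apolar Hilbert function for all $k$ up to $q/2$. In odd characteristic the analogous argument breaks down at $k\sim p$ because of binomial coefficients vanishing mod $p$. Here the divided-power formalism has to absorb these, so that the Gorenstein symmetry can be used in place of the failing Frobenius-sensitive identities. My first attempt would be to verify the symmetry $h_i=h_{d-i}$ directly, by comparing the two tangent counts obtained from complementary subsets of $S$.
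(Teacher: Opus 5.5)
\textbf{There is a genuine gap.} Your opening is sound and matches the paper's starting point: the reduction by duality and puncturing to excluding $(q+2)$-arcs in $\operatorname{PG}(k-1,q)$ for $4\le k\le q/2+1$, the count of exactly $k-3$ tangent hyperplanes through each $(k-2)$-subset, and the envelope polynomial $G$. Gluing the $f_A$ into one form is not the hard step you expect: it is Theorem~13.1 of Ball--Lavrauw, which the paper simply quotes as Lemma~\ref{lem:input}. Your ``smallest $k$'' device is harmless but gives nothing, since projecting a $(q+2)$-arc from one of its points only yields a $(q+1)$-arc.

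Everything after that is a hope rather than an argument.
\begin{itemize}
\item The Hilbert function claim is unsupported. The claim that $R/\Ann(G)$ has Hilbert function $(1,k,k,\dots,k,1)$ already fails for $k=4$. There $G$ has degree $k-3=1$, so its apolar algebra has Hilbert function $(1,1)$. The symmetry $h_i=h_{s-i}$ holds for every apolar algebra, so ``verifying the symmetry'' carries no information. The content would lie in the actual values, and nothing in your outline determines them. The comparison with $k$ general points does not help either, since $(1,k,k,\dots)$ is not symmetric.
\item There is no link to the arc. $\Ann(G)$ lives in the coordinate ring of the dual space, and you give no mechanism that turns information about it into quadrics vanishing on the points of $S$.
\item The size $q+2$ is never used. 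Any conclusion of the form ``$S$ lies on a normal rational curve'' must use $q+2$ essentially, because $(q+1)$-arcs in characteristic two need not lie on one. Examples are the non-twisted-cubic $(q+1)$-arcs in $\operatorname{PG}(3,q)$ and non-regular hyperovals minus a point. Your sketch never says where $q+2$ rather than $q+1$ enters. ``Degenerates at the nucleus'' also does not account for non-regular hyperovals.
\end{itemize}

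\textbf{The missing idea is a concrete numerical contradiction.} The paper obtains one by using $\Phi$ to manufacture many dual codewords.
\begin{itemize}
\item A differential identity places the restricted generator rows in an extended Reed--Solomon code $E_{\cH}$. Its dual gives, for every $(k-2)$-set $\Lambda$, a subcode $\cD_\Lambda\subseteq\cC^\perp$ of dimension $m-h$, with $m=q/2$ and $h=k-3$. This subcode consists of the vectors $\bigl(H_\Lambda(\bp_i)f(\bp_i)\bigr)^2$.
\item Fix an $h$-set $\Lambda_0$. All the $\cD_{\Lambda_0\cup\{a\}}$ lie in the shortened dual code, which has dimension $q-2h-1$. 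Yet their sum has dimension at least $q-2h$.
\item That lower bound comes from projecting from $\Lambda_0$ to a plane. The common annihilator then has dimension at most $3+\kappa$, where $\kappa$ is the degree-$(h-2)$ part of the colon ideal $\bigl((X_1^m,X_2^m,X_3^m):J\bigr)$.
\item The Frobenius derivation $\delta=\sum_i X_i^q\,\partial/\partial X_i$ gives injectivity in degrees $\le h-3$. Unimodality of Gorenstein Hilbert functions in embedding dimension three then gives $\kappa\le h-1$.
\end{itemize}
The reduction to three variables is essential. Your $R/\Ann(G)$ has embedding dimension up to $k$, where Gorenstein Hilbert functions need not even be unimodal. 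So no general structure theorem would close the step you left open.
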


For $q\ge4$, extended Reed--Solomon codes, hyperoval codes and their
duals attain the bounds in every admissible dimension
$2\le k\le q-1$. Thus Theorem~\ref{thm:even} determines the exact
maximum length throughout characteristic two, without a restriction
relating the dimension to the field order. Geometrically, it settles
the maximum-size part of Segre's problem in this characteristic;
the classification of arcs attaining the maximum is a separate
question.

The proof uses the tangent-envelope polynomial in the form developed
by Ball and Lavrauw~\cite{BL19}. A differential identity and
characteristic-two interpolation produce canonical local subcodes
of the dual code from extended Reed--Solomon spaces. We estimate
the dimension of their sum by projecting the arc to a plane and
expressing the common annihilator through a colon ideal in three
variables. A Frobenius derivation gives the necessary injectivity
below the critical degree. The remaining degree is controlled by
the unimodality theorem for Artinian Gorenstein Hilbert functions
in embedding dimension at most three
\cite{Stanley78,zanello}. The resulting span is too large to lie
in the common shortened dual code. This dimension comparison
excludes length $q+2$ in the nonexceptional dimensions; duality
and puncturing complete the argument.

As this manuscript was being completed, we became aware of an independent report by Kumar~\cite{Kumar26} that also claims to
establish Theorem~\ref{thm:even}. 
His argument uses a common space
of lifts of a planar arc, whereas we construct local subcodes of the
dual code by Reed--Solomon interpolation. The arguments share a
central algebraic mechanism: both use a Frobenius obstruction to
reduce the dimension bound to a ternary colon ideal, and combine
low-degree injectivity with Gorenstein Hilbert-function unimodality.
Kumar proves the required unimodality through a rank-sequence
argument; we apply the codimension-three Gorenstein theorem directly.
The local constructions and their relation to the original code
provide the principal difference between the two presentations.

Section~2 collects the geometric, algebraic and coding-theoretic
preliminaries. Section~3 identifies the required interpolation space
with an extended Reed--Solomon code and describes its dual.
Section~4 constructs the local dual subcodes and reduces the
dimension estimate to a ternary colon ideal. Section~5 proves
the estimate and completes the proof of Theorem~\ref{thm:even}.

\section{Preliminaries}
In this section, we establish the basic notation for the paper and recall  foundational results from
 finite geometry,  commutative algebra and algebraic coding theory that underpin our argument.
Let $q=2^e$ with $e\geq3$, let $k \ge 3$ be an integer, let $V = \F_q^k$ be a $k$-dimensional vector space over $\F_q$, and let $V^*$ be its dual space. A collection of vectors $\cA = \{\bv_1, \ldots, \bv_n\} \subset V \setminus \{\mathbf{0}\}$ indexed by $\cI = \{1, \ldots, n\}$ is called a \emph{projective arc} (or an \emph{arc} in $\PP(V)$) if any $k$ vectors in $\cA$ are linearly independent over $\F_q$ (assuming $n \ge k$). Equivalently, the $\bv_i$ form the columns of a generator matrix of a linear $[n,k]$ MDS code $\cC$ over $\F_q$.

For any index subset $\Lambda \subset \cI$ with $|\Lambda| = k-2$, define the $(k-2)$-dimensional subspace and the corresponding $2$-dimensional quotient space:
\[
 U_\Lambda = \Span_{\F_q}\{\bv_i : i \in \Lambda\}, \qquad \Vbar{\Lambda} = V / U_\Lambda,
\]
together with the $2$-dimensional annihilator subspace in $V^*$:
\[
 \Ann_{V^*}(U_\Lambda) = \{\phi \in V^* : \phi(\bv_i) = 0 \text{ for every } i \in \Lambda\}.
\]
Let $\Lambda^c = \cI \setminus \Lambda$, so that $|\Lambda^c| = n - k + 2$. Define the set of missing dual points on the projective line $\PP(\Ann_{V^*}(U_\Lambda))$ as
\[
 M_\Lambda = \{ [\phi] \in \PP(\Ann_{V^*}(U_\Lambda)) : \phi(\bv_i) \ne 0 \text{ for every } i \in \Lambda^c \}.
\]
By the arc property, for each $i \in \Lambda^c$, the evaluation form $\phi \mapsto \phi(\bv_i)$ on $\Ann_{V^*}(U_\Lambda)$ is nonzero with $1$-dimensional kernel $\Ann_{V^*}(U_{\Lambda \cup \{i\}})$, and these kernels determine $n - k + 2$ distinct points on the projective line $\PP(\Ann_{V^*}(U_\Lambda))$. Since $\PP(\Ann_{V^*}(U_\Lambda))$ has $q + 1$ rational points, the complement $M_\Lambda$ consists of precisely $h = (q + 1) - (n - k + 2) = q + k - 1 - n$ points. Assuming $h \ge 1$, write $M_\Lambda = \{[\phi_{\Lambda,1}], \ldots, [\phi_{\Lambda,h}]\}$ for some fixed nonzero representatives $\phi_{\Lambda,j} \in \Ann_{V^*}(U_\Lambda)$. For each $1 \le j \le h$, let $M_{\Lambda,j} : \Ann_{V^*}(U_\Lambda) \to \F_q$ be a representative nonzero linear form whose kernel is the line $\Span_{\F_q}\{\phi_{\Lambda,j}\}$.
By \cite[Theorem 13.1]{BL19} of Ball and Lavrauw, specialized to the
even-characteristic setting, we have the following result:
\begin{lemma}\label{lem:input}
Let $q$ be an even prime power, let $k \ge 3$ and $h \ge 1$ be integers, and let $\cA = \{\bv_1, \ldots, \bv_n\} \subset V \setminus \{\mathbf{0}\}$ be a projective arc in $V = \F_q^k$ indexed by $\cI = \{1, \ldots, n\}$ of size $n = q + k - 1 - h$ with $n \ge h + k - 1$. Then there exists a global homogeneous polynomial $\Phi \in \F_q[X_1, \ldots, X_k]_h$ of degree $h$ in the coordinates of $V^*$ such that for every subset $\Lambda \subset \cI$ of size $k-2$, its restriction $\Phi|_{\Ann_{V^*}(U_\Lambda)}$ satisfies:
\begin{enumerate}
    \item $\Phi|_{\Ann_{V^*}(U_\Lambda)}$ is a nonzero homogeneous binary form of degree $h$ on $\Ann_{V^*}(U_\Lambda)$.
    \item All projective zeros of $\Phi|_{\Ann_{V^*}(U_\Lambda)}$ in $\PP(\Ann_{V^*}(U_\Lambda))$ are simple, and a point $[\phi] \in \PP(\Ann_{V^*}(U_\Lambda))$ satisfies $\Phi(\phi) = 0$ if and only if $\phi(\bv_i) \ne 0$ for all $i \in \Lambda^c$ (that is, $[\phi] \in M_\Lambda$).
    \item There exists a nonzero scalar $c_\Lambda \in \F_q^*$ such that for all $\phi \in \Ann_{V^*}(U_\Lambda)$,
    \[
        \Phi|_{\Ann_{V^*}(U_\Lambda)}(\phi) = c_\Lambda \prod_{j=1}^h M_{\Lambda,j}(\phi).
    \]
\end{enumerate}
\end{lemma}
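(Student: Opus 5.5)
This lemma is the even-characteristic specialization of Segre's lemma of tangents in Ball--Lavrauw's scaled, higher-dimensional form, so I would reconstruct the tangent-hypersurface construction of \cite[Theorem 13.1]{BL19} rather than give a new argument. First, for each $\Lambda$ with $|\Lambda|=k-2$, define the local polynomial
\[
 f_\Lambda(\phi)=\prod_{j=1}^h M_{\Lambda,j}(\phi)
\]
on the plane $\Ann_{V^*}(U_\Lambda)$. By construction it is a binary form of degree $h$ whose zeros are exactly the $h$ points of $M_\Lambda$, each simple. This already gives items 1--3 locally; the whole content is choosing scalars $c_\Lambda$ so that the forms $c_\Lambda f_\Lambda$ glue to one global $\Phi$ of degree $h$ on $V^*$.

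Second, I would prove the even-characteristic lemma of tangents. Fix a $(k-3)$-subset $\Gamma$ and three further indices $a,b,c$, and let $\Lambda_1=\Gamma\cup\{a\}$, $\Lambda_2=\Gamma\cup\{b\}$, $\Lambda_3=\Gamma\cup\{c\}$. Work in a basis adapted to $\bv_a,\bv_b,\bv_c$ together with the vectors indexed by $\Gamma$. In $\F_q^*$ the product of all nonzero elements equals $1$ (it is $-1=1$ in characteristic two). Applied to the nonzero values on each line, this yields
\[
 f_{\Lambda_1}(\alpha_1)f_{\Lambda_2}(\alpha_2)f_{\Lambda_3}(\alpha_3)=f_{\Lambda_1}(\beta_1)f_{\Lambda_2}(\beta_2)f_{\Lambda_3}(\beta_3),
\]
where $\alpha_i,\beta_i$ are suitable normalized evaluation points. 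Equivalently, the ratios $f_\Lambda$ evaluated at the dual points $\Ann(U_{\Lambda\cup\{i\}})$ satisfy a cocycle condition. The main point is Segre's argument: projecting from $U_\Gamma$ reduces this to a planar arc, where it is the classical lemma of tangents with sign $(-1)^{t+1}=1$.

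Third, I would interpolate. Choose a fixed set of $k-1$ indices, say $\{1,\dots,k-1\}$, and define $\Phi$ as the unique degree-$h$ form that interpolates the scaled local forms $c_\Lambda f_\Lambda$ on the planes $\Ann(U_\Lambda)$ for the $\Lambda\subset\{1,\dots,k+h-1\}$, using the coordinates dual to a basis of arc points. The hypothesis $n\ge h+k-1$ guarantees there are enough arc points to make this interpolation determined. One then shows, by the coordinate-free interpolation formula of Ball--Lavrauw, that for every other $\Lambda$ the restriction of $\Phi$ agrees with a scalar multiple of $f_\Lambda$. Concretely, the two binary forms must agree at $h+1$ points of the line, namely points of the form $\Ann(U_{\Lambda\cup\{i\}})$, with the cocycle condition propagating the agreement. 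Two binary forms of degree $h$ that agree at $h+1$ points coincide, so the restriction equals $c_\Lambda f_\Lambda$ with $c_\Lambda\ne0$. This gives item 3, and items 1 and 2 follow from the local description.

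The main obstacle is this gluing and consistency step: checking that the scaled local forms are compatible across all overlapping $\Lambda$. The cocycle identity must propagate from the interpolation basis to arbitrary $(k-2)$-subsets, which in the reference is done by induction on the number of indices outside the chosen basis. Since the paper simply cites \cite[Theorem 13.1]{BL19}, I would follow that induction verbatim, noting only that in characteristic two every sign $(-1)^{\cdot}$ in it becomes $1$.
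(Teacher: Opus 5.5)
Your proposal is correct and follows the same route as the paper: both rest entirely on Ball--Lavrauw's Theorem~13.1, taken with multiplicity $m=1$ so that every sign is trivial in characteristic two, and with $n\ge h+k-1$ supplying the $k+h-1$ arc points needed for the coordinate-free interpolation. The difference is only in presentation. The paper quotes the theorem as the identity $\Phi(\mathcal{C}_\Lambda(\mathbf{x}))=f_\Lambda(\mathbf{x})$ on $V$ and transports it to $\Ann_{V^*}(U_\Lambda)$ through the cofactor isomorphism $V/U_\Lambda\cong\Ann_{V^*}(U_\Lambda)$; you instead work directly with $\prod_j M_{\Lambda,j}$ on the dual planes and sketch the lemma-of-tangents, interpolation and induction argument that lies inside the cited theorem.
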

\begin{proof}
In \cite[Theorem 13.1, pp.~155--157]{BL19} ,
 the multiplicity parameter $m \in \{1,2\}$ satisfies $m - 1 \equiv q \pmod 2$, so $m = 1$ when $q$ is even, and the number of tangents through a $(k-2)$-subset is $t = q + k - 1 - n = h$. Under the size hypothesis $n \ge h + k - 1$, Theorem~13.1 provides a homogeneous polynomial $\Phi$ of degree $h$ on $V^*$ after the cofactor substitution $Z_j = \det_j(X_1, \ldots, X_{k-1})$.

Specifically, fix a nonzero alternating $k$-linear form $\omega$ on $V$ (such as the determinant in the standard basis) and order $\Lambda = \{i_1, \ldots, i_{k-2}\}$. The cofactor map $V \to V^*$ defined by $\mathbf{x} \mapsto \omega(\bv_{i_1}, \ldots, \bv_{i_{k-2}}, \mathbf{x}, \cdot)$ has kernel $U_\Lambda$ and image $\Ann_{V^*}(U_\Lambda)$, inducing a linear isomorphism $\mathcal{C}_\Lambda : \Vbar{\Lambda} = V/U_\Lambda \xrightarrow{\sim} \Ann_{V^*}(U_\Lambda)$. It carries each missing quotient direction $[\mathbf{w}] \in \PP(\Vbar{\Lambda})$ to the dual point $[\phi] \in M_\Lambda$ of the corresponding tangent hyperplane $\PP(U_\Lambda + \Span_{\F_q}\{\mathbf{w}\})$. In Theorem~13.1, the polynomial identity $\Phi(\mathcal{C}_\Lambda(\mathbf{x})) = f_\Lambda(\mathbf{x})$ expresses the pullback of $\Phi|_{\Ann_{V^*}(U_\Lambda)}$ as the product $f_\Lambda$ of the $h$ distinct linear forms cutting out the tangent hyperplanes at $\Lambda$. Pushing forward along $\mathcal{C}_\Lambda$ yields the exact restriction assertions (1)--(3) on $\Ann_{V^*}(U_\Lambda)$.
\end{proof}

We recall the algebraic notions.
Let $\mathbb K$ be a field and let
$R=\mathbb K[X_1,\ldots,X_d]$, graded by total degree.
Write $R_r$ for the space of homogeneous polynomials of degree $r$,
with $R_r=\{0\}$ for $r<0$.
An ideal $\mathfrak a\subseteq R$ is \emph{homogeneous} if
\[
 \mathfrak a=\bigoplus_{r\ge0}\mathfrak a_r,
 \qquad
 \mathfrak a_r=\mathfrak a\cap R_r.
\]
For a homogeneous polynomial $P\in R$, the \emph{colon ideal}
$
 (\mathfrak a:P)=\{u\in R:Pu\in\mathfrak a\}
$
is also homogeneous.
A \emph{standard graded} $\mathbb K$-algebra is a finitely generated
commutative graded algebra $\cG=\bigoplus_{r\ge0}\cG_r$ with
$\cG_0=\mathbb K$ that is generated by $\cG_1$.
Equivalently, it is a quotient $R/\mathfrak a$ by a proper
homogeneous ideal, with
$
 \cG_r=R_r/\mathfrak a_r.$
Its \emph{embedding dimension} is $\dim_{\mathbb K}\cG_1$.
In this setting, $\cG$ is \emph{Artinian} if and only if it is
finite-dimensional over $\mathbb K$.
For such an Artinian algebra, its \emph{socle degree} is
$
 s=\max\{r:\cG_r\ne0\}.
$
Its homogeneous maximal ideal and its \emph{socle} are, respectively,
\[
 \mathfrak m_{\cG}=\bigoplus_{r\ge1}\cG_r,
 \qquad
 \operatorname{Soc}(\cG)
 =\{u\in\cG:\mathfrak m_{\cG}u=0\}.
\]
The socle is a graded ideal containing $\cG_s$.
The algebra $\cG$ is \emph{Gorenstein} if
$\dim_{\mathbb K}\operatorname{Soc}(\cG)=1$; in that case,
$\operatorname{Soc}(\cG)=\cG_s$.
A bilinear pairing
\[
 \mathcal U\times\mathcal W\longrightarrow\mathbb K
\]
between finite-dimensional $\mathbb K$-vector spaces is called
\emph{perfect} if every nonzero element of either space pairs
nontrivially with some element of the other.
Thus a perfect pairing is nondegenerate on both sides and satisfies
$\dim_{\mathbb K}\mathcal U=\dim_{\mathbb K}\mathcal W$.
Our critical-kernel estimate requires the following inequality
established by Zanello \cite[Theorem 4]{zanello}. 

\begin{lemma}\label{lem:unimodal}
Let $\cG$ be a standard graded Artinian Gorenstein algebra over a
field $\mathbb K$, with embedding dimension at most three and
socle degree $s$. Then
$
 \dim_{\mathbb K}\cG_r\ge\dim_{\mathbb K}\cG_{r-1}
$
for every integer $1\le r\le\lfloor s/2\rfloor.$
\end{lemma}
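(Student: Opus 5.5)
We plan to follow Zanello's elementary route, which uses only Macaulay duality and Green's hyperplane restriction theorem and avoids the Buchsbaum--Eisenbud structure theorem. First extend scalars to an infinite field. The Hilbert function, the Gorenstein property (a one-dimensional socle) and the embedding dimension are unchanged under $\mathbb K\subseteq\overline{\mathbb K}$, so we may assume $\mathbb K$ is infinite. If the embedding dimension is at most two, the Hilbert function of any standard graded quotient of $\mathbb K[X,Y]$ equals $t+1$ up to its first drop and is nonincreasing afterwards. Symmetry then forces unimodality, so the essential case is embedding dimension three, with $\cG=\mathbb K[X_1,X_2,X_3]/\mathfrak a$. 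Write $h_t=\dim_{\mathbb K}\cG_t$. Gorenstein duality gives a perfect pairing $\cG_t\times\cG_{s-t}\to\cG_s\cong\mathbb K$, so $h_t=h_{s-t}$.

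Next choose a general linear form $\ell\in\cG_1$ and put $B=\cG/\ell\cG$, with $b_t=\dim_{\mathbb K}B_t$. For each $r$ there is an exact sequence
\[
 0\to(0:_{\cG}\ell)_{r-1}\to\cG_{r-1}\xrightarrow{\ \cdot\ell\ }\cG_r\to B_r\to0 .
\]
Under the perfect pairing, $(0:\ell)_{r-1}$ is the orthogonal complement of $\ell\cG_{s-r}$ inside $\cG_{r-1}$. Hence $\dim(0:\ell)_{r-1}=h_{s-r+1}-\dim\ell\cG_{s-r}=b_{s-r+1}$. Combining this with the exact sequence gives the key identity
\[
 h_r-h_{r-1}=b_r-b_{s-r+1}.
\]
The statement therefore reduces to showing $b_{s-r+1}\le b_r$ for $1\le r\le\lfloor s/2\rfloor$, where $s-r+1>r$. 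Here $B$ is a standard graded quotient of the two-variable ring $\mathbb K[X_1,X_2,X_3]/(\ell)$. Consequently $b_t=t+1$ until the first degree $d$ in which $B$'s defining ideal is nonzero, and $b$ is nonincreasing from degree $d-1$ on. If $b_r\le r$, then $r\ge d$ and we get $b_{s-r+1}\le b_r$ immediately.

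The main obstacle is the remaining case $b_r=r+1$, that is, $\ell\cG_{r-1}\cap$ (the relevant part) is as small as possible and $B$ agrees with the polynomial ring through degree $r$. Here the two-variable monotonicity alone does not bound $b_{s-r+1}$ by $r+1$. We plan to handle it with Green's hyperplane restriction theorem, $b_t\le(h_t)_{\langle t\rangle}$, together with Macaulay's bound $h_{t+1}\le h_t^{\langle t\rangle}$, applied in the degrees $t=s-r+1$ and $r$. Symmetry gives $h_{s-r+1}=h_{r-1}\le\binom{r+1}{2}$. If the first drop $h_r<h_{r-1}$ occurred at such an $r$, then $h_r<\binom{r+2}{2}$, and the $t$-binomial expansion of a value below $\binom{t+2}{2}$ has a small enough tail to push $(h_{s-r+1})_{\langle s-r+1\rangle}$ below $r+1$. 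We would then argue by contradiction: take $r$ minimal with $h_r<h_{r-1}$, use Macaulay's theorem to propagate the decrease, obtaining $h_t\le h_r$ for all $t\ge r$ once $h_r$ falls into the range where $h^{\langle t\rangle}=h$, and contradict $h_{s-r+1}=h_{r-1}>h_r$. The delicate point is checking that $h_r$ really lies in the range where Macaulay growth is trivial; that is exactly where embedding dimension three, and not four, is used.
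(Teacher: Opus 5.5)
The paper gives no proof of this lemma. It imports it from Zanello's Theorem~4, an elementary proof of Stanley's codimension-three theorem via Macaulay's and Green's theorems, so your route matches the cited source. Most of your reduction is correct: the base change, the symmetry $h_t=h_{s-t}$, the identity $h_r-h_{r-1}=b_r-b_{s-r+1}$ (valid for every $\ell$; genericity matters only for Green), and the two-variable shape of $b$, which settles the case $b_r\le r$.

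The step that would fail is the closing argument you single out as the crux. You want to propagate a first drop $h_r<h_{r-1}$ using Macaulay's bound. That requires $h_r$ to lie where $h^{\langle t\rangle}=h$, which means $h_r\le r$, and nothing forces this. For example, $(1,3,6,10,9,11)$ is an O-sequence, because $9=\binom{5}{4}+\binom{4}{3}$ and $9^{\langle 4\rangle}=\binom{6}{5}+\binom{5}{4}=11$. So Macaulay's theorem permits a drop at $r=4$ with $h_4=9>4$, followed by regrowth. Only the Gorenstein structure, entering through your identity and Green's theorem, can exclude this. Hence the \emph{delicate point} you describe cannot be verified as stated.

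You do not need it: your preceding sentence already closes the case $b_r=r+1$ once the estimate is made precise. For $h<\binom{t+2}{2}$, the $t$-binomial expansion is $h=\sum_{i=0}^{p-1}\binom{t+1-i}{t-i}$ plus at most $t-p$ terms of the form $\binom{j}{j}$. Hence $h_{\langle t\rangle}=p$, the largest integer with $\sum_{i=0}^{p-1}(t+1-i)\le h$.

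Now take $t=s-r+1\ge r+1$ and $h=h_{s-r+1}=h_{r-1}\le\binom{r+1}{2}$. Since $\sum_{i=0}^{r-1}(t+1-i)\ge r(r+2)-\binom{r}{2}=r(r+5)/2>\binom{r+1}{2}$, we get $p\le r-1$. Green then gives $b_{s-r+1}\le r-1<r+1=b_r$.

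Thus $b_{s-r+1}\le b_r$ in both cases, and $h_r\ge h_{r-1}$ follows directly from your identity. No contradiction argument and no Macaulay bound on $h$ are needed. Embedding dimension three enters only because $B$ is a quotient of a two-variable ring, so $b_r>r$ forces $b_r=r+1$, and because the Green bound in degree $s-r+1$ is then this small.
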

 
Recall that a linear $[n,k]$ MDS code is trivial if its dimension satisfies $k \in \{0, 1, n-1, n\}$. The basic parameter bounds and 
coordinate restriction properties for nontrivial MDS codes are established in \cite[Chapter 7]{HP03}.
\begin{lemma}\label{lem:mds-basic}
Let $\cC$ be a nontrivial linear $[n,k]$ MDS code over $\F_q$.
Then $\cC^\perp$ is MDS and
\[
 2\le k\le\min\{n-2,q-1\},\qquad n\le q+k-1.
\]
For any linear MDS code of dimension $d$, restriction to any
$r\le d$ coordinates has rank $r$; the subcode that is zero on
these coordinates therefore has dimension $d-r$.
\end{lemma}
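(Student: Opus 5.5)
The plan is to prove the three assertions of Lemma~\ref{lem:mds-basic} separately, each by a standard linear-algebra argument with $G$ a $k\times n$ generator matrix of $\cC$ whose columns $\bv_1,\ldots,\bv_n$ form an arc.

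\emph{Duality.} For any set $S$ of $k$ coordinates the columns of $G$ indexed by $S$ are independent, so the restriction map $\cC\to\F_q^S$ is an isomorphism. If $\cC^\perp$ contained a nonzero word of weight at most $k$, supported in some $S$ with $|S|=k$, then its entries on $S$ would give a nontrivial dependence among $k$ columns of $G$, which is impossible. Hence $d(\cC^\perp)\ge k+1=n-(n-k)+1$, and the Singleton bound forces equality, so $\cC^\perp$ is MDS.

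\emph{Restriction.} For the statement about $r\le d$ coordinates of a $d$-dimensional MDS code, I would extend the $r$ coordinates to a $d$-subset $S$. Restriction to $S$ is injective and hence bijective onto $\F_q^S$. Projecting further onto the chosen $r$ coordinates is surjective, so the rank is $r$, and rank--nullity gives a subcode of dimension $d-r$ vanishing there.

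\emph{Bounds.} Nontriviality gives $2\le k\le n-2$ directly. For $n\le q+k-1$, apply the restriction statement with $r=k-1$ to the first $k-1$ coordinates. The subcode vanishing there has dimension $1$; take a nonzero codeword $c$ in it. Since $d=n-k+1$ and $c$ already has $k-1$ zeros, $c$ is nonzero on all remaining $n-k+1$ coordinates. Now take a second coordinate subset $T$ of size $k-2$ together with two further indices $i,j$. The subcode vanishing on $T$ has dimension $2$. Each of its nonzero words has at most $k-1$ zeros, hence at most one zero outside $T$.

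So the $n-k+2$ coordinates outside $T$ define pairwise non-proportional linear functionals on this $2$-dimensional space. Indeed, proportional functionals would give a word vanishing at two coordinates outside $T$. Their kernels are therefore distinct points of $\PP^1(\F_q)$, which has $q+1$ points. This gives $n-k+2\le q+1$, i.e.\ $n\le q+k-1$; it is exactly the count already used in defining $M_\Lambda$.

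\emph{Bound $k\le q-1$.} Applying the previous bound to $\cC^\perp$, which has dimension $n-k\ge 2$, gives $n\le q+(n-k)-1$, i.e.\ $k\le q-1$.

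The main (minor) obstacle is this last deduction, which requires the duality part first and the nontriviality of $\cC^\perp$. The latter holds because $2\le n-k\le n-2$. Everything else is routine.
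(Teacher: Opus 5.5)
Your proof is correct. The paper gives no argument for this lemma; it simply cites \cite[Chapter~7]{HP03}. The standard textbook derivation there gets duality and the information-set property from the usual equivalent characterizations of MDS codes. It obtains the numerical bounds from the MDS weight distribution: nonnegativity of $A_{d+1}=\binom{n}{d+1}(q-1)(q-d)$ with $d=n-k+1$ gives $n\le q+k-1$ when $k\ge2$, and duality then gives $k\le q-1$.

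You replace the weight enumerator by a direct count. The $n-k+2$ coordinate functionals on the $2$-dimensional subcode vanishing on a $(k-2)$-set are pairwise non-proportional, so they give distinct points of $\PP^1(\F_q)$. This route is elementary and self-contained. It is also exactly the count the paper itself uses when it defines $M_\Lambda$ and obtains $h=q+k-1-n\ge 0$, so it matches the paper's geometric setup. The weight-distribution route yields more information (the whole weight enumerator) but needs more machinery.

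Two small tidy-ups:
\begin{itemize}
\item The preliminary step that produces a codeword $c$ vanishing on $k-1$ coordinates is never used and can be deleted.
\item You should state explicitly that each coordinate functional is nonzero on the $2$-dimensional subcode. Your restriction statement with $r=k-1$ gives this, since the subcode vanishing on $T\cup\{i\}$ has dimension $1$. Otherwise the ``kernel'' of such a functional need not be a point of $\PP^1(\F_q)$. Your non-proportionality argument does cover this case if you allow the scalar $0$, but it is cleaner to say it outright.
\end{itemize}
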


\section{An extended Reed--Solomon code and its dual}
This section introduces a specialized linear space $E_{\cH}$, 
characterizes it as an extended Reed--Solomon code, and explicitly 
constructs a basis for its dual space.
Let $q=2^e$ with $e\ge 3$, $m=q/2$, and assume $1\le h\le m-1$.
Let $\cH \subset \F_q$ be a subset of size $|\cH| = h$, and let $\cH^c = \F_q \setminus \cH$ be its complement, of size $|\cH^c| = q - h = 2m - h$.
Define the corresponding monic vanishing polynomials:
\[
 P_{\cH}(X) = \prod_{\tau \in \cH} (X - \tau), \qquad P_{\cH^c}(X) = \prod_{s \in \cH^c} (X - s),
\]
so that $P_{\cH}(X) P_{\cH^c}(X) = X^q - X$.
For any data vector $\bz = \bigl((z_s)_{s \in \cH^c} \,;\, z_\infty\bigr)$ with $z_s, z_\infty \in \F_q$, define the associated interpolation polynomial
\[
 F(X) = \sum_{s \in \cH^c} z_s \frac{X^q - X}{X - s} = P_{\cH}(X) \sum_{s \in \cH^c} z_s \frac{P_{\cH^c}(X)}{X - s}.
\]
Then $\deg F \le q-1$, $F(\tau) = 0$ for all $\tau \in \cH$, and $F(s) = z_s$ for all $s \in \cH^c$, because $P_{\cH^c}'(s) = 1/P_{\cH}(s)$ in characteristic two.
We define $E_{\cH}$ as the $\F_q$-linear space of data vectors $\bz$ satisfying the first-order differential relation
$P_{\cH}(s)\bigl(F'(s) + z_\infty\bigr) = Q(s)$ on $\cH^c$ for some polynomial $Q\in\F_q[X]$ of degree strictly less than $h$; explicitly,
\begin{equation*}
 E_{\cH} = \left\{ \bz=\bigl((z_s)_{s \in \cH^c} \,;\, z_\infty\bigr) : \text{there is} ~ Q \in \F_q[X],\
 \deg Q < h,\ \frac{Q(s)}{P_{\cH}(s)} = F'(s) + z_\infty\ \text{for all} ~s \in \cH^c\right\}.
\end{equation*}
To characterize $E_{\cH}$ in closed form and identify its coding-theoretic structure, we recall the definition of an extended Reed--Solomon code.
Let $\Omega \subseteq \F_q$ be a subset of evaluation points, and let $1 \le r \le |\Omega| + 1$ be an integer.
The extended Reed--Solomon code of dimension $r$
evaluated over $\Omega \cup \{\infty\}$, denoted by $\cC_{\text{RS}}(\Omega, r)$, is the $\F_q$-linear space defined as:
\[
 \cC_{\text{RS}}(\Omega, r) =
 \left\{ \bigl( (f(x))_{x \in \Omega} \,;\, f_{r-1} \bigr): f(X) =
 \sum_{j=0}^{r-1} f_j X^j \in \F_q[X] \right\}.
\]

The following lemma shows that $E_{\cH}$ is parameterized by polynomials containing only even powers of $X$ together with a linear term
and coincides with an extended Reed--Solomon code over the squared evaluation set $(\cH^c)^2$.

\begin{lemma}\label{lem:space}
The linear space $E_{\cH}$ is explicitly given by
\[
 E_{\cH} = \left\{ \bigl( (E(s) + a s)_{s \in \cH^c} \,;\, a \bigr) \;\middle\vert{}\; E \in \Span_{\F_q}\{1, X^2, \ldots, X^{q-2}\},\ a \in \F_q \right\}.
\]
Consequently, $\dim_{\F_q} E_{\cH} = m+1$. Moreover, $E_{\cH}=\cC_{\text{RS}}((\cH^c)^2, m+1)$, where $(\cH^c)^2 = \{s^2 \mid s \in \cH^c\}$.
\end{lemma}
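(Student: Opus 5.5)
The plan is to turn the defining condition of $E_{\cH}$ into a polynomial identity of degree less than $q$. We then use that in characteristic two the derivative kills even powers and that the kernel of $f\mapsto f'$ on polynomials of degree $<q$ is exactly $\Span\{1,X^2,\ldots,X^{q-2}\}$.

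\textbf{Step 1: rewrite the condition.} Since $F(s)=z_s$ on $\cH^c$ and $F(\tau)=0$ on $\cH$, the vector $\bz$ determines $F$ uniquely among polynomials of degree $\le q-1$ vanishing on $\cH$; these are the multiples $F=P_{\cH}G$ with $\deg G\le q-1-h$. Differentiating gives $F'=P_{\cH}'G+P_{\cH}G'$. The condition $P_{\cH}(s)(F'(s)+z_\infty)=Q(s)$ on $\cH^c$ is then equivalent to
\[
P_{\cH}\bigl(P_{\cH}'G+P_{\cH}G'+z_\infty\bigr)-Q \equiv 0 \pmod{P_{\cH^c}}.
\]
This holds automatically on $\cH$ after we also require it there. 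Each side vanishes at $\tau\in\cH$ exactly when $Q(\tau)=0$, and $\deg Q<h$ then forces $Q=0$. So the convenient route is to treat the congruence modulo $X^q-X$ by choosing $Q$ as the remainder.

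\textbf{Step 2: count the space.} For the inclusion ``$\supseteq$'', take $F(X)=E(X)+aX$ reduced on $\cH$. Concretely, take the data $z_s=E(s)+as$, and let $F$ be the interpolant vanishing on $\cH$, so that $F\equiv E+aX+R$ on $\F_q$, where $R$ is the interpolant of $-(E+aX)$ on $\cH$. Then $F=E+aX+P_{\cH^c}\cdot(\ast)$, and more usefully $F-(E+aX)$ vanishes on $\cH^c$, so $F-(E+aX)=P_{\cH^c}T$ with $\deg T<h$. Differentiating gives $F'=E'+a+P_{\cH^c}'T+P_{\cH^c}T'$. Now $E'=0$ because $E$ has only even powers, and at $s\in\cH^c$ we have $P_{\cH^c}'(s)=1/P_{\cH}(s)$. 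Hence $F'(s)+a=T(s)/P_{\cH}(s)$, so $Q=T$ works with $z_\infty=a$. This gives an injective linear map from the pairs $(E,a)$, a space of dimension $m+1$, into $E_{\cH}$. Injectivity holds because $q-h>m$ points determine $E+aX$, whose degree is at most $q-2$.

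For ``$\subseteq$'' I would run the same computation in reverse. Given $\bz\in E_{\cH}$, write $F'+z_\infty$ and use the decomposition of any polynomial of degree $<q$ as $E+aX+(\text{odd part of degree}\ge3)$. The condition that $P_{\cH}(F'+z_\infty)$ agree on $q-h$ points with a polynomial of degree $<h$ forces the odd part of $F$ beyond the linear term to come only from $P_{\cH^c}T$. This reverse direction is the main obstacle. The cleanest fallback is a dimension count: show $\dim E_{\cH}\le m+1$ directly. The map $\bz\mapsto(F'|_{\cH^c}+z_\infty)$ lands in a space controlled by $Q$ of dimension $h$. The derivative map on $\{P_{\cH}G\}$, which has dimension $q-h$, has kernel inside the even polynomials, and the image constraint cuts the dimension to $(q-h+1)-(q-h-m)=m+1$. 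I would make this precise by computing the rank of the linear conditions, using that $F'$ ranges over odd-degree-free polynomials of degree $\le q-2$.

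\textbf{Step 3: identify the Reed--Solomon code.} Write $E(X)=\sum_{j=0}^{m-1}e_jX^{2j}=g(X^2)$ with $\deg g\le m-1$. Since Frobenius is a bijection, $as=\sqrt{a}^{\,2}\cdot s=(\sqrt a\, s^{1/2})^2\ldots$ This needs care: I would instead write $E(s)+as=f(s^2)$ with $f(Y)=g(Y)+aY^{q/2}$. Indeed $s=(s^2)^{q/2}$ on $\F_q$, so $f$ has degree $m$ and leading coefficient $a$, which is the $\infty$-coordinate. As $(g,a)$ ranges over everything, $f$ ranges over all polynomials of degree $\le m$. This matches $\cC_{\text{RS}}((\cH^c)^2,m+1)$ exactly, since squaring is injective on $\cH^c$.
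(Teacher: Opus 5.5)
\textbf{Assessment: there is a genuine gap in the reverse inclusion.}

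\textbf{What is correct.} Your proof of $E_{\text{param}}\subseteq E_{\cH}$ is correct, and it is a little slicker than the paper's. You write $F-(E+aX)=P_{\cH^c}T$ with $\deg T\le h-1$, then use $E'=0$ and $P_{\cH^c}'(s)=1/P_{\cH}(s)$ to get $Q=T$ at once. The paper instead constructs $F=U+aX+\sum_{\tau}c_\tau(X-\tau)^{q-1}$ explicitly. Step~3 also matches the paper.

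\textbf{A minor slip.} Your stated reason for injectivity is wrong: a polynomial of degree up to $q-2$ is in general not determined by $q-h$ values. Use $a=z_\infty$ together with $E=K^2$, $\deg K\le m-1$. Alternatively, use your own representation $f(s^2)$ with $\deg f\le m$.

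\textbf{The gap.} The reverse inclusion $E_{\cH}\subseteq E_{\text{param}}$, equivalently $\dim E_{\cH}\le m+1$, is not proved. You flag this yourself as the main obstacle. The remark about the ``odd part beyond the linear term'' is not an argument. In the fallback, the rank $q-h-m$ of the constraints is asserted rather than derived. Step~1 does not supply the missing piece either. Imposing the congruence modulo $X^q-X$ forces $Q(\tau)=0$ for every $\tau\in\cH$, hence $Q=0$, which is a different condition.

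\textbf{What is missing.} You need a step that turns the pointwise condition on $\cH^c$ into a polynomial identity. There are two ways to do this.

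\emph{The paper's route.} Put $A=F'+z_\infty$ and write $P_{\cH}A-Q=P_{\cH^c}G$.
\begin{itemize}
\item Multiply by $P_{\cH}$ to get $P_{\cH}^2A-P_{\cH}Q=(X^q-X)G$.
\item Differentiate, using $A'=0$ in characteristic two, to obtain $(P_{\cH}Q)'=G+(X^q-X)G'$.
\item Compare degrees ($2h-2<q$) to force $G'=0$.
\item This yields $A=\sum_{\tau\in\cH}c_\tau(X-\tau)^{q-2}$, and integrating gives $z_s=E(s)+as$.
\end{itemize}

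\emph{Completing your fallback.} Let $\Theta(\bz)=(F'(s)+z_\infty)_{s\in\cH^c}$.
\begin{itemize}
\item On $E_{\cH}$, the image of $\Theta$ lies in $\{(Q(s)/P_{\cH}(s))_{s}:\deg Q<h\}$, which has dimension at most $h$.
\item For the kernel, note that $F'+z_\infty$ has only even exponents up to $q-2$. So it equals $D^2$ with $\deg D\le m-1$.
\item Vanishing at the $q-h\ge m+1$ points of $\cH^c$ forces $D=0$. Hence $F=z_\infty X+E_0$ with $E_0\in\Span_{\F_q}\{1,X^2,\ldots,X^{q-2}\}$.
\item The $h$ conditions $F(\tau)=0$ ($\tau\in\cH$) are independent on the $(m+1)$-dimensional space of pairs $(z_\infty,E_0)$. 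They are already independent on the subspace $z_\infty=0$: writing $E_0=g(X^2)$ with $\deg g\le m-1$, they are evaluations of $g$ at the $h<m$ distinct points $\tau^2$.
\item Hence $\dim\ker\Theta=m+1-h$, and $\dim E_{\cH}\le(m+1-h)+h=m+1$.
\end{itemize}
Together with your inclusion, this gives equality. As written, your proposal establishes only $\dim E_{\cH}\ge m+1$.
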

\begin{proof}
Let $E_{\text{param}}$ denote the set on the right-hand side of the lemma.
We first prove $E_{\cH} \subseteq E_{\text{param}}$.
Let $\bz \in E_{\cH}$. By the definition of $E_{\cH}$, there is a polynomial $F(X)$ with $\deg(F) \le q-1$ 
satisfying $F(\tau) = 0$ for all $\tau \in \cH$,
$F(s) = z_s$ for all $s \in \cH^c$, and there is a polynomial $Q(X)$ with $\deg(Q) < h$ 
satisfying $Q(s) = P_{\cH}(s)(F'(s) + z_\infty)$ for all $s \in \cH^c$.
Let $a = z_\infty$ and $A(X) = F'(X) + a$.
Since $P_{\cH}(s)A(s) - Q(s) = 0$ for all $s \in \cH^c$,
the vanishing polynomial $P_{\cH^c}(X) = \prod_{s \in \cH^c} (X-s)$ must divide $P_{\cH}(X)A(X) - Q(X)$.
Thus there is a polynomial $G(X) \in \F_q[X]$ such that
$
P_{\cH}(X)A(X) - Q(X) = P_{\cH^c}(X)G(X).
$
Since $\deg A = \deg F' \le q-2$ and $\deg Q < h$, comparing degrees yields $\deg G \le (h+q-2) - (q-h) = 2h-2$.
Multiply the identity by $P_{\cH}(X)$ and substitute $P_{\cH}(X)P_{\cH^c}(X) = X^q - X$:
\[
    P_{\cH}(X)^2 A(X) - P_{\cH}(X)Q(X) = (X^q - X)G(X).
\]
Apply formal differentiation $\frac{d}{dX}$ over $\F_q$.
Since $F'(X)$ contains only even powers,
$
    A'(X) = F''(X) + a' = 0,
$
and
$
    (P_{\cH}^2 A)'  = 0.
$
Differentiating $P_{\cH}^2 A - P_{\cH}Q = (X^q - X)G$, we have
\[
    (P_{\cH}Q)' = G + (X^q - X)G'.
\]
Analyze the degrees of both sides. For the left-hand side,
$\deg(P_{\cH}Q)' \le 2h - 2$.
Assume $G' \neq 0$. For the right-hand side:
$
    \deg\bigl((X^q - X)G'\bigr) = q + \deg(G') \ge q.
$
Given the parameter bound $h \le \frac{q}{2} - 1$, we have
\[
    \deg(P_{\cH}Q)' \le 2\left(\frac{q}{2} - 1\right) - 2 = q - 4.
\]
The assumption $G' \neq 0$ creates a degree contradiction.
Thus $G' = 0$, which yields $G = (P_{\cH}Q)'$.
Substitute $G$ back to isolate $A(X)$,
giving
\[
A = \frac{P_{\cH}Q + (X^q - X)(P_{\cH}Q)'}{P_{\cH}^2} = \frac{Q}{P_{\cH}} + (X^q - X) \left(\frac{Q}{P_{\cH}}\right)'.
\]
Since $\deg(Q) < \deg(P_{\cH})$ and $P_{\cH}(X) = \prod_{\tau \in \cH} (X-\tau)$, apply partial fractions:
\[
    \frac{Q(X)}{P_{\cH}(X)} = \sum_{\tau \in \cH} \frac{c_\tau}{X-\tau}, \quad \text{where } c_\tau = \frac{Q(\tau)}{P_{\cH}'(\tau)}.
\]
Substitute this into $A(X)$:
\[
    A(X) = \sum_{\tau \in \cH} c_\tau \left( \frac{1}{X-\tau} + (X^q - X) \frac{1}{(X-\tau)^2} \right)
= \sum_{\tau \in \cH} c_\tau (X-\tau)^{q-2}.
\]
From $A(X) = F'(X) + a$, we have
\[
    F'(X) = a + \sum_{\tau \in \cH} c_\tau (X-\tau)^{q-2}.
\]
For polynomials of degree $\le q-1$, the kernel of $\frac{d}{dX}$ consists precisely of even-powered polynomials:
\[
    \ker\left(\frac{d}{dX}\right) = \Span_{\F_q}\{1, X^2, X^4, \dots, X^{q-2}\}.
\]
Integrating $F'(X)$ yields the form of $F(X)$:
\[
    F(X) = U(X) + aX + \sum_{\tau \in \cH} c_\tau (X-\tau)^{q-1}, \quad \text{where } U(X) \in \Span_{\F_q}\{1, X^2, \dots, X^{q-2}\}.
\]
Define $c_0 =\sum_{\tau \in \cH} c_\tau$ and let $E(X)= U(X) + c_0$. Since $c_0$ is a constant, $E(X) \in \Span_{\F_q}\{1, X^2, \dots, X^{q-2}\}$.
Evaluate $F(s)$ for $s \in \cH^c$:
\[
    z_s = F(s) = U(s) + as + \sum_{\tau \in \cH} c_\tau = U(s) + c_0 + as = E(s) + as.
\]
Because $z_s = E(s) + as$ and $z_\infty = a$, we conclude $\bz \in E_{\text{param}}$. This proves $E_{\cH} \subseteq E_{\text{param}}$.

We next prove $E_{\text{param}} \subseteq E_{\cH}$.
Let $\bz \in E_{\text{param}}$. Then $z_\infty = a$ and $z_s = E(s) + as$ for some $E \in \Span_{\F_q}\{1, X^2, \ldots, X^{q-2}\}$ and $a \in \F_q$.
Define constants $c_\tau = E(\tau) + a\tau$ for all $\tau \in \cH$, and let $c_0 = \sum_{\tau \in \cH} c_\tau$.
Define the purely even-powered polynomial $U(X) = E(X) + c_0$.
Construct the polynomial $F(X)$:
\[
    F(X) = U(X) + aX + \sum_{\tau \in \cH} c_\tau (X-\tau)^{q-1}.
\]
For any $\tau_0 \in \cH$, we have
\[
    F(\tau_0) = U(\tau_0) + a\tau_0 + \sum_{\tau \in \cH, \tau \neq \tau_0} c_\tau = U(\tau_0) + a\tau_0 + c_0 - c_{\tau_0},
\]
giving  $F(\tau_0) = 0$.
For any $s \in \cH^c$,  we have
\[
    F(s) = U(s) + as + \sum_{\tau \in \cH} c_\tau = U(s) + c_0 + as = E(s) + as = z_s.
\]
Since $\deg F \le q-1$ and $\vert{}\cH \cup \cH^c\vert{} = q$, this polynomial $F(X)$ coincides with 
the unique interpolation polynomial associated with $\bz$.
Construct the polynomial $Q(X) = P_{\cH}(X) \sum_{\tau \in \cH} \frac{c_\tau}{X-\tau}$, which ensures $\deg(Q) < \deg(P_{\cH}) = h$.
Calculate the formal derivative of $F(X)$:
\[
    F'(X) = a + \sum_{\tau \in \cH} c_\tau (X-\tau)^{q-2}.
\]
Evaluate $P_{\cH}(s)(F'(s) + z_\infty)$ at $s \in \cH^c$:
\[
    P_{\cH}(s)(F'(s) + a) = P_{\cH}(s) \left( a + \sum_{\tau \in \cH} c_\tau (s-\tau)^{q-2} + a \right) = P_{\cH}(s) \sum_{\tau \in \cH} c_\tau (s-\tau)^{q-2},
\]
yielding
\[
    P_{\cH}(s) \sum_{\tau \in \cH} \frac{c_\tau}{s-\tau} = Q(s).
\]
Because there exist valid polynomials $F(X)$ and $Q(X)$ satisfying all defining constraints, $\bz \in E_{\cH}$. 
This proves $E_{\text{param}} \subseteq E_{\cH}$, establishing $E_{\cH} = E_{\text{param}}$.

Having established $E_{\cH} = E_{\text{param}}$, we determine its dimension.
The space $E_{\text{param}}$ is parameterized by $a \in \F_q$ and $E(X) = \sum_{j=0}^{m-1} e_j X^{2j}$, 
which provides exactly $m+1$ independent coefficients.
Assume $(E, a)$ maps to the zero vector, meaning $z_\infty = a = 0$ and $z_s = E(s) + as = 0$ for all $s \in \cH^c$.
Because $a = 0$, we have $E(s) = 0$ for all $s \in \cH^c$.
Since $E(X)$ contains only even powers and $\deg(E) \le 2m-2$, 
there exists $K(X) \in \F_q[X]$ with $\deg K\leq m-1$ such that $E(X) = K(X)^2$.
For all $s \in \cH^c$, we have $E(s) = K(s)^2 = 0$, yielding $K(s) = 0$.
Since $\deg(K) \le m-1 < m+1 \le |\cH^c|$, this forces $E(X) = 0$.
The parameterization mapping is strictly injective, so no dimensions are lost:
\[
    \dim_{\F_q} E_{\cH} = \dim_{\F_q} E_{\text{param}} = m + 1.
\]

Finally, we prove that $E_{\text{param}} \subseteq \cC_{\text{RS}}((\cH^c)^2, m+1)$.
Let $\bz \in E_{\text{param}}$. The coordinates are $z_\infty = a$ and $z_s = E(s) + as$, 
where $E(X) = \sum_{j=0}^{m-1} e_j X^{2j}$.
Letting $x = s^2$, 
the value $E(s)$ becomes:
\[
    E(s) = \sum_{j=0}^{m-1} e_j (s^2)^j = \sum_{j=0}^{m-1} e_j x^j.
\]
The coordinates evaluate to
\[
    z_s = E(s) + as = \sum_{j=0}^{m-1} e_j x^j + a x^m.
\]
Define $f(X) = \sum_{j=0}^{m-1} e_j X^j + a X^m$. Clearly $\deg f \le m$.
We have $z_s = f(s^2)$ for all $s \in \cH^c$, and $z_\infty = a = f_m$.
Thus $\bz \in \cC_{\text{RS}}((\cH^c)^2, m+1)$, proving $E_{\text{param}} \subseteq \cC_{\text{RS}}((\cH^c)^2, m+1)$.
The strict equality holds by dimension.
\end{proof}

We now construct an explicit basis for the dual space
\[
    E_{\cH}^\perp = \left\{ \by = \bigl((y_s)_{s \in \cH^c} \,;\, y_\infty\bigr) \in \F_q^{\cH^c \cup \{\infty\}} \;\middle\vert\; \sum_{s \in \cH^c} y_s z_s + y_\infty z_\infty = 0 \text{ for all } \bz \in E_{\cH} \right\}.
\]
The ambient vector space $\F_q^{\cH^c \cup \{\infty\}}$ has dimension $|\cH^c| + 1 = 2m - h + 1$.
By Lemma~\ref{lem:space}, $\dim_{\F_q} E_{\cH} = m + 1$, so the dimension of the dual space is
\[
    t = \dim_{\F_q}(E_{\cH}^\perp) = (2m - h + 1) - (m + 1) = m - h \ge 1.
\]
For each index $0 \le r \le t-1$, we define the vector $\bb^{(r)} = \bigl((b^{(r)}_s)_{s \in \cH^c} \,;\, b^{(r)}_\infty\bigr) \in \F_q^{\cH^c \cup \{\infty\}}$ by
\begin{equation}\label{eq:dualrows}
    b^{(r)}_s = P_{\cH}(s)^2 s^{2r} \quad (\text{for all } s \in \cH^c), \qquad
    b^{(r)}_\infty = \begin{cases}
        0, & 0 \le r \le t-2, \\
        1, & r = t-1.
    \end{cases}
\end{equation}

\begin{proposition}\label{prop:dual}
The $t$ vectors $\bb^{(0)}, \bb^{(1)}, \dots, \bb^{(t-1)}$ defined in \eqref{eq:dualrows}  form a basis for $E_{\cH}^\perp$.
\end{proposition}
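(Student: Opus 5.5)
The plan is to verify two things: each $\bb^{(r)}$ lies in $E_{\cH}^\perp$, and the $t$ vectors are linearly independent. Since $\dim_{\F_q}E_{\cH}^\perp=t=m-h$ has already been computed from Lemma~\ref{lem:space}, these two facts give a basis.

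\emph{Orthogonality.} By Lemma~\ref{lem:space}, a general element of $E_{\cH}$ is $\bz=\bigl((E(s)+as)_{s\in\cH^c};a\bigr)$ with $E\in\Span_{\F_q}\{1,X^2,\dots,X^{q-2}\}$. The pairing is
\[
 \sum_{s\in\cH^c}P_{\cH}(s)^2s^{2r}E(s)\;+\;a\sum_{s\in\cH^c}P_{\cH}(s)^2s^{2r+1}\;+\;b^{(r)}_\infty a .
\]
Both summands vanish at $s\in\cH$ because of the factor $P_{\cH}^2$. So each sum over $\cH^c$ equals the same sum over all of $\F_q$.

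We use the standard power-sum fact in characteristic two: $\sum_{x\in\F_q}x^j$ equals $1$ if $j>0$ and $(q-1)\mid j$, and equals $0$ otherwise. The case $j=0$ gives $q\cdot 1=0$.

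For the first sum, note that $P_{\cH}(X)^2X^{2r}E(X)$ contains only even powers of $X$. Its degree is at most $2h+2(t-1)+q-2=2q-4$, using $t=m-h$ and $2m=q$. The only positive multiples of $q-1$ in this range are $q-1$, which is odd, and $2q-2>2q-4$. Hence no monomial contributes, and the first sum is $0$.

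For the second sum, $P_{\cH}(X)^2X^{2r+1}$ contains only odd powers of $X$. Its degree is $2h+2r+1\le 2h+2t-1=q-1$, with equality exactly when $r=t-1$. If $r\le t-2$, every exponent is positive and less than $q-1$, so the sum is $0$, and also $b^{(r)}_\infty=0$. If $r=t-1$, only the leading monomial $X^{q-1}$ contributes. Its coefficient is $1$ because $P_{\cH}$ is monic, so the sum is $1$. The total pairing is then $a+a=0$.

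\emph{Independence.} Suppose $\sum_r c_r\bb^{(r)}=0$, and set $C(Y)=\sum_{r=0}^{t-1}c_rY^r$. The finite coordinates give $P_{\cH}(s)^2C(s^2)=0$ for all $s\in\cH^c$. Since $P_{\cH}(s)\ne0$ there, we get $C(s^2)=0$.

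Because Frobenius is bijective on $\F_q$, we can write $C(s^2)=D(s)^2$, where $D$ is obtained from $C$ by taking the square root of each coefficient. Then $D$ vanishes on all of $\cH^c$. But $\deg D\le t-1<q-h=|\cH^c|$, so $D=0$, hence every $c_r=0$.

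The only delicate step is the exponent bookkeeping in the orthogonality argument. It relies on the bound $h\le m-1$ and the exact value $t=m-h$, so that the odd monomial $X^{q-1}$ appears precisely when $r=t-1$. The normalization $b^{(t-1)}_\infty=1$ is exactly what cancels that contribution.
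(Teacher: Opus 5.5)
Your proof is correct, but it takes a different route from the paper's. The paper works on the squared evaluation set. It uses the identification $E_{\cH}=\cC_{\text{RS}}((\cH^c)^2,m+1)$ from Lemma~\ref{lem:space} and shows $P'_{(\cH^c)^2}(s^2)=P_{\cH}(s)^{-2}$, which rewrites $b^{(r)}_s$ as $x^r/P'_{(\cH^c)^2}(x)$ with $x=s^2$. It then pairs these vectors against a polynomial $f$ of degree at most $m$ using the Euler--Lagrange identity over $(\cH^c)^2$. In effect, it recognizes the $\bb^{(r)}$ as the classical parity-check rows of an extended Reed--Solomon code.

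You instead use only the parametric description $\bigl((E(s)+as)_{s\in\cH^c};a\bigr)$. The factor $P_{\cH}^2$ lets you extend the sums from $\cH^c$ to all of $\F_q$, and you evaluate them with full-field power sums. Parity of exponents then does the bookkeeping: the even part coming from $E$ never reaches the odd exponent $q-1$, while the linear term reaches $X^{q-1}$ exactly when $r=t-1$. The two computations are secretly the same, since $P_{\cH}(s)=1/P'_{\cH^c}(s)$ on $\cH^c$, so power sums over $\F_q$ weighted by $P_{\cH}$ are Euler--Lagrange sums over $\cH^c$.

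Your version is more self-contained: it needs neither the passage to $(\cH^c)^2$ nor the derivative computation. It also makes transparent why the normalization $b^{(t-1)}_\infty=1$ is exactly what is needed. The paper's version, in exchange, explains the structural origin of the vectors as the dual of a generalized Reed--Solomon code.

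Your independence argument is also slightly leaner than the paper's, since it never uses the infinity coordinate. The detour through $D$ is harmless but unnecessary: $C$ itself vanishes at the $q-h$ distinct points $s^2$, and $\deg C\le t-1<q-h$.
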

\begin{proof}
The cardinality of $(\cH^c)^2 = \{s^2 \mid s \in \cH^c\}$ is $|(\cH^c)^2| = |\cH^c| = q - h = 2m - h$.
By Lemma~\ref{lem:space}, we have $E_{\cH} = \cC_{\text{RS}}((\cH^c)^2, m+1)$.
Consider the monic vanishing polynomials for $\cH^c$ and $(\cH^c)^2$:
\[
    P_{\cH^c}(X) = \prod_{c \in \cH^c} (X - c), \qquad P_{(\cH^c)^2}(X) =  \prod_{c \in \cH^c} (X - c^2).
\]
For $s\in\cH^c$, we have
\[
    P_{\cH^c}'(s) = \prod_{c \in \cH^c \setminus \{s\}} (s - c)
~\text{and}~
 P_{(\cH^c)^2}'(s^2) = \prod_{c \in \cH^c \setminus \{s\}} (s^2 - c^2) =  P_{\cH^c}'(s)^2.
\]
From the factorization $X^q - X = P_{\cH}(X)P_{\cH^c}(X)$, formal differentiation yields
$
    1 = P_{\cH}'(X)P_{\cH^c}(X) + P_{\cH}(X)P_{\cH^c}'(X).
$
Evaluating at $s \in \cH^c$ gives $1 = P_{\cH}(s)P_{\cH^c}'(s)$. 
Squaring both sides yields $P_{\cH^c}'(s)^2 = P_{\cH}(s)^{-2}$, and therefore
$
    P_{(\cH^c)^2}'(s^2) = P_{\cH}(s)^{-2}.
$
Setting $x = s^2 \in (\cH^c)^2$, the finite coordinates of $\bb^{(r)}$ can be rewritten as
\[
    b^{(r)}_s = P_{\cH}(s)^2 (s^2)^r = \frac{x^r}{P_{(\cH^c)^2}'(x)}.
\]

Consider the Lagrange interpolation of the monomial $X^d$ over the set $(\cH^c)^2$ for $0 \le d \le q - h - 1$:
\[
    X^d = \sum_{x \in (\cH^c)^2} x^d \frac{P_{(\cH^c)^2}(X)}{(X-x)P_{(\cH^c)^2}'(x)}.
\]
Since $P_{(\cH^c)^2}(X)$ is a monic polynomial of degree $q - h$, each quotient $\frac{P_{(\cH^c)^2}(X)}{X-x}$ is a monic polynomial of degree $q - h - 1$. Comparing the coefficients of $X^{q-h-1}$ on both sides yields the standard Euler--Lagrange identity:
\[
    \sum_{x \in (\cH^c)^2} \frac{x^d}{P_{(\cH^c)^2}'(x)} = \begin{cases} 0, & 0 \le d \le q - h - 2, \\ 1, & d = q - h - 1. \end{cases}
\]

Let $\bz \in E_{\cH}$. By Lemma~\ref{lem:space}, $\bz$ is parameterized by a polynomial 
$f(X) = \sum_{j=0}^m f_j X^j$ with $\deg f\le m$ such that $z_s = f(s^2)$ for all $s \in \cH^c$ and $z_\infty = f_m$.
The bilinear pairing between $\bb^{(r)}$ and $\bz$ is
\[
    \langle \bb^{(r)}, \bz \rangle = \sum_{s \in \cH^c} b^{(r)}_s z_s + b^{(r)}_\infty z_\infty = \sum_{x \in (\cH^c)^2} \frac{x^r f(x)}{P_{(\cH^c)^2}'(x)} + b^{(r)}_\infty f_m.
\]
Expanding $f(x)$ and exchanging the order of summation gives
\[
    \sum_{x \in (\cH^c)^2} \frac{x^r f(x)}{P_{(\cH^c)^2}'(x)} = \sum_{j=0}^m f_j \left( \sum_{x \in (\cH^c)^2} \frac{x^{r+j}}{P_{(\cH^c)^2}'(x)} \right).
\]
Let $d_j = r+j$, which has maximum value $d_{\max} = r + m$ as $0 \le j \le m$.
We evaluate the pairing according to the index $0 \le r \le t-1$:

\textbf{Case 1:} $0 \le r \le t-2$ (when $t \ge 2$).
Here the maximum exponent is bounded by
$
    d_{\max} \le (t-2) + m  = q-h-2.
$
Thus $d_j \le q-h-2$ for all $0 \le j \le m$, so all inner sums vanish by the Euler--Lagrange identity. Since $b^{(r)}_\infty = 0$, we obtain
$
    \langle \bb^{(r)}, \bz \rangle = 0.
$

\textbf{Case 2:} $r = t-1$.
Here the maximum exponent is
$
    d_{\max} = (t-1) + m = q-h-1.
$
The exponent $d_j = q-h-1$ is attained if and only if $j = m$. By the Euler--Lagrange identity, 
the inner sum vanishes for all $j < m$ and equals $1$ for $j = m$, so the finite coordinate sum equals $f_m$. 
Since $b^{(t-1)}_\infty = 1$, we obtain
$
    \langle \bb^{(t-1)}, \bz \rangle = 0.
$
Consequently, $\langle \bb^{(r)}, \bz \rangle = 0$ for all $0 \le r \le t-1$, proving that $\bb^{(r)} \in E_{\cH}^\perp$.
To prove linear independence, suppose that
\[
    \sum_{r=0}^{t-1} \lambda_r \bb^{(r)} = \mathbf{0}
\]
for some scalars $\lambda_0, \ldots, \lambda_{t-1} \in \F_q$.
Evaluating the infinity coordinate gives
$\lambda_{t-1} = 0$.
If $t = 1$, this already completes the proof. If $t \ge 2$, evaluating the finite coordinates at $s \in \cH^c$ 
and omitting the $r = t-1$ term yields
$
    \sum_{r=0}^{t-2} \lambda_r P_{\cH}(s)^2 (s^2)^r = 0 
$
for all $s \in \cH^c$.
Since $s \notin \cH$, we have $P_{\cH}(s) \neq 0$. Dividing by $P_{\cH}(s)^2$ 
and setting $x = s^2$ gives
$
    \sum_{r=0}^{t-2} \lambda_r x^r = 0 
$
for all  $x \in (\cH^c)^2$.
Thus the polynomial $G(X) = \sum_{r=0}^{t-2} \lambda_r X^r$ 
has degree at most $t-2 = m-h-2$, while vanishing on all $|(\cH^c)^2| = 2m-h$ 
elements of $(\cH^c)^2$.
Since $2m - h > m - h - 2 \ge \deg(G)$, the polynomial $G(X)$ 
must be identically zero, so $\lambda_r = 0$ for all $0 \le r \le t-2$.
Therefore, the $t$ vectors $\bb^{(0)}, \ldots, \bb^{(t-1)}$ are 
linearly independent. Since $\dim_{\F_q}(E_{\cH}^\perp) = t$, they form a basis for $E_{\cH}^\perp$.
\end{proof}

\section{Algebraic properties for the dual code}
To link the space $E_{\cH}$ with the generator matrix of the arc $\cA$, we extract a first-order differential identity on $V^*$ directly
from the local factorization of the homogeneous polynomial $\Phi$ provided by Lemma~\ref{lem:input}.
Fix a basis of $V^*$ to identify elements of $V^*$ with coordinate vectors in $\F_q^k$. 
For any polynomial $G \in \F_q[X_1, \ldots, X_k]$ on $V^*$, 
its formal directional differential at a base point 
$\phi = (\phi_1, \ldots, \phi_k) \in V^*$ 
in the direction $\eta = (\eta_1, \ldots, \eta_k) \in V^*$ is defined by
\[
 dG_\phi(\eta) = \sum_{j=1}^k \eta_j \frac{\partial G}{\partial X_j}(\phi) \in \F_q,
\]
where $\partial G / \partial X_j$ denotes the ordinary formal 
partial derivative over $\F_q$. Equivalently, $dG_\phi(\eta)$ is the 
coefficient of the linear term $T$ in the polynomial expansion 
$G(\phi + T\eta) \in \F_q[T]$; consequently, 
its value is independent of the choice of basis on $V^*$, 
and the map $\eta \mapsto dG_\phi(\eta)$ is $\F_q$-linear on $V^*$.

\begin{lemma}\label{lem:gradient}
Let $k \ge 4$ be an integer,  $h = k - 3$ and $n = q + 2$ with $n \ge h + k - 1$. 
Let $\cA = \{\bv_1, \ldots, \bv_n\} \subset V \setminus \{\mathbf{0}\}$ 
be a projective arc in $V = \F_q^k$ indexed by $\cI = \{1, \ldots, n\}$, and let 
$\Phi \in \F_q[X_1, \ldots, X_k]_h$ be the homogeneous 
envelope polynomial of degree $h$ on $V^*$ provided by Lemma~\ref{lem:input}. 
For any $(k-1)$-subset $\Lambda' \subset \cI$ and every nonzero linear functional 
$\phi \in \Ann_{V^*}(U_{\Lambda'})$, one has $\Phi(\phi) \ne 0$ 
and $\phi(\bv_i) \ne 0$ for all $i \notin  \Lambda'$, 
and the formal differential of $\Phi$ at $\phi$ satisfies
\begin{equation}\label{eq:gradient}
 \frac{d\Phi_\phi(\eta)}{\Phi(\phi)}
 = \sum_{i \notin \Lambda'} \frac{\eta(\bv_i)}{\phi(\bv_i)}
 \qquad\text{for every }\eta \in V^*.
\end{equation}
\end{lemma}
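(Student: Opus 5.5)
The plan is to reduce the differential identity to a one-dimensional identity on each line $\PP(\Ann_{V^*}(U_\Lambda))$ with $\Lambda\subset\Lambda'$, and then assemble the pieces by linearity in $\eta$. First the non-vanishing claims. Since $|\Lambda'|=k-1$ and the $\bv_i$, $i\in\Lambda'$, are independent, $\Ann_{V^*}(U_{\Lambda'})$ is one-dimensional, spanned by $\phi$. For $i\notin\Lambda'$ the $k$ vectors $\{\bv_j\}_{j\in\Lambda'}\cup\{\bv_i\}$ span $V$, so $\phi(\bv_i)\neq0$.

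For $\Phi(\phi)\ne0$, fix $l\in\Lambda'$ and put $\Lambda=\Lambda'\setminus\{l\}$. Then $\phi\in\Ann_{V^*}(U_\Lambda)$ and $\phi(\bv_l)=0$ with $l\in\Lambda^c$, so $[\phi]\notin M_\Lambda$. Lemma~\ref{lem:input}(2) then gives $\Phi(\phi)\neq0$. Note that here $h=q+k-1-n=k-3$, consistent with the hypotheses.

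Next comes the local identity. Fix $\Lambda=\Lambda'\setminus\{l\}$ and $W=\Ann_{V^*}(U_\Lambda)$, a $2$-dimensional space containing $\phi$. For $\eta\in W$, $d\Phi_\phi(\eta)$ is the $T$-coefficient of $\Phi(\phi+T\eta)$, so it depends only on $\Phi|_W$. By Lemma~\ref{lem:input}(3), $\Phi|_W=c_\Lambda\prod_j M_{\Lambda,j}$, and logarithmic differentiation gives
\[
\frac{d\Phi_\phi(\eta)}{\Phi(\phi)}=\sum_{j=1}^h\frac{M_{\Lambda,j}(\eta)}{M_{\Lambda,j}(\phi)}\qquad(\eta\in W).
\]
The $q+1$ points of $\PP(W)$ are cut out by the restrictions of the forms $\eta\mapsto\eta(\bv_i)$, $i\in\Lambda^c$, together with the $M_{\Lambda,j}$. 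Here $[\phi]$ is the zero of $\eta\mapsto\eta(\bv_l)$. The key fact is that for a nonzero $\phi\in W$, choosing for each point $p\ne[\phi]$ a linear form $L_p$ on $W$ vanishing at $p$, one has
\[
\sum_{p\neq[\phi]}\frac{L_p(\eta)}{L_p(\phi)}=0\qquad\text{for all }\eta\in W.
\]
To verify it, choose coordinates with $\phi=(1,0)$ and $L_{(a:1)}=X+aY$ for $a\in\F_q$. For $\eta=(1,0)$ the sum is $q=0$ in characteristic two, and for $\eta=(0,1)$ it is $\sum_{a\in\F_q}a=0$. Each summand is independent of the scaling of $L_p$, and the statement is linear in $\eta$, so these two cases suffice. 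Since $-1=1$, this gives
\[
\sum_j\frac{M_{\Lambda,j}(\eta)}{M_{\Lambda,j}(\phi)}=\sum_{i\in\Lambda^c\setminus\{l\}}\frac{\eta(\bv_i)}{\phi(\bv_i)}=\sum_{i\notin\Lambda'}\frac{\eta(\bv_i)}{\phi(\bv_i)} ,
\]
so \eqref{eq:gradient} holds for every $\eta\in\Ann_{V^*}(U_{\Lambda'\setminus\{l\}})$.

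Finally, both sides of \eqref{eq:gradient} are $\F_q$-linear in $\eta$. It therefore suffices that the subspaces $\Ann_{V^*}(U_{\Lambda'\setminus\{l\}})$, $l\in\Lambda'$, span $V^*$. Their sum is the annihilator of $\bigcap_{l}U_{\Lambda'\setminus\{l\}}$. That intersection is $\{0\}$ because the $k-1\ge3$ vectors $\bv_i$, $i\in\Lambda'$, are linearly independent, so the sum is all of $V^*$. The step needing the most care is the characteristic-two vanishing of the full-line sum: this is exactly where $q$ even enters, and the bookkeeping must be right that the remaining points $p\ne[\phi]$ are precisely the tangent points $M_\Lambda$ and the zeros of $\eta(\bv_i)$ for $i\notin\Lambda'$. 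The rest is routine.
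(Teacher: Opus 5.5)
Your proof is correct and follows essentially the same route as the paper. You use the same nonvanishing arguments, the same reduction to the lines $\Ann_{V^*}(U_{\Lambda'\setminus\{l\}})$, and the same extension by linearity in $\eta$; for the spanning step you cite $\sum_l \Ann_{V^*}(U_{\Lambda'\setminus\{l\}})=\Ann_{V^*}\bigl(\bigcap_l U_{\Lambda'\setminus\{l\}}\bigr)$, where the paper uses an explicit dual basis. The one difference is in the local step: you log-differentiate $c_\Lambda\prod_j M_{\Lambda,j}$ and check directly that the $q$ logarithmic derivatives through the points $p\neq[\phi]$ sum to zero ($q=0$ and $\sum_{a\in\F_q}a=0$), whereas the paper multiplies by all the $L_i$, identifies the product with $X^qY-XY^q$, and differentiates $X^q-XY^{q-1}$ at $(1,0)$ --- the same fact computed two ways.
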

\begin{proof}
We first claim that $\phi(\bv_i) \ne 0$ for every $i \notin \Lambda'$. 
Suppose for contradiction that there exists some $i \notin \Lambda'$ such that $\phi(\bv_i) = 0$. 
Then $\phi$ vanishes on the $k$ columns indexed by $\Lambda' \cup \{i\}$. By the definition of an arc, 
we have $\phi = 0$, which is a contradiction. 

Fix an arbitrary element $a \in \Lambda'$ and define the subset $\Lambda = \Lambda' \setminus \{a\}$.  
We have $\phi \in \Ann_{V^*}(U_\Lambda)$.  By Lemma~\ref{lem:input}, 
the roots of the restricted polynomial $\Phi|_{\Ann_{V^*}(U_\Lambda)}$ are exactly the missing points $M_\Lambda$, which are 
characterized by the strict condition: $[\xi]$ is a root if and only if $\xi(\bv_i) \ne 0$ for every $i \notin \Lambda$. However, 
evaluating $\phi$ at the column $\bv_a$ (where $a \notin \Lambda$), we know $\phi(\bv_a) = 0$ because $a \in \Lambda'$. 
This means $[\phi]$ fails the necessary condition to be a missing point,  yielding $\Phi(\phi) \ne 0$.

For each $i \notin \Lambda$, 
define the linear form $L_i(\xi) = \xi(\bv_i)$ on $\Ann_{V^*}(U_\Lambda)$. The roots of these linear forms account for 
$|\Lambda^c| = n - k + 2 = q + 1 - h$ distinct points on $\PP(\Ann_{V^*}(U_\Lambda))$. Because $\Phi|_{\Ann_{V^*}(U_\Lambda)}$ 
evaluates to nonzero on the roots of $L_i$, these two sets of roots are strictly disjoint. Therefore, the product polynomial
\[
 \left.\left(\Phi\prod_{i\notin \Lambda}L_i\right)\right|_{\Ann_{V^*}(U_\Lambda)}
\]
has degree $h + (q + 1 - h) = q + 1$, and its roots exactly cover all $q+1$ rational points of $\PP(\Ann_{V^*}(U_\Lambda))$, each with algebraic multiplicity one.

We now establish a local coordinate system $(X,Y)$ on $\Ann_{V^*}(U_\Lambda)$. Choose the first basis vector to be $\alpha = \phi$, so that $\phi$ has coordinates $(1,0)$. 
We can choose the second basis vector $\beta$ such that $L_a(\beta) \ne 0$, making the linear form $L_a$ strictly proportional to the coordinate $Y$, 
meaning $L_a(X,Y) = cY$ for some $c \in \F_q^*$.
In finite fields, the unique (up to a scalar multiple) degree-$(q+1)$ binary form with all $q+1$ projective points as simple roots is the Dickson form $X^qY - XY^q$. 
By the fundamental theorem of algebra, our product polynomial must be a nonzero scalar multiple of this form:
\[
 \left.\left(\Phi\prod_{i\notin \Lambda}L_i\right)\right|_{\Ann_{V^*}(U_\Lambda)} = c_0(X^qY - XY^q), \qquad c_0 \in \F_q^*.
\]
Since $\Lambda = \Lambda' \setminus \{a\}$, 
we split the product over $i \notin \Lambda$ into the factor $L_a$ 
and the product over $i \notin \Lambda'$. 
Substituting $L_a = cY$ and factoring $Y$ on the right side yields:
\[
 cY \cdot \left.\left(\Phi\prod_{i\notin \Lambda'}L_i\right)\right|_{\Ann_{V^*}(U_\Lambda)} = c_0 Y (X^q - XY^{q-1}).
\]
Defining $c_\Lambda = c_0/c \ne 0$, we obtain the exact algebraic identity:
\[
 \left.\left(\Phi\prod_{i\notin \Lambda'}L_i\right)\right|_{\Ann_{V^*}(U_\Lambda)} = c_\Lambda(X^q - XY^{q-1})
\]
where $c_\Lambda \ne 0$.
Let $G_\Lambda(X,Y) = c_\Lambda(X^q - XY^{q-1})$. 
We compute its formal partial derivatives with respect to $X$ and $Y$:
\[
 \frac{\partial G_\Lambda}{\partial X} = c_\Lambda(qX^{q-1} - Y^{q-1}), \qquad \frac{\partial G_\Lambda}{\partial Y} = -c_\Lambda(q-1)XY^{q-2}.
\]
We evaluate these at the base point $\phi$, which corresponds to coordinates $(X,Y) = (1,0)$,
leading to
\[
 \frac{\partial G_\Lambda}{\partial X}(1,0) =  0, 
 \qquad \frac{\partial G_\Lambda}{\partial Y}(1,0) = 0.
\]
Therefore, the first formal differential of $G_\Lambda$ at $\phi$ in any direction 
$\eta = (\eta_X, \eta_Y) \in \Ann_{V^*}(U_\Lambda)$ 
is a linear combination of these partial derivatives:
\[
 d(G_\Lambda)_\phi(\eta) = \eta_X \frac{\partial G_\Lambda}{\partial X}(1,0) + 
 \eta_Y \frac{\partial G_\Lambda}{\partial Y}(1,0) = 0.
\]
Let $L_{\Lambda'} = \prod_{i\notin \Lambda'}L_i$. The established identity on $\Ann_{V^*}(U_\Lambda)$ is $\Phi \cdot L_{\Lambda'} = G_\Lambda$. Taking the formal differential of both sides at $\phi$ in direction $\eta$ yields 
\[
 d\Phi_\phi(\eta) \cdot L_{\Lambda'}(\phi) = \Phi(\phi) \cdot d(L_{\Lambda'})_\phi(\eta).
\]
Dividing both sides by the nonzero product $\Phi(\phi) L_{\Lambda'}(\phi)$ yields:
\[
 \frac{d\Phi_\phi(\eta)}{\Phi(\phi)} = \frac{d(L_{\Lambda'})_\phi(\eta)}{L_{\Lambda'}(\phi)}.
\]
By the product rule for multiple factors, the logarithmic 
derivative of a product expands to the sum of the logarithmic derivatives of its components:
\[
 \frac{d(L_{\Lambda'})_\phi(\eta)}{L_{\Lambda'}(\phi)} = \sum_{i\notin \Lambda'}\frac{d(L_i)_\phi(\eta)}{L_i(\phi)}.
\]
For the linear form $L_i(\xi) = \xi(\bv_i)$, its formal differential strictly equals itself evaluated at the direction vector, $d(L_i)_\phi(\eta) = \eta(\bv_i)$. Substituting this alongside $L_i(\phi) = \phi(\bv_i)$ into the sum proves the identity for all $\eta \in \Ann_{V^*}(U_\Lambda)$:
\[
 \frac{d\Phi_\phi(\eta)}{\Phi(\phi)} = \sum_{i\notin \Lambda'}\frac{\eta(\bv_i)}{\phi(\bv_i)}, 
 \qquad \text{for all}~ \eta \in \Ann_{V^*}(U_\Lambda).
\]

Observe that both sides of the identity in \eqref{eq:gradient} 
are linear with respect to the direction variable $\eta$. 
Since $a \in \Lambda'$ was chosen arbitrarily, the identity holds for any $\eta \in \Ann_{V^*}(U_{\Lambda'\setminus\{a\}})$ and every $a \in \Lambda'$.
To demonstrate that this holds globally across $V^*$, pick an index $b \in \cI \setminus \Lambda'$ and complete the 
linearly independent columns indexed by $\Lambda'$
to a full basis of $V$ by adjoining the column $\bv_b$. Let $\{\mathbf{e}_x^* : x \in \Lambda'\} \cup \{\mathbf{e}_b^*\}$ be the corresponding dual basis of $V^*$.
We have 
$\mathbf{e}_a^* \in \Ann_{V^*}(U_{\Lambda'\setminus\{a\}})$ 
and  $\mathbf{e}_b^* \in \Ann_{V^*}(U_{\Lambda'\setminus\{a\}})$.
Consequently, taking the union of these subspaces over all $a \in \Lambda'$, 
the combined set $\bigcup_{a \in \Lambda'} \Ann_{V^*}(U_{\Lambda'\setminus\{a\}})$ 
contains $\mathbf{e}_b^*$ and every $\mathbf{e}_a^*$ ($a \in \Lambda'$).
Because this union contains the entire dual basis of $V^*$, its linear span covers the whole dual space $V^*$. Since the identity equates two linear mappings that agree on a set of subspaces spanning $V^*$, the mappings must be unconditionally identical on the entire space. This proves the identity for arbitrary $\eta \in V^*$, including directions transverse to any one of these two-dimensional subspaces.
\end{proof}

The connection between the abstract space $E_{\cH}$ and a linear MDS code $\cC$ is established by mapping 
the restricted rows of the generator matrix into $E_{\cH}$. This relies on a precise normalization of the quotient coordinates 
over the locator set $\cH^c$ and the differential identity from Lemma~\ref{lem:gradient}.
Let $\Lambda \subset \cI$ with $| \cI|=q+2$ and $|\Lambda| = k-2$. 
Choose one specific column index $\infty \notin \Lambda$. 
Since $\Ann_{V^*}(U_\Lambda)$ cannot vanish identically on $\bv_\infty$ by the arc property, we may choose a basis $\{\alpha, \beta\}$ of $\Ann_{V^*}(U_\Lambda)$ such that $\alpha(\bv_\infty) = 0$ and $\beta(\bv_\infty) = y \ne 0$.
We apply a nonzero coordinate rescaling 
to the column vector
$
 \btv_\infty = y^{-1} \bv_\infty,
$
yielding
$(\alpha(\btv_\infty), \beta(\btv_\infty)) = (0, 1).$

For any remaining index $i \in \cI \setminus (\Lambda \cup \{\infty\})$, 
we must have $\alpha(\bv_i) = x_i \ne 0$. 
Setting $y_i = \beta(\bv_i)$ and rescaling the column via $\btv_i = x_i^{-1} \bv_i$, we obtain
\[
 (\alpha(\btv_i), \beta(\btv_i)) = (1, s_i), \qquad \text{where } s_i = y_i x_i^{-1} \in \F_q.
\]
Moreover, if $s_i = s_{i'}$ for some $i, i' \in \cI \setminus (\Lambda \cup \{\infty\})$, then 
$\alpha(\btv_i - \btv_{i'}) = 0$ and $\beta(\btv_i - \btv_{i'}) = 0$, meaning $\btv_i - \btv_{i'}$ 
is annihilated by 
$\Ann_{V^*}(U_\Lambda)$ and hence lies in $U_\Lambda$. 
By the linear independence of any $k$ columns of the arc, 
this requires $i = i'$. Thus, as $i$ ranges over the $(q+2) - (k-2) - 1 = q - h$ indices in 
$\cI \setminus (\Lambda \cup \{\infty\})$ ($h = k - 3$), 
the resulting scalars $s_i \in \F_q$ are pairwise distinct. 
We denote this set of $q - h$ affine scalars by
\[
 \cH^c = \{ s_i \in \F_q : i \in \cI \setminus (\Lambda \cup \{\infty\}) \},
\]
and define its complement in $\F_q$ as $\cH = \F_q \setminus \cH^c$, 
which has size $|\cH| = h$. Re-indexing these normalized affine columns by 
$s \in \cH^c$ (writing $\btv_s$ in place of $\btv_i$ when $s_i = s$), the evaluations take the normalized form
\[
 (\alpha(\btv_s), \beta(\btv_s)) = (1, s) \quad \text{for all } s \in \cH^c.
\]

Every functional $\xi \in \Ann_{V^*}(U_\Lambda)$ is uniquely parameterized by 
coordinates $(X,Y) \in \F_q^2$ as $\xi = X\alpha + Y\beta$. 
By Lemma~\ref{lem:input}, a projective point 
$[\xi] \in \PP(\Ann_{V^*}(U_\Lambda))$ is a zero of $\Phi|_{\Ann_{V^*}(U_\Lambda)}$ 
 if and only if $\xi(\btv_\infty) \ne 0$ and $\xi(\btv_s) \ne 0$ for all $s \in \cH^c$. 
 Evaluating $\xi = X\alpha + Y\beta$ at these normalized columns  gives
\[
 \xi(\btv_\infty) = Y, \qquad \xi(\btv_s) = X + sY  \quad (s \in \cH^c).
\]
Consequently, the missing point set $M_\Lambda \subset \PP(\Ann_{V^*}(U_\Lambda))$ does not contain the point at infinity $[\alpha]$, and all $h$ projective zeros of $\Phi|_{\Ann_{V^*}(U_\Lambda)}$ lie in the affine chart where $Y \ne 0$.
Dehomogenizing by setting $Y = 1$, every point in this affine chart is uniquely represented by 
$\xi_X = X\alpha + \beta$ for $X \in \F_q$. Define the univariate polynomial
\[
 H_0(X) = \Phi(X\alpha + \beta) \in \F_q[X].
\]
Writing the homogeneous binary form $\Phi(X\alpha + Y\beta)$ as $\sum_{j=0}^h \tilde{c}_j X^j Y^{h-j}$, the leading coefficient of $X^h$ in $H_0(X)$ is $\tilde{c}_h = \Phi(\alpha) \ne 0$, so $\deg H_0(X) = h$. Furthermore, for any $X \in \F_q$, we have $(X\alpha + \beta)(\btv_\infty) = 1$ and $(X\alpha + \beta)(\btv_s) = X - s$. By Lemma~\ref{lem:input} again, $H_0(X) = 0$ if and only if $X - s \ne 0$ for all $s \in \cH^c$, which is equivalent to $X \in \F_q \setminus \cH^c = \cH$. Hence, the root set of $H_0(X)$ in $\F_q$ is precisely $\cH$, and each of the $h$ elements of $\cH$ is a simple root. Multiplying the global form $\Phi$ by the nonzero scalar $\tilde{c}_h^{-1} \in \F_q^*$, we normalize $\Phi$ so that $\Phi(X\alpha + \beta)$ coincides with the monic vanishing polynomial $P_{\cH}(X) = \prod_{\tau \in \cH} (X - \tau)$:
\[
 P_{\cH}(X) = \Phi(X\alpha + \beta) = X^h + c_{h-1}X^{h-1} + \dots + c_0.
\]
Let $\eta \in V^*$ be an arbitrary  linear functional.
The formal differential of $\Phi$ at the base point $X\alpha + \beta$ in the direction $\eta$ is:
\[
 Q_\eta(X) = d\Phi_{X\alpha+\beta}(\eta) = \sum_{j=1}^k \eta_j \frac{\partial \Phi}{\partial X_j}(X\alpha + \beta).
\]
It follows that
$
 \deg Q_\eta(X) \le h-1.
$
Fix $s \in \cH^c$ and define the base point functional $\phi_s = s\alpha + \beta$.
Define the subset $\Lambda' = \Lambda \cup \{s\}$. 
Evaluate $\phi_s$ on the columns of $\Lambda'$:
1. For $i \in \Lambda$, $\phi_s(\bv_i) = 0$ because $\alpha, \beta \in \Ann_{V^*}(U_\Lambda)$.
2. For the specific column $\btv_s$, $\phi_s(\btv_s) = 
s\alpha(\btv_s) + \beta(\btv_s) = 0$.
Thus, $\phi_s \in \Ann_{V^*}(U_{\Lambda'})$ annihilates the $(k-1)$-set $\Lambda'$.
Since each ratio $\eta(\bv_i)/\phi_s(\bv_i)$ is invariant under nonzero scalar rescaling of $\bv_i$, substituting $\phi_s$ and the normalized columns into Lemma~\ref{lem:gradient} gives
\[
 \frac{d\Phi_{\phi_s}(\eta)}{\Phi(\phi_s)} = \sum_{i \notin \Lambda'} \frac{\eta(\btv_i)}{\phi_s(\btv_i)}.
\]
$\Phi(\phi_s) = \Phi(s\alpha + \beta) = P_{\cH}(s)$ and $d\Phi_{\phi_s}(\eta) = Q_\eta(s)$.
Thus, $ \frac{d\Phi_{\phi_s}(\eta)}{\Phi(\phi_s)} = \frac{Q_\eta(s)}{P_{\cH}(s)}$.
The set of indices $i \notin \Lambda'$ is exactly $(\cH^c \setminus \{s\}) \cup \{\infty\}$.
Define $z_u = \eta(\btv_u)$ for $u \in \cH^c$, and $z_\infty = \eta(\btv_\infty)$.
For affine columns $u \in \cH^c \setminus \{s\}$,  we have
$
 \phi_s(\btv_u) = s+u;
$
for the infinity column, 
$
 \phi_s(\btv_\infty)  = 1.
$
Summing these discrete parts yields:
\[
 \frac{Q_\eta(s)}{P_{\cH}(s)} = \sum_{u \in \cH^c \setminus \{s\}} \frac{z_u}{s+u} + z_\infty.
\]
Recall the global interpolation polynomial for the  space $E_{\cH}$:
\[
 F(X) = \sum_{u \in \cH^c} z_u \frac{X^q-X}{X-u}.
\]
We formally differentiate $F(X)$ and evaluate at $X=s$. We split the sum into $u=s$ and $u \ne s$.

\textbf{Case 1: The $u=s$ summand.}
Divide by $(X-s)$:
\[
\frac{X^q-X}{X-s} = \frac{(X-s)^q + (X-s)}{X-s} = (X-s)^{q-1} + 1.
\]
Differentiate this quotient with respect to $X$:
\[
 \frac{d}{dX} \left[ z_s ((X-s)^{q-1} + 1) \right] = z_s (q-1)(X-s)^{q-2}.
\]
Evaluate at $X=s$ giving $0$.

\textbf{Case 2: The $u \ne s$ summands.}
We have
\[
 \frac{d}{dX} \left( z_u \frac{X^q-X}{X-u} \right) = z_u \frac{1 \cdot (X-u) - (X^q-X) \cdot 1}{(X-u)^2}.
\]
Evaluating  at $X=s$,  the derivative is exactly $\frac{z_u}{s+u}$.
Summing the derivatives gives
\[
 F'(s) = 0 + \sum_{u \in \cH^c \setminus \{s\}} \frac{z_u}{s+u}.
\]
We therefore arrive at 
$
 \frac{Q_\eta(s)}{P_{\cH}(s)} = F'(s) + z_\infty,
$
or equivalently $Q_\eta(s) = P_{\cH}(s) \left( F'(s) + z_\infty \right).$
This holds for every $s \in \cH^c$. Since $\deg Q_\eta < h$, the data vector $\bz = (z_s : s \in \cH^c ; z_\infty)$ perfectly satisfies the algebraic definition of the  space $E_{\cH}$.
Because $\eta \in V^*$ is an arbitrary functional (not restricted to $\Ann_{V^*}(U_\Lambda)$), the vector $\bz$ represents the evaluation of an arbitrary linear combination of the rows of the normalized generator matrix. Thus, every row of this restricted generator matrix inherently belongs to $E_{\cH}$.

Consequently, the vectors in $E_{\cH}^\perp$ describe the dual constraints of the code under these local normalized coordinates. 
Reversing this normalization allows us to express the dual codewords globally in terms of the coordinate-free properties of the arc.

Let $m=q/2\geq4$, $4 \le k \le m + 1$, $h = k-3$, and $t = m - h$. 
Let $\cC$ be a linear $[q+2, k]$ MDS code over $\F_q$ with fixed nonzero generator 
columns $\bv_i \in V = \F_q^k$ for $i \in \cI$ where $|\cI| = q+2$. 
For any index subset $\Lambda \subset \cI$ of size $k-2$,
recall the subspace $U_\Lambda = \Span_{\F_q}\{\bv_i : i \in \Lambda\}$
and the $2$-dimensional quotient space $\Vbar{\Lambda} = V / U_\Lambda$. 
The projected columns $\bp_i = \bv_i + U_\Lambda \in \Vbar{\Lambda}$ ($i \notin \Lambda$) define $q+1-h$ distinct points in $\PP(\Vbar{\Lambda})$, leaving exactly $h$ missing projective directions. 
Let $H_\Lambda$ be a nonzero homogeneous binary form of degree $h$ on $\Vbar{\Lambda}$ whose simple zeros in $\PP(\Vbar{\Lambda})$ are precisely these missing directions. 
The following proposition formalizes this coordinate-free algebraic translation.

\begin{proposition}\label{prop:actual}
As $f$ ranges over the $t$-dimensional space of all binary homogeneous forms of degree $t-1$ on $\Vbar{\Lambda}$, the vectors $\bb^{(f)} \in \F_q^{\cI}$ defined by
\[
 b^{(f)}_i = 0 \quad (i \in \Lambda), \qquad
 b^{(f)}_i = \bigl( H_\Lambda(\bp_i) f(\bp_i) \bigr)^2 \quad (i \notin \Lambda),
\]
form a $t$-dimensional linear subcode $\cD_\Lambda \subseteq \cC^\perp$.
\end{proposition}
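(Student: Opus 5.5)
The plan is to transport Proposition~\ref{prop:dual} back to the original coordinates using the local normalization set up after Lemma~\ref{lem:gradient}. Fix $\Lambda$ and an index $\infty\notin\Lambda$. Choose the basis $\{\alpha,\beta\}$ of $\Ann_{V^*}(U_\Lambda)$, the rescaled columns $\btv_\infty=y^{-1}\bv_\infty$ and $\btv_s=x_i^{-1}\bv_i$, and the locator set $\cH^c$, all as in that discussion. The computation there shows that for every $\eta\in V^*$ the vector $\bz=(\eta(\btv_s)_{s\in\cH^c};\eta(\btv_\infty))$ lies in $E_{\cH}$. Hence each $\bb^{(r)}$ of \eqref{eq:dualrows} yields a dual codeword of $\cC$. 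Concretely, define $w\in\F_q^{\cI}$ by $w_i=0$ for $i\in\Lambda$, $w_i=x_i^{-1}b^{(r)}_{s_i}$ on the affine indices, and $w_\infty=y^{-1}b^{(r)}_\infty$. Then $\sum_i w_i\eta(\bv_i)=\langle \bb^{(r)},\bz\rangle=0$ for every $\eta$, so $w\in\cC^\perp$. The indices in $\Lambda$ contribute nothing because $w$ vanishes there.

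Next I will identify these vectors with the $\bb^{(f)}$. The functionals $\alpha,\beta$ descend to coordinates on $\Vbar{\Lambda}$, since they kill $U_\Lambda$. In these coordinates $\bp_i=x_i(1,s_i)$ for affine $i$ and $\bp_\infty=y(0,1)$. The missing directions are $[(1,\tau)]$ with $\tau\in\cH$, since $[(0,1)]$ is occupied by $\bp_\infty$. Therefore $H_\Lambda(X,Y)=c\prod_{\tau\in\cH}(Y-\tau X)$ with $c\neq0$, which gives $H_\Lambda(1,s)=cP_{\cH}(s)$ up to sign (signs are irrelevant in characteristic two), and $H_\Lambda(0,1)=c$. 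Take $f=X^{t-1-r}Y^r$. Homogeneity then gives
\[
\bigl(H_\Lambda(\bp_i)f(\bp_i)\bigr)^2=c^2x_i^{2(h+t-1)}P_{\cH}(s_i)^2s_i^{2r},
\]
and at $\infty$ the value is $c^2y^{2(h+t-1)}\delta_{r,t-1}$. Since $h+t-1=m-1$, we have $2(h+t-1)=q-2$, so $x^{q-2}=x^{-1}$ and $y^{q-2}=y^{-1}$ for nonzero $x,y$. Thus $\bb^{(f)}=c^2 w$, which is exactly the rescaled vector above, and it lies in $\cC^\perp$. The map $f\mapsto \bb^{(f)}$ is $\F_q$-linear in the coefficients of $f$ only after composing with the inverse Frobenius. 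More precisely, $f\mapsto \bb^{(f)}$ is Frobenius-semilinear: writing $f=\sum_r a_rX^{t-1-r}Y^r$ gives $\bb^{(f)}=\sum_r a_r^2\bb^{(f_r)}$. Because squaring is a bijection on $\F_q$, the image is the $\F_q$-span of the basis vectors, and it is a linear subspace of $\cC^\perp$.

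For dimension $t$, note that the $w$-vectors are obtained from $\bb^{(0)},\dots,\bb^{(t-1)}$ by padding with zeros on $\Lambda$ and multiplying each coordinate by a nonzero scalar. Proposition~\ref{prop:dual} gives their independence, so $\dim\cD_\Lambda=t$; equivalently, the map $f\mapsto\bb^{(f)}$ is injective. The main obstacle is bookkeeping rather than ideas. I must check that $H_\Lambda$ is independent of choices up to a scalar, so that the resulting $\cD_\Lambda$ does not depend on the choice of $\infty$ or of $\alpha,\beta$. I also need to confirm that the exponent identity $2(m-1)=q-2$ exactly cancels the column rescalings, including at the $\infty$ coordinate.
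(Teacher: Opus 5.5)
Your proposal is correct and follows essentially the same route as the paper. You identify $\Vbar{\Lambda}$ with $\F_q^2$ via $\alpha,\beta$, use the degree-$(q-2)$ homogeneity of $(H_\Lambda f_r)^2$ to absorb the column rescalings $x_i^{-1},y^{-1}$, reduce orthogonality to $\langle \bb^{(r)},\bz\rangle=0$ from Proposition~\ref{prop:dual}, and get linearity from Frobenius-semilinearity.

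The differences are minor. The paper normalizes $f_r=c^{-1}T_1^{t-1-r}T_2^r$ to avoid your factor $c^2$. It also proves $\dim\cD_\Lambda=t$ by noting that a nonzero form of degree $t-1<q+1-h$ cannot vanish at every $\bp_i$, whereas you transfer independence from Proposition~\ref{prop:dual}. Your worry about dependence on $\infty,\alpha,\beta$ is moot: $\cD_\Lambda$ is defined intrinsically, and rescaling $H_\Lambda$ only multiplies by a nonzero square.
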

\begin{proof}
We divide the proof into two steps: first establishing that $\cD_\Lambda$ is a $t$-dimensional 
$\F_q$-linear subspace of $\F_q^{\cI}$, and then proving that 
$\cD_\Lambda$ is orthogonal to $\cC$.

Let $\mathcal{F}_{t-1}$ denote the $t$-dimensional $\F_q$-vector space of all 
binary homogeneous forms of degree $t-1$ on the $2$-dimensional quotient space 
$\Vbar{\Lambda}$, and consider the evaluation map $\Psi : \mathcal{F}_{t-1} \to \F_q^{\cI}$ 
given by $f \mapsto \bb^{(f)}$. 
Since $\Psi(f_1 + f_2) = \Psi(f_1) + \Psi(f_2)$ and $\Psi(\lambda f) = \lambda^2 \Psi(f)$ for all $\lambda \in \F_q$, the map $\Psi$ is Frobenius-semilinear; because $\lambda \mapsto \lambda^2$ is an automorphism of $\F_q$, the image $\cD_\Lambda = \Psi(\mathcal{F}_{t-1})$ is an $\F_q$-linear subspace of $\F_q^{\cI}$.
To determine $\dim_{\F_q} \cD_\Lambda$, suppose that $f \in \mathcal{F}_{t-1}$ 
satisfies $\bb^{(f)} = \mathbf{0}$. Then for every $i \notin \Lambda$,
$
H_\Lambda(\bp_i) f(\bp_i) = 0.
$
By definition, the zeros of $H_\Lambda$ in $\PP(\Vbar{\Lambda})$ lie strictly in the complementary set $\PP(\Vbar{\Lambda}) \setminus \{[\bp_i] : i \notin \Lambda\}$. Hence $H_\Lambda(\bp_i) \ne 0$ for all $i \notin \Lambda$, which forces
$
 f(\bp_i) = 0
$
for all $i \notin \Lambda$.
Consequently, the binary homogeneous form $f$ of degree $t-1$ vanishes at the $q+1-h$ distinct projective points $\{[\bp_i] : i \notin \Lambda\} \subseteq \PP(\Vbar{\Lambda})$. Since $t = m - h$ and $m = q/2 \ge 4$, we have
\[
 \deg f = t - 1 = m - h - 1 < 2m + 1 - h = q + 1 - h,
\]
forcing $f = 0$. Therefore, the Frobenius-semilinear map $\Psi$ is injective, yielding
$
 \dim_{\F_q} \cD_\Lambda = \dim_{\F_q} \mathcal{F}_{t-1} = t.
$

We now turn to the next step of the proof. 
We identify $\Vbar{\Lambda}$ with $\F_q^2$ using the basis 
$\{\alpha, \beta\}$ of $\Ann_{V^*}(U_\Lambda)$ fixed prior to the proposition, 
and relate $H_\Lambda$ to the monic vanishing polynomial 
$P_{\cH}(X) = \prod_{\tau \in \cH}(X - \tau)$.
Since $\alpha$ and $\beta$ vanish on $U_\Lambda$, the map
\[
 \bp = \bv + U_\Lambda \longmapsto (T_1, T_2) = \bigl(\alpha(\bv), 
 \beta(\bv)\bigr) \qquad (\bv \in V)
\]
is a well-defined linear isomorphism $\Vbar{\Lambda} \xrightarrow{\sim} \F_q^2$ with homogeneous coordinates $(T_1, T_2)$.
By the column normalization established prior to the proposition, the projected columns $\bp_i = \bv_i + U_\Lambda$ for $i \notin \Lambda$ take the form:
\begin{equation}\label{eq:proj_coords}
 \bp_\infty = (0, y) = y(0, 1), \qquad
 \bp_i = (x_i, x_i s_i) = x_i(1, s_i) \quad \text{for } i \in \cI \setminus (\Lambda \cup \{\infty\}),
\end{equation}
where $y = \beta(\bv_\infty) \in \F_q^*$, $x_i = \alpha(\bv_i) \in \F_q^*$, and $s_i = x_i^{-1}\beta(\bv_i)$ ranges bijectively over $\cH^c = \F_q \setminus \cH$.
In $\PP(\Vbar{\Lambda})$, the $q+1-h$ occupied points $\{[\bp_i] : i \notin \Lambda\}$ are $[0:1]$ and $\{[1:s] : s \in \cH^c\}$, so the $h$ missing projective directions are precisely $\{[1:\tau] : \tau \in \cH\}$.
Since each point $[1:\tau]$ is defined by $T_2 - \tau T_1 = 0$, the degree-$h$ homogeneous form $H_\Lambda(T_1, T_2)$ vanishing at these $h$ points must take the form
\[
 H_\Lambda(T_1, T_2) = c \prod_{\tau \in \cH} (T_2 - \tau T_1) \quad \text{for some } c \in \F_q^*.
\]
Evaluating $H_\Lambda$ on the normalized direction vectors $(0,1)$ and $(1,s)$ for $s \in \cH^c$ yields
\begin{equation}\label{eq:H_eval}
 H_\Lambda(0, 1) = c \prod_{\tau \in \cH} 1 = c, \qquad
 H_\Lambda(1, s) = c \prod_{\tau \in \cH} (s - \tau) = c P_{\cH}(s).
\end{equation}

\smallskip\noindent
Next, we choose a scaled monomial basis of $\mathcal{F}_{t-1}$ so that the squared product $(H_\Lambda f_r)^2$ is homogeneous of degree $q-2$, which turns scalar multiplication by $\lambda \in \F_q^*$ into multiplication by $\lambda^{-1}$.
Define the basis $\{f_0, f_1, \dots, f_{t-1}\}$ of $\mathcal{F}_{t-1}$ by
$
 f_r(T_1, T_2) = c^{-1} T_1^{t-1-r} T_2^r
$
for 
$0 \le r \le t-1$, 
and set $G_r(T_1, T_2) = \bigl(H_\Lambda(T_1, T_2) f_r(T_1, T_2)\bigr)^2$. Since $\deg H_\Lambda = h$, $\deg f_r = t-1$, and $h + t = m = q/2$, the form $G_r$ is homogeneous of degree
\[
 \deg G_r = 2(h + t - 1) = 2(m - 1) = q - 2.
\]
Because $\lambda^{q-2} = \lambda^{-1}$ for every $\lambda \in \F_q^*$, homogeneity of $G_r$ implies that for all $\lambda \in \F_q^*$ and $(T_1, T_2) \in \F_q^2$,
\begin{equation}\label{eq:homog_inv}
 G_r(\lambda T_1, \lambda T_2) = \lambda^{q-2} G_r(T_1, T_2) = \lambda^{-1} \bigl(H_\Lambda(T_1, T_2) f_r(T_1, T_2)\bigr)^2.
\end{equation}

\smallskip\noindent
We now evaluate the components $b^{(f_r)}_i = G_r(\bp_i)$ for $i \notin \Lambda$ by combining \eqref{eq:proj_coords}, \eqref{eq:H_eval}, and \eqref{eq:homog_inv}, and match them with the basis vectors $\bb^{(r)} = \bigl((b^{(r)}_s)_{s \in \cH^c}; b^{(r)}_\infty\bigr) \in E_{\cH}^\perp$ defined in \eqref{eq:dualrows}:
\begin{itemize}
    \item For $i = \infty$, substituting $\bp_\infty = y(0, 1)$ into \eqref{eq:homog_inv} and using \eqref{eq:H_eval} gives
    \begin{equation}\label{eq:b_inf}
     b^{(f_r)}_\infty = G_r\bigl(y(0, 1)\bigr)
     = y^{-1} \bigl(H_\Lambda(0, 1) f_r(0, 1)\bigr)^2
     = y^{-1} \bigl(c \cdot c^{-1} 0^{t-1-r} 1^r\bigr)^2
     = y^{-1} b^{(r)}_\infty,
    \end{equation}
    where $b^{(r)}_\infty = 0$ for $0 \le r \le t-2$ and $b^{(t-1)}_\infty = 1$.
    \item For $i \in \cI \setminus (\Lambda \cup \{\infty\})$ with $s_i = s \in \cH^c$, substituting $\bp_i = x_i(1, s)$ into \eqref{eq:homog_inv} and using \eqref{eq:H_eval} gives
    \begin{equation}\label{eq:b_aff}
     b^{(f_r)}_i = G_r\bigl(x_i(1, s)\bigr)
     = x_i^{-1} \bigl(H_\Lambda(1, s) f_r(1, s)\bigr)^2
     = x_i^{-1} \bigl(c P_{\cH}(s) \cdot c^{-1} s^r\bigr)^2
     = x_i^{-1} P_{\cH}(s)^2 s^{2r}
     = x_i^{-1} b^{(r)}_s.
    \end{equation}
\end{itemize}

\smallskip\noindent

Finally, we show that $\bb^{(f_r)}$ is orthogonal to every codeword $\mathbf{c} = (\eta(\bv_i))_{i \in \cI} \in \cC$ ($\eta \in V^*$).
By the derivation preceding the proposition, the evaluation vector $\bz = \bigl((z_s)_{s \in \cH^c}; z_\infty\bigr)$ on the normalized columns $\btv_s = x_i^{-1}\bv_i$ ($s_i = s$) and $\btv_\infty = y^{-1}\bv_\infty$, defined by
\[
 z_s = \eta(\btv_s) = x_i^{-1}\eta(\bv_i) \quad (s \in \cH^c), \qquad
 z_\infty = \eta(\btv_\infty) = y^{-1}\eta(\bv_\infty),
\]
belongs to $E_{\cH}$. Equivalently, the unnormalized codeword components satisfy
\begin{equation}\label{eq:code_unnorm}
 \eta(\bv_i) = x_i z_s \quad (s_i = s \in \cH^c), \qquad
 \eta(\bv_\infty) = y z_\infty.
\end{equation}
Since $b^{(f_r)}_i = 0$ for all $i \in \Lambda$, substituting \eqref{eq:b_inf}, \eqref{eq:b_aff}, and \eqref{eq:code_unnorm} into the inner product over $\cI$ cancels the scaling factors $x_i$ and $y$:
\[
\begin{aligned}
 \langle \bb^{(f_r)}, \mathbf{c} \rangle
 &= \sum_{i \in \cI \setminus (\Lambda \cup \{\infty\})} b^{(f_r)}_i \eta(\bv_i) + b^{(f_r)}_\infty \eta(\bv_\infty) \\
 &= \sum_{s \in \cH^c} \bigl(x_i^{-1} b^{(r)}_s\bigr)(x_i z_s) + \bigl(y^{-1} b^{(r)}_\infty\bigr)(y z_\infty) \\
 &= \sum_{s \in \cH^c} b^{(r)}_s z_s + b^{(r)}_\infty z_\infty \\
 &= \langle \bb^{(r)}, \bz \rangle = 0,
\end{aligned}
\]
where the last equality holds by Proposition~\ref{prop:dual} since $\bb^{(r)} \in E_{\cH}^\perp$ and $\bz \in E_{\cH}$.
Because $\mathbf{c} \in \cC$ is arbitrary, $\bb^{(f_r)} \in \cC^\perp$ for all $0 \le r \le t-1$. Since $\{f_0, \dots, f_{t-1}\}$ spans $\mathcal{F}_{t-1}$ and the map $f \mapsto \bb^{(f)}$ is Frobenius-semilinear, we conclude that $\cD_\Lambda \subseteq \cC^\perp$.
\end{proof}

A subscript
$d$ on a polynomial ring denotes its space of homogeneous forms of
degree $d$, with this space understood to be zero for $d<0$.
Fix $\Lambda_0\subset\cI$ with $|\cI|=q+2$ and $|\Lambda_0|=h=k-3$, 
and let $\Lambda_0^c=\cI\setminus\Lambda_0$.
Define
\[
 \operatorname{Short}_{\Lambda_0}(\cC^\perp)
 =\{\bb\in\cC^\perp:b_i=0\text{ for every }i\in\Lambda_0\}.
\]
Proposition~\ref{prop:actual} gives
\begin{equation}\label{eq:common-shortening}
 \sum_{a\in\Lambda_0^c}\cD_{\Lambda_0\cup\{a\}}
 \subseteq\operatorname{Short}_{\Lambda_0}(\cC^\perp),
\end{equation}
since every summand vanishes on $\Lambda_0$. 
Consider the coordinate projection map 
$\cC^\perp \longrightarrow \F_q^{\Lambda_0}$ defined by 
$\bb \longmapsto (b_i)_{i\in\Lambda_0}$, which has rank $h$. 
Since the kernel of this linear mapping corresponds precisely to the shortened code over 
$\Lambda_0$, the rank-nullity theorem yields
$\dim\operatorname{Short}_{\Lambda_0}(\cC^\perp) = q-2h-1$.
Let
\[
 \Vbar{\Lambda_0}=V/U_{\Lambda_0},\qquad
 \overline{\bv}_i=\bv_i+U_{\Lambda_0}\quad(i\in\Lambda_0^c),\qquad
 \overline{\cA}=\{\overline{\bv}_i:i\in\Lambda_0^c\}.
\]
Then $\dim\Vbar{\Lambda_0}=3$ and 
$\overline{\cA}$ is a planar arc of size $q+2-h$.
For each fixed $a\in\Lambda_0^c$, retain the notation as before
\[
 \bp_i=\bv_i+U_{\Lambda_0\cup\{a\}}
 \in\Vbar{\Lambda_0\cup\{a\}}\quad(i\in\Lambda_0^c\setminus\{a\}),
\]
and let $\mathcal F_d$ be the space of degree-$d$ binary forms on
this quotient. Deleting the zero coordinates in $\Lambda_0$,
Proposition~\ref{prop:actual} reads
\begin{equation}\label{eq:planar-canonical-code}
 \begin{split}
 \cD_{\Lambda_0\cup\{a\}}
 &=\{\bb^{(f)}:f\in\mathcal F_{t-1}\}
 \subseteq\F_q^{\Lambda_0^c},\\
 b_a^{(f)}&=0,\qquad
 b_i^{(f)}=
 \bigl(H_{\Lambda_0\cup\{a\}}(\bp_i)f(\bp_i)\bigr)^2
 \quad(i\ne a).
 \end{split}
\end{equation}
Deleting these identically zero coordinates preserves dimensions
and sums; we use the same notation for the resulting subcodes.
Choose coordinates $(X_1,X_2,X_3)$ on $(\Vbar{\Lambda_0})^*$ and define
$
 L_a(\phi)=\phi(\overline{\bv}_a)
 $
 for
 $
a\in\Lambda_0^c.
$
By the planar arc property of $\overline{\cA}$, the linear forms $L_a$ are pairwise nonproportional, meaning that $L_a \notin \F_q^* L_b$ (or $\PP(\ker L_a) \ne \PP(\ker L_b)$) 
for all distinct $a, b \in \Lambda_0^c$, and the isomorphism 
$(\Vbar{\Lambda_0})^* \cong \Ann_{V^*}(U_{\Lambda_0})$ identifies
\[
 \ker L_a = \bigl\{\phi \in (\Vbar{\Lambda_0})^* : \phi(\overline{\bv}_a) 
 = 0\bigr\} \cong \Ann_{V^*}(U_{\Lambda_0\cup\{a\}}).
\]
Under this identification, let $M_{\Lambda_0\cup\{a\}} = 
\bigl\{ [\phi] \in \PP(\ker L_a) : \phi(\overline{\bv}_i) \ne 0 \text{ for all } i \in \Lambda_0^c \setminus \{a\} \bigr\} = \bigl\{ [\phi_{a,1}], \ldots, [\phi_{a,h}] \bigr\}$ be the $h$ 
missing dual points from Section~2, and write
\[
 M_{a,j} = M_{\Lambda_0\cup\{a\},j} : \ker L_a \longrightarrow \F_q, 
 \qquad \ker M_{a,j} = \Span_{\F_q}\{\phi_{a,j}\} \qquad (1 \le j \le h)
\]
for the corresponding nonzero binary linear forms cutting out these 
missing points on $\ker L_a$.
We identify polynomial restriction to $\ker L_a$ with reduction
modulo $L_a$.
For $\bz=(z_i)_{i\in\Lambda_0^c}$, define
\begin{equation}\label{eq:W-definition}
 W_{\bz}=\sum_{i\in\Lambda_0^c}z_iL_i^{q-2}
 \in\F_q[X_1,X_2,X_3]_{q-2}.
\end{equation}

With the global polynomial $W_{\bz}$ constructed, we now establish a crucial algebraic criterion. The following lemma demonstrates that the condition of orthogonality to the local canonical subcode $\cD_{\Lambda_0\cup\{a\}}$ translates cleanly into an explicit subspace membership test for the restricted polynomial $W_{\bz} \bmod L_a$.

\begin{lemma}\label{lem:coordinates}
Let $a\in\Lambda_0^c$ and $\bz\in\F_q^{\Lambda_0^c}$. The vector $\bz$ belongs 
to the dual space $\cD_{\Lambda_0\cup\{a\}}^\perp$ if and only if
$
 W_{\bz}\bmod L_a\in \Span_{\F_q}\{M_{a,1}^{q-2},\ldots,M_{a,h}^{q-2}\}.
$
Furthermore, the $h$ polynomials $M_{a,1}^{q-2}, \ldots, M_{a,h}^{q-2}$ generating this span are linearly independent.
\end{lemma}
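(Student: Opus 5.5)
The plan is to work entirely on the $2$-dimensional space $\ker L_a\cong\Ann_{V^*}(U_{\Lambda_0\cup\{a\}})$. Its dual is the quotient $\Vbar{\Lambda_0\cup\{a\}}$, where the points $\bp_i$, the form $H_{\Lambda_0\cup\{a\}}$ and the forms $f\in\mathcal F_{t-1}$ live. The key structural fact is that in characteristic two the exponent $q-2$ is twice $m-1$, and $m-1=2^{e-1}-1$ has all binary digits equal to $1$. I will reduce everything to degree $m-1$ by extracting square roots, and then use an apolarity pairing in degree $m-1$ whose binomial weights are all odd.

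\textbf{Step 1: square roots.} For $i\ne a$, the restriction $L_i|_{\ker L_a}$ is the linear form $\phi\mapsto\phi(\bp_i)$, i.e.\ evaluation at $\bp_i$. Write $\ell_{\bp}$ for evaluation at a vector $\bp$. Since $L_a\bmod L_a=0$, we get
\[
W_{\bz}\bmod L_a=\sum_{i\ne a}z_i\,\ell_{\bp_i}^{q-2}=\Bigl(\sum_{i\ne a}\sqrt{z_i}\,\ell_{\bp_i}^{m-1}\Bigr)^2=:w_{\bz}^2 .
\]
Here the square root is taken coefficientwise, which is legitimate because Frobenius is additive and bijective on $\F_q$. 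Likewise, the orthogonality condition with the basis of \eqref{eq:planar-canonical-code} reads
\[
\sum_{i\ne a}z_i\bigl(H f\bigr)(\bp_i)^2=\Bigl(\sum_{i\ne a}\sqrt{z_i}\,(Hf)(\bp_i)\Bigr)^2 .
\]
So $\bz\in\cD_{\Lambda_0\cup\{a\}}^\perp$ if and only if $\sum_{i\ne a}\sqrt{z_i}\,(Hf)(\bp_i)=0$ for all $f\in\mathcal F_{t-1}$. Note that $Hf$ has degree $h+t-1=m-1$.

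\textbf{Step 2: the pairing.} Let $S_{m-1}$ and $S^*_{m-1}$ denote the degree-$(m-1)$ forms on $\ker L_a$ and on the quotient, respectively. In dual coordinates, pair monomials by $\langle X^jY^{m-1-j},T_1^{j'}T_2^{m-1-j'}\rangle=\delta_{jj'}$. This is a perfect pairing $S_{m-1}\times S^*_{m-1}\to\F_q$. Since every binomial $\binom{m-1}{j}$ is odd, we have $\ell_{\bp}^{m-1}=\sum_j p_1^jp_2^{m-1-j}X^jY^{m-1-j}$, and therefore $\langle \ell_{\bp}^{m-1},g\rangle=g(\bp)$. Extending linearly gives $\langle w_{\bz},Hf\rangle=\sum_{i\ne a}\sqrt{z_i}\,(Hf)(\bp_i)$. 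Hence $\bz\in\cD^\perp_{\Lambda_0\cup\{a\}}$ if and only if $w_{\bz}$ lies in the annihilator of $H\cdot\mathcal F_{t-1}$. That subspace has dimension $t$, since multiplication by $H\neq0$ is injective, so the annihilator has dimension $m-t=h$.

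\textbf{Step 3: identifying the annihilator.} Each missing dual point $[\phi_{a,j}]$ corresponds to a missing quotient direction $[\bw_j]$, namely the unique direction with $\phi_{a,j}(\bw_j)=0$. This is because in two dimensions a point and its annihilating functional determine each other, and the occupied directions $[\bp_i]$ correspond exactly to the kernels of the $L_i$. Consequently $M_{a,j}$ is, up to a nonzero scalar, $\ell_{\bw_j}$, and the $[\bw_j]$ are exactly the $h$ distinct zeros of $H$. Then $\langle M_{a,j}^{m-1},Hf\rangle$ is a nonzero multiple of $H(\bw_j)f(\bw_j)=0$, so each $M_{a,j}^{m-1}$ lies in the annihilator. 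The powers $\ell_{\bw_1}^{m-1},\dots,\ell_{\bw_h}^{m-1}$ of $h\le m$ pairwise nonproportional linear forms are linearly independent. This is a Vandermonde argument: after a change of coordinates, their coefficient vectors are rows $(1,\lambda_j,\dots,\lambda_j^{m-1})$, possibly together with $(0,\dots,0,1)$. Hence they span the $h$-dimensional annihilator.

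\textbf{Step 4: squaring back.} Coefficientwise Frobenius is a semilinear bijection $S_{m-1}\to\Span\{X^{2u}Y^{2(m-1-u)}\}$. Under it, $w_{\bz}\in\Span\{M_{a,j}^{m-1}\}$ holds if and only if $w_{\bz}^2\in\Span\{M_{a,j}^{q-2}\}$, since $\sum c_jM_{a,j}^{q-2}=(\sum\sqrt{c_j}M_{a,j}^{m-1})^2$. The same bijection transfers linear independence of the $M_{a,j}^{m-1}$ to the $M_{a,j}^{q-2}$. This proves both assertions of the lemma.

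The main obstacle is Step 3, specifically making the correspondence $M_{a,j}\leftrightarrow\ell_{\bw_j}$ precise. One must check that the missing dual points of Section~2 on $\ker L_a$ correspond exactly to the zeros of $H_{\Lambda_0\cup\{a\}}$ in the quotient, compatibly with the identification $(\Vbar{\Lambda_0})^*\cong\Ann_{V^*}(U_{\Lambda_0})$. I would verify this directly from the definition of $M_{\Lambda}$ and the normalized coordinates of \eqref{eq:proj_coords}.
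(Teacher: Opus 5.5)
Your proposal is correct and is essentially the paper's argument. Both extract square roots using that every $\binom{m-1}{j}$ is odd, reduce the question to the functional $g\mapsto\sum_{i\ne a}z_i^{1/2}g(\bp_i)$ on $\mathcal{F}_{m-1}$, and use that the forms vanishing at the $h$ missing directions are exactly $H_{\Lambda_0\cup\{a\}}\mathcal{F}_{t-1}$. The paper phrases this through the evaluation map $\mathcal{E}_a$, its kernel and its surjectivity, where you use an apolarity pairing, an annihilator dimension count and Vandermonde independence. The correspondence $M_{a,j}\leftrightarrow\bw_{a,j}$ that you flag as the main obstacle is verified in the paper exactly as you sketch.
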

\begin{proof}
Fix an index $a \in \Lambda_0^c$ and equip 
$\Vbar{\Lambda_0\cup\{a\}}$ with a coordinate system $(T_1, T_2)$, 
such that any vector $\bp \in \Vbar{\Lambda_0\cup\{a\}}$ is represented by 
its components $(p_1, p_2)$. By identifying $\ker L_a$ with  
$(\Vbar{\Lambda_0\cup\{a\}})^*$, we assign it the corresponding dual coordinates $(U_1, U_2)$. Consequently, 
$
 \phi(\bp) = p_1U_1 + p_2U_2.
$
Furthermore, restricting the global linear form $L_i$, which is originally defined on 
$(\Vbar{\Lambda_0})^*$,
to the subspace $\ker L_a$ algebraically corresponds to evaluating the functional $\phi$ precisely at the projected column vector $\bp_i \in \Vbar{\Lambda_0\cup\{a\}}$, yielding 
$
 L_i|_{\ker L_a}(\phi) = \phi(\bp_i)
$
for all $i \in \Lambda_0^c \setminus \{a\}$. Here and throughout the proof, all sums indexed by $i \ne a$ are understood to run over $i \in \Lambda_0^c \setminus \{a\}$.

Under $(\ker L_a)^* \cong \Vbar{\Lambda_0\cup\{a\}}$, 
each missing-point linear form $M_{a,j} \in (\ker L_a)^*$ ($1 \le j \le h$) corresponds to a unique nonzero vector $\bw_{a,j} = (w_{a,j,1}, w_{a,j,2}) \in \Vbar{\Lambda_0\cup\{a\}}$ satisfying
\[
 M_{a,j}(\phi) = \phi(\bw_{a,j}) = w_{a,j,1}U_1 + w_{a,j,2}U_2 \qquad (\phi \in \ker L_a).
\]
These vectors $\bw_{a,1}, \ldots, \bw_{a,h}$ represent the $h$ missing quotient directions in $\PP(\Vbar{\Lambda_0\cup\{a\}})$. Indeed, if $[\phi_{a,j}] \in \PP(\ker L_a)$ is the projective zero of $M_{a,j}$, then $\phi_{a,j}(\bw_{a,j}) = M_{a,j}(\phi_{a,j}) = 0$, whereas $\phi_{a,j}(\bp_i) \ne 0$ for every $i \ne a$ by the definition of a missing dual point. This ensures that $[\bw_{a,j}] \ne [\bp_i]$ for every occupied direction $[\bp_i]$. Moreover, because the $h$ missing dual points $[\phi_{a,1}], \ldots, [\phi_{a,h}]$ are pairwise distinct, the linear forms $M_{a,1}, \ldots, M_{a,h}$ are pairwise nonproportional, which implies that the corresponding missing directions $[\bw_{a,1}], \ldots, [\bw_{a,h}]$ are also pairwise distinct. For any scalar $z \in \F_q$, we write $z^{1/2} = z^m$ to denote its unique square root under the inverse Frobenius automorphism of $\F_q$.

Define the monomials $g_r(T_1,T_2) = T_1^{m-1-r}T_2^r \in \mathcal{F}_{m-1}$ for $0 \le r \le m-1$. Homogenizing the identity $(1+T)^{m-1} = \sum_{r=0}^{m-1} T^r$,
for any vector $\bp = (p_1, p_2) \in \Vbar{\Lambda_0\cup\{a\}}$ gives
\[
 (p_1U_1 + p_2U_2)^{m-1} = \sum_{r=0}^{m-1} g_r(\bp) U_1^{m-1-r}U_2^r.
\]
Squaring both sides in characteristic two and noting that $2(m-1) = q-2$, we obtain the polynomial expansion
\begin{equation}\label{eq:binary-power-expansion}
 (p_1U_1+p_2U_2)^{q-2}
 =\sum_{r=0}^{m-1}g_r(\bp)^2U_1^{q-2-2r}U_2^{2r}.
\end{equation}
Applying \eqref{eq:binary-power-expansion} to both $L_i|_{\ker L_a}(\phi) = \phi(\bp_i)$ and $M_{a,j}(\phi) = \phi(\bw_{a,j})$, we can expand the restricted polynomial $W_{\bz} \bmod L_a = \sum_{i \ne a} z_i (L_i|_{\ker L_a})^{q-2}$ and any linear combination $\sum_{j=1}^h \mu_j M_{a,j}^{q-2}$ (with $\mu_1, \ldots, \mu_h \in \F_q$) in the monomial basis of $\F_q[U_1, U_2]_{q-2}$:
\[
 W_{\bz} \bmod L_a = \sum_{r=0}^{m-1} \left(\sum_{i \ne a} z_i g_r(\bp_i)^2\right) U_1^{q-2-2r}U_2^{2r}
\]
and 
\[
 \sum_{j=1}^h \mu_j M_{a,j}^{q-2} = \sum_{r=0}^{m-1} \left(\sum_{j=1}^h \mu_j g_r(\bw_{a,j})^2\right) U_1^{q-2-2r}U_2^{2r}.
 \]
Since the monomials $\{U_1^{q-2-2r}U_2^{2r}\}_{r=0}^{m-1}$ are linearly independent in $\F_q[U_1, U_2]_{q-2}$, the polynomial identity
\[
 W_{\bz}\bmod L_a = \sum_{j=1}^h\mu_jM_{a,j}^{q-2}
\]
holds if and only if the coefficients of each monomial coincide, that is, if and only if
\[
 \sum_{i\ne a}z_i g_r(\bp_i)^2 = \sum_{j=1}^h\mu_j g_r(\bw_{a,j})^2 \qquad \text{for all } 0 \le r \le m-1.
\]
Applying the inverse Frobenius automorphism $z \mapsto z^{1/2}$ to both sides of each equation, this system is equivalent to
\[
 \sum_{i\ne a}z_i^{1/2} g_r(\bp_i) = \sum_{j=1}^h\mu_j^{1/2} g_r(\bw_{a,j}) \qquad \text{for all } 0 \le r \le m-1.
\]
Because the monomials $g_0, \ldots, g_{m-1}$ form a basis for the space $\mathcal{F}_{m-1}$ of degree-$(m-1)$ binary homogeneous forms on $\Vbar{\Lambda_0\cup\{a\}}$, extending by linearity shows that the above relation holds if and only if
\begin{equation}\label{eq:evaluation-functional-identity}
 \sum_{i\ne a}z_i^{1/2}g(\bp_i)
 =\sum_{j=1}^h\mu_j^{1/2}g(\bw_{a,j})
 \qquad\text{for all } g\in\mathcal F_{m-1}.
\end{equation}
Consider the linear multi-point evaluation map at the $h$ missing directions:
\[
 \mathcal E_a:\mathcal F_{m-1}\longrightarrow\F_q^h,
 \qquad g\longmapsto\bigl(g(\bw_{a,1}),\ldots,g(\bw_{a,h})\bigr).
\]
We first verify that $\mathcal{E}_a$ is surjective. For each $1 \le l \le h$, let $L_{\bw_{a,l}} \in \mathcal{F}_1$ be a nonzero linear form whose projective zero is $[\bw_{a,l}]$. For a fixed index $1 \le j \le h$, choose a linear form $\widetilde{L}_j \in \mathcal{F}_1$ that does not vanish at $\bw_{a,j}$, and construct the homogeneous polynomial
\[
 P_j = \widetilde{L}_j^{m-h} \prod_{\substack{1 \le l \le h \\ l \ne j}} L_{\bw_{a,l}} \in \mathcal{F}_{m-1},
\]
which has degree $(m-h) + (h-1) = m-1$. Since the missing directions $[\bw_{a,1}], \ldots, [\bw_{a,h}]$ are pairwise distinct, $P_j$ vanishes at $\bw_{a,l}$ for all $l \ne j$ while evaluating to a nonzero scalar $c_j = P_j(\bw_{a,j}) \in \F_q^*$ at $\bw_{a,j}$. Rescaling $P_j$ by $c_j^{-1}$ yields an element of $\mathcal{F}_{m-1}$ whose image under $\mathcal{E}_a$ is the $j$-th standard basis vector of $\F_q^h$. As $j$ is arbitrary, $\mathcal{E}_a$ is surjective.

Next, we characterize $\ker \mathcal{E}_a$. A binary homogeneous form $g \in \mathcal{F}_{m-1}$ lies in $\ker \mathcal{E}_a$ if and only if $g(\bw_{a,j}) = 0$ for every $1 \le j \le h$, which holds if and only if each linear factor $L_{\bw_{a,j}}$ divides $g$. Since the $h$ missing directions are pairwise distinct, the linear forms $L_{\bw_{a,1}}, \ldots, L_{\bw_{a,h}}$ are pairwise coprime in the unique factorization domain $\F_q[T_1, T_2]$, and their product is a nonzero scalar multiple of the missing-direction form $H_{\Lambda_0\cup\{a\}} \in \mathcal{F}_h$. Consequently, $g \in \ker \mathcal{E}_a$ if and only if $H_{\Lambda_0\cup\{a\}}$ divides $g$ in $\F_q[T_1, T_2]$. Recalling that $t = m - h$, we obtain the exact kernel decomposition
\begin{equation}\label{eq:missing-kernel}
 \ker\mathcal E_a
 =H_{\Lambda_0\cup\{a\}}\mathcal F_{m-1-h}
 =H_{\Lambda_0\cup\{a\}}\mathcal F_{t-1}.
\end{equation}

Define the linear evaluation functional $\ell_{\bz,a} : \mathcal{F}_{m-1} \to \F_q$ associated with the occupied directions by
\[
 \ell_{\bz,a}(g)=\sum_{i\ne a}z_i^{1/2}g(\bp_i) \qquad (g \in \mathcal{F}_{m-1}).
\]
Observe that the right-hand side of \eqref{eq:evaluation-functional-identity} represents the composition of $\mathcal{E}_a$ with a linear functional on $\F_q^h$. Because
$\mathcal{E}_a : \mathcal{F}_{m-1} \to \F_q^h$ is surjective, 
there exist scalars $\mu_1, \ldots, \mu_h \in \F_q$ satisfying \eqref{eq:evaluation-functional-identity} if and only if $\ell_{\bz,a}$ factors through $\mathcal{E}_a$, which holds if and only if $\ell_{\bz,a}$ vanishes identically on $\ker \mathcal{E}_a$.
By the kernel identity \eqref{eq:missing-kernel}, $\ell_{\bz,a}$ vanishes on $\ker \mathcal{E}_a$ 
if and only if $\ell_{\bz,a}(H_{\Lambda_0\cup\{a\}}f) = 0$ for all $f \in \mathcal{F}_{t-1}$,
which, upon squaring both sides, becomes
\[
 \sum_{i\ne a}z_i
 \bigl(H_{\Lambda_0\cup\{a\}}(\bp_i)
 f(\bp_i)\bigr)^2=0
 \qquad\text{for all } f\in\mathcal F_{t-1}.
\]
We have shown that 
$\bz\in \cD_{\Lambda_0\cup\{a\}}^\perp$ if and only if
$
 W_{\bz}\bmod L_a\in \Span_{\F_q}\{M_{a,1}^{q-2},\ldots,M_{a,h}^{q-2}\}.
$

Finally, suppose that $\sum_{j=1}^h \mu_j M_{a,j}^{q-2} = 0$ for some scalars $\mu_1, \ldots, \mu_h \in \F_q$. Setting $\bz = \mathbf{0}$ (so that $W_{\bz} = 0$) in the equivalence leading to \eqref{eq:evaluation-functional-identity} yields
\[
 \sum_{j=1}^h \mu_j^{1/2} g(\bw_{a,j}) = 0 \qquad \text{for all } g \in \mathcal{F}_{m-1}.
\]
Since $\mathcal{E}_a$ is surjective, the evaluation tuple $\bigl(g(\bw_{a,1}), \ldots, g(\bw_{a,h})\bigr)$ ranges over the entire space $\F_q^h$ as $g$ varies over $\mathcal{F}_{m-1}$. This forces $\mu_j^{1/2} = 0$, and hence $\mu_j = 0$, for every $1 \le j \le h$, proving that the $h$ polynomials $M_{a,1}^{q-2}, \ldots, M_{a,h}^{q-2}$ are linearly independent.
\end{proof}

We now pass from a single line $\ker L_a$ to the arrangement of all lines 
for $a \in \Lambda_0^c$.
By the planar arc property of $\overline{\cA}$, any three distinct projected 
columns $\overline{\bv}_a, \overline{\bv}_b, \overline{\bv}_c$ are linearly 
independent and thus form a basis for  $\Vbar{\Lambda_0}$. 
Consequently, 
in the dual projective plane $\PP((\Vbar{\Lambda_0})^*)$, 
we have
$
 \ker L_a \cap \ker L_b \cap \ker L_c = \{\mathbf{0}\} 
 $
for any distinct  $a,b,c \in \Lambda_0^c$.
For a fixed index $a \in \Lambda_0^c$ and any distinct index $b \in \Lambda_0^c \setminus \{a\}$, 
we have
$
 \dim(\ker L_a \cap \ker L_b) = 1.
$
Thus, each other line $\ker L_b$ intersects the subspace $\ker L_a$ 
at a unique projective point $[\ker L_a \cap \ker L_b] \in \PP(\ker L_a)$. 
Furthermore,  the mapping
\[
 b \longmapsto [\ker L_a \cap \ker L_b]
\]
is strictly injective. As $b$ ranges over all other indices in 
$\Lambda_0^c \setminus \{a\}$, the set of occupied directions
$
 \bigl\{ [\ker L_a \cap \ker L_b] \bigm| b \in \Lambda_0^c \setminus \{a\} \bigr\}
$
yields exactly $|\Lambda_0^c| - 1 = q+1-h$ pairwise distinct intersection points on the projective line $\PP(\ker L_a)$. 
Meanwhile, the $h$ pairwise nonproportional forms 
$M_{a,1}, \ldots, M_{a,h}$ define $h$ distinct projective zeros $[\phi_{a,j}] \in \PP(\ker L_a)$. 
By the definition of missing points, these functionals do not vanish on any occupied direction, 
ensuring their zeros are completely disjoint from the intersections. 
Cardinality matching then forces the $h$ remaining rational 
points to be exactly the zeros of $M_{a,1}, \ldots, M_{a,h}$.

Since $q+2-h\ge h+2$, Lemma~\ref{lem:input} applies to
$\overline{\cA}$. 
Let $\Phi_0 \in \F_q[X_1,X_2,X_3]_h$ be the homogeneous 
polynomial associated with $\overline{\cA}$. 
Choose the nonzero scalings of the local factors so that
\begin{equation*}
 \Phi_0|_{\ker L_a}=\prod_{j=1}^h M_{a,j}\ne0
 \qquad(a\in\Lambda_0^c).
\end{equation*}
Define
\[
 J=\prod_{a\in\Lambda_0^c}L_a
 \in\F_q[X_1,X_2,X_3]_{q+2-h}.
\]
Therefore
$
 \Phi_0(\phi)\ne0
$
for any
$0\ne\phi\in\ker L_a\cap\ker L_b$ with $ a\ne b$
and $\gcd(J,\Phi_0)=1$.
Let
\begin{equation*}
 \delta=\sum_{i=1}^3X_i^q\frac{\partial}{\partial X_i},
 \qquad F_0=J\Phi_0\in\F_q[X_1,X_2,X_3]_{q+2}.
\end{equation*}
For $L=\sum_{r=1}^3 c_rX_r$ with $c_r\in\F_q$,
we have
$
 \delta L=L^q.
$
Let $(L)$ denote the principal ideal generated by $L$ in $ \F_q[X_1, X_2, X_3]$. 
As a derivation on $ \F_q[X_1, X_2, X_3]$, the operator $\delta$ strictly satisfies the Leibniz rule:
\[
 \delta(A B) = \delta(A)B + A\delta(B) \qquad \text{for all } A, B \in  \F_q[X_1, X_2, X_3].
\]
Consequently, for any element $P L \in (L)$, applying this rule alongside $\delta L = L^q$ yields
\[
 \delta(P L) = \delta(P)L + P\delta(L) = \delta(P)L + P L^q = L\bigl(\delta(P) + P L^{q-1}\bigr) \in (L).
\]
This algebraic identity proves that $\delta((L)) \subseteq (L)$, meaning $\delta$ stabilizes the ideal $(L)$. Because the ideal is preserved, $\delta$ induces a well-defined derivation on the quotient ring 
$\F_q[X_1, X_2, X_3]/(L).
$
Geometrically, this quotient ring canonically coincides with the polynomial coordinate ring of the $2$-dimensional subspace $\ker L$.
By selecting a basis $\{U_1, U_2\}$ for the dual space $(\ker L)^*$, we establish a canonical ring isomorphism for the coordinate ring:
$
\F_q[X_1, X_2, X_3]/(L) \cong \F_q[U_1, U_2].
$
Let $\widetilde{\delta}$ denote the induced derivation on $\F_q[U_1, U_2]$. Because the 
global identity $\delta(L) = L^q$ holds for any linear form $L \in \F_q[X_1, X_2, X_3]_1$, 
the action of $\widetilde{\delta}$ restricted to the degree-one generators $U_1$ and $U_2$ strictly preserves this property:
$
 \widetilde{\delta}(U_1) = U_1^q $ and $ \widetilde{\delta}(U_2) = U_2^q.
$
In the polynomial ring $\F_q[U_1, U_2]$, any derivation is uniquely determined by its evaluation on the variables $\{U_1, U_2\}$ and can be linearly expanded over the basis of partial differential operators $\{\frac{\partial}{\partial U_1}, \frac{\partial}{\partial U_2}\}$. Substituting the evaluated generators yields the explicit operator expansion:
\[
 \widetilde{\delta} 
 = \widetilde{\delta}(U_1)\frac{\partial}{\partial U_1} + \widetilde{\delta}(U_2)\frac{\partial}{\partial U_2} 
 = U_1^q\frac{\partial}{\partial U_1} + U_2^q\frac{\partial}{\partial U_2}.
\]
Moreover, for any polynomial $P \in \F_q[X_1, X_2, X_3]$, 
applying the derivation $\delta = \sum_{i=1}^3 X_i^q \partial_{X_i}$ iteratively yields
\[
 \delta^2 P 
 = \sum_{j=1}^3 X_j^q \partial_{X_j} \left( \sum_{i=1}^3 X_i^q \partial_{X_i} P \right)
 = \sum_{j=1}^3 \sum_{i=1}^3 X_j^q \left( \partial_{X_j}(X_i^q) \partial_{X_i} P + X_i^q \partial_{X_j} \partial_{X_i} P \right).
\]
This leads to 
\[
 \delta^2 P 
 = \sum_{i=1}^3 X_i^{2q} \partial_{X_i}^2 P + 2\sum_{1 \le i < j \le 3} X_i^q X_j^q 
 \partial_{X_i} \partial_{X_j} P,
\]
giving
$
 \delta^2 P = 0.
$
Recall from Equation~\eqref{eq:W-definition} that $W_{\bz} = \sum_{i \in \Lambda_0^c} z_i L_i^{q-2}$. 
After simple algebraic calculations, we have
\begin{equation*}
 W_{\bz}
 = \sum_{i\in\Lambda_0^c} \left(z_i^{1/2}L_i^{m-1}\right)^2
 = \left(\sum_{i\in\Lambda_0^c}z_i^{1/2}L_i^{m-1}\right)^2,
\end{equation*}
giving
$
 \delta W_{\bz} = 0.
$
With these foundational properties of the derivation $\delta$ in place, we now determine its action on the global product $F_0 = J\Phi_0$ and derive the resulting differential identity for the  polynomial $\Phi_0$.
\begin{lemma}\label{lem:product}
The product $F_0=J\Phi_0$ satisfies
$
 \delta F_0=0
 $
 and 
 $
 \delta\Phi_0=\Phi_0\sum_{a\in\Lambda_0^c}L_a^{q-1}.
$
\end{lemma}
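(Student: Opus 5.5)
The plan is to prove the second identity from the first, and to reduce the first to showing that a single polynomial vanishes. Since each $L_a$ is linear, $\delta L_a=L_a^q$, and the Leibniz rule gives
\[
\delta J=J\sum_{a\in\Lambda_0^c}L_a^{q-1}.
\]
Hence, in characteristic two,
\[
\delta F_0=J\,R,\qquad R:=\delta\Phi_0+\Phi_0\sum_{a\in\Lambda_0^c}L_a^{q-1}.
\]
Because $\F_q[X_1,X_2,X_3]$ is a domain and $J\neq0$, the relation $\delta F_0=0$ holds if and only if $R=0$, and $R=0$ is exactly the second identity. So everything reduces to proving $R=0$, where $R$ is homogeneous of degree $q-1+h$.

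\emph{Step 1: $R$ vanishes on every line.} Fix $a\in\Lambda_0^c$ and restrict to $\ker L_a$, using the induced derivation $\widetilde\delta=U_1^q\partial_{U_1}+U_2^q\partial_{U_2}$.
\begin{itemize}
\item As in the proof of Lemma~\ref{lem:gradient}, applied to the planar arc $\overline{\cA}$, the restriction $\bigl(\Phi_0\prod_{b\ne a}L_b\bigr)|_{\ker L_a}$ has degree $q+1$ and simple zeros at all rational points of $\PP(\ker L_a)$. It is therefore a nonzero multiple of $U_1^qU_2-U_1U_2^q$.
\item A direct computation gives $\widetilde\delta(U_1^qU_2+U_1U_2^q)=2U_1^qU_2^q=0$.
\item Taking the logarithmic derivative on $\ker L_a$, where $L_a^{q-1}\equiv0$, yields $\widetilde\delta(\Phi_0|_{\ker L_a})=\Phi_0|_{\ker L_a}\sum_{b\ne a}L_b^{q-1}|_{\ker L_a}$.
\end{itemize}
These together say $R\equiv0\pmod{L_a}$. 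Since the $L_a$ are pairwise nonproportional, $J\mid R$, so $R=JS$ with $\deg S=2h-3$. In particular $R=0$ already when $h\le1$.

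\emph{Step 2: $R$ vanishes at every rational point.} At an $\F_q$-rational $\phi$ we have $\phi_i^q=\phi_i$. Therefore $\delta P(\phi)=\sum\phi_i\,\partial_iP(\phi)=(\deg P)\,P(\phi)$ by Euler's identity, so
\[
R(\phi)=\Phi_0(\phi)\Bigl(h+\#\{a:L_a(\phi)\neq0\}\Bigr).
\]
If $\phi$ lies on no line $\ker L_a$, the count is $q+2-h\equiv h\pmod 2$, so $R(\phi)=0$. Points on the lines are already covered by Step~1. Hence $S$ vanishes at every rational point off the $q+2-h$ lines. I would also examine $R$ modulo $L_a^2$, using $\delta F_0=L_a^q\cdot(F_0/L_a)+L_a\,\delta(F_0/L_a)$. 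This should give extra vanishing of $S$ along the lines, or at their pairwise intersection points.

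\emph{Step 3 (main obstacle).} The remaining task is to force the low-degree form $S$, of degree $2h-3$, to vanish. The idea is to show that its zero set among rational points (all points off the lines, together with whatever Step 2 yields at intersection points) cannot be contained in a curve of degree $2h-3$ unless $S=0$. For example, I would restrict $S$ to a line that is not one of the $\ker L_a$ and that meets many points off the arrangement: such a line carries more than $2h-3$ zeros, using $h\le m-2$. Doing this along enough lines should kill $S$.

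This counting step is where I expect the difficulty. The open question is whether every line, or enough lines, carries at least $2h-2$ rational points lying on no $\ker L_a$. If that fails, I would fall back on showing directly that $R$ lies in $(L_a^2)$ for all $a$, which gives $J^2\mid R$ and then $R=0$ whenever $2(q+2-h)>q-1+h$. That inequality holds in the range $h\le m-2$.
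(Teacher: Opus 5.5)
Your reduction to $R=0$ and your Step~1 are correct and match the first half of the paper's proof. Indeed, $L_a\mid R$ for every $a$ is the same as $L_a^2\mid\delta F_0$, so you reach $\delta F_0=J^2S$ with $\deg S=2h-3$, which is exactly where the paper stands before its final step.

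The gap is Step~3: you never establish $S=0$, and both routes you sketch break down near the top of the range $h\le m-2$.
\begin{itemize}
\item \textbf{Line restriction.} Pass to the dual plane. The lines $\ker L_a$ correspond to the $q+2-h$ points of the planar arc $\overline{\cA}$, and a line $\ell$ outside the arrangement corresponds to a point $P$ off the arc. The rational points of $\ell$ on no $\ker L_a$ correspond to the external lines through $P$. Let $e_0,e_1,e_2$ count the lines through $P$ meeting the arc in $0,1,2$ points. Then $e_1+2e_2=q+2-h$ and $e_0+e_1+e_2=q+1$, so $e_0=(q+h-e_1)/2\le(q+h)/2$. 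This is smaller than $2h-2$ whenever $3h>q+4$ (for instance $q=32$, $h=14$), so there no line carries enough zeros of $S$.
\item \textbf{Fallback.} The inequality $2(q+2-h)>q-1+h$ is equivalent to $3h<q+5$, which also fails in that regime (for $q=32$, $h=14$ it reads $40>45$). So your assertion that it holds throughout $h\le m-2$ is false. You also give no argument that $R\in(L_a^2)$.
\item \textbf{Step~2.} The rational-point vanishing is only Euler's identity $\delta F_0(\phi)=(q+2)F_0(\phi)=0$. Counting with it alone is inconclusive for forms of degree close to $q$.
\end{itemize}

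The missing idea is to apply $\delta$ once more. In characteristic two, $\delta^2=0$: we have $\partial_{X_j}X_i^q=0$ and $\partial_{X_i}^2=0$, and the mixed terms carry a factor $2$. Also $\delta(J^2)=2J\,\delta J=0$. Applying $\delta$ to $\delta F_0=J^2S$ therefore gives $0=J^2\,\delta S$, hence $\delta S=\sum_i X_i^q\,\partial_{X_i}S=0$.

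Because $\deg S=2h-3<q$, the monomials occurring in $X_i^q\,\partial_{X_i}S$ for different $i$ are disjoint, so every $\partial_{X_i}S$ vanishes. Thus every monomial of $S$ has all exponents even, which is impossible in the odd degree $2h-3$ unless $S=0$. This gives $\delta F_0=0$, and your reduction then yields the formula for $\delta\Phi_0$. No point counting and no information at the intersection points is needed.
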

\begin{proof}
Fix an index $a \in \Lambda_0^c$ and factor $J$ in $\F_q[X_1, X_2, X_3]$ as
\[
 J = L_a \prod_{b \in \Lambda_0^c \setminus \{a\}} L_b,
\]
where $\prod_{b \in \Lambda_0^c \setminus \{a\}} L_b \in \F_q[X_1, X_2, X_3]_{q+1-h}$.
Under the  ring isomorphism $\F_q[X_1, X_2, X_3]/(L_a) \cong \F_q[U_1, U_2]$ on $\ker L_a$, the $q+1-h$ restricted linear forms $\{L_b|_{\ker L_a}\}_{b \in \Lambda_0^c \setminus \{a\}}$ vanish at the $q+1-h$ distinct line intersections $[\ker L_a \cap \ker L_b]$ on $\PP(\ker L_a)$, whereas the $h$ linear forms $\{M_{a,j}\}_{j=1}^h$ vanish at the remaining $h$ rational points. Since $\Phi_0|_{\ker L_a} = \prod_{j=1}^h M_{a,j}$, the restricted product
\[
 \left.\left(\Phi_0 \prod_{b \in \Lambda_0^c \setminus \{a\}} L_b\right)\right|_{\ker L_a}
 = \left(\prod_{j=1}^h M_{a,j}\right) \left(\prod_{b \in \Lambda_0^c \setminus \{a\}} L_b|_{\ker L_a}\right) \in \F_q[U_1, U_2]_{q+1}
\]
is a product of $q+1$ pairwise nonproportional linear forms whose projective zeros sweep over all $q+1$ rational points of $\PP(\ker L_a) \cong \PP^1(\F_q)$. There exists a nonzero scalar $c_a \in \F_q^*$ such that
\[
 \left.\left(\Phi_0 \prod_{b \in \Lambda_0^c \setminus \{a\}} L_b\right)\right|_{\ker L_a} = c_a (U_1^q U_2 + U_1 U_2^q).
\]
Applying the induced derivation $\widetilde{\delta} = U_1^q \frac{\partial}{\partial U_1} + U_2^q \frac{\partial}{\partial U_2}$ on $\F_q[U_1, U_2]$, we obtain
\[
 \widetilde{\delta}\left(\left.\left(\Phi_0 \prod_{b \in \Lambda_0^c \setminus \{a\}} L_b\right)\right|_{\ker L_a}\right)
 = c_a \left( U_1^q \frac{\partial}{\partial U_1}(U_1^q U_2 + U_1 U_2^q) + U_2^q \frac{\partial}{\partial U_2}(U_1^q U_2 + U_1 U_2^q) \right)
  = 0.
\]
Because $\widetilde{\delta}$ is induced by $\delta$ modulo $(L_a)$, it follows that
\[
 \delta\left(\Phi_0 \prod_{b \in \Lambda_0^c \setminus \{a\}} L_b\right) \in (L_a),
\]
so $L_a$ divides $\delta\bigl(\Phi_0 \prod_{b \in \Lambda_0^c \setminus \{a\}} L_b\bigr)$ in $\F_q[X_1, X_2, X_3]$.
Now write $F_0 = L_a \bigl(\Phi_0 \prod_{b \in \Lambda_0^c \setminus \{a\}} L_b\bigr)$. Applying the Leibniz rule and the relation $\delta L_a = L_a^q$ yields
\[
 \delta F_0
 = (\delta L_a)\left(\Phi_0 \prod_{b \in \Lambda_0^c \setminus \{a\}} L_b\right) + L_a \delta\left(\Phi_0 \prod_{b \in \Lambda_0^c \setminus \{a\}} L_b\right)
 = L_a^q \Phi_0 \prod_{b \in \Lambda_0^c \setminus \{a\}} L_b + L_a \delta\left(\Phi_0 \prod_{b \in \Lambda_0^c \setminus \{a\}} L_b\right).
\]
We have $L_a^2 \mid L_a \delta\bigl(\Phi_0 \prod_{b \in \Lambda_0^c \setminus \{a\}} L_b\bigr)$ 
and  $L_a^2 \mid \delta F_0$ for every $a \in \Lambda_0^c$. Because the linear forms $\{L_a\}_{a \in \Lambda_0^c}$ are pairwise nonproportional, their squares $\{L_a^2\}_{a \in \Lambda_0^c}$ are pairwise coprime in the unique factorization domain $\F_q[X_1, X_2, X_3]$. Consequently,
\[
 J^2 = \prod_{a \in \Lambda_0^c} L_a^2 \;\Big|\; \delta F_0,
\]
and  there exists a homogeneous polynomial $Q \in \F_q[X_1, X_2, X_3]$ such that
$
 \delta F_0 = J^2 Q.
$
If $Q \ne 0$, comparing total degrees in $ \delta F_0 = J^2 Q$ 
via $\deg F_0 =  q+2$, $\deg \delta F_0 = 2q+1$, 
and $\deg J^2  = 2q+4-2h$ shows that $Q \in \F_q[X_1, X_2, X_3]_{\deg Q}$ has exact total degree
$
\deg Q = 2h - 3.
$
If $h = 1$, then $\deg Q  < 0$, which immediately forces $Q = 0$. Suppose now that $h \ge 2$. Since $h \le m - 1 = q/2 - 1$,   the degree $\deg Q = 2h-3$ is an odd integer satisfying $1 \le \deg Q < q$. 
Applying $\delta$ to $ \delta F_0 = J^2 Q$ and using $\delta^2 F_0 = 0$, we obtain
$
 0 = \delta^2 F_0 = \delta(J^2)Q + J^2 \delta Q = J^2 \delta Q.
$
It follows that
\begin{equation}\label{eq:delta-Q-zero}
 \delta Q = \sum_{i=1}^3 X_i^q \frac{\partial Q}{\partial X_i} = 0.
\end{equation}
Expand $Q$ in the monomial basis of $\F_q[X_1, X_2, X_3]_{\deg Q}$:
\[
 Q = \sum_{n_1+n_2+n_3=\deg Q} c_{n_1,n_2,n_3} X_1^{n_1} X_2^{n_2} X_3^{n_3} \qquad (c_{n_1,n_2,n_3} \in \F_q).
\]
Because $\deg Q < q$, for each $i \in \{1, 2, 3\}$, every nonzero monomial occurring in $X_i^q \frac{\partial Q}{\partial X_i}$ has $X_i$-degree $n_i - 1 + q \ge q$ and $X_j$-degree $n_j \le \deg Q < q$ for all $j \ne i$. Hence, for distinct indices $i \ne j$, the sets of monomials appearing in $X_i^q \frac{\partial Q}{\partial X_i}$ and $X_j^q \frac{\partial Q}{\partial X_j}$ are disjoint. Equation~\eqref{eq:delta-Q-zero} therefore requires each summand $X_i^q \frac{\partial Q}{\partial X_i}$ to vanish individually. 
We have
\[
 \frac{\partial Q}{\partial X_i} = \sum_{n_1+n_2+n_3=\deg Q} n_i c_{n_1,n_2,n_3} X_1^{n_1} \cdots X_i^{n_i-1} \cdots X_3^{n_3} = 0 \qquad (1 \le i \le 3).
\]
By the linear independence of monomials in $\F_q[X_1, X_2, X_3]$, we have 
$n_i c_{n_1,n_2,n_3} = 0$ in $\F_q$ for all $1 \le i \le 3$ and all triples $(n_1, n_2, n_3)$ 
with $n_1 + n_2 + n_3 = \deg Q$. Since $n_1 + n_2 + n_3 = 2h-3$ is odd, 
every such triple $(n_1, n_2, n_3)$ contains at least one odd integer $n_i$, 
for which $n_i = 1$ in $\F_q$. This forces $c_{n_1,n_2,n_3} = 0$ for all $(n_1, n_2, n_3)$, 
proving that $Q = 0$ and thus
$
 \delta F_0 = 0.
$

Finally, applying the Leibniz rule and the relation $\delta L_a = L_a^q$ to $J = \prod_{a \in \Lambda_0^c} L_a$ gives
\[
 \delta J = \sum_{a \in \Lambda_0^c} (\delta L_a) \prod_{b \in \Lambda_0^c \setminus \{a\}} L_b = \sum_{a \in \Lambda_0^c} L_a^q \prod_{b \in \Lambda_0^c \setminus \{a\}} L_b = J \sum_{a \in \Lambda_0^c} L_a^{q-1}.
\]
Substituting this into $0 = \delta F_0 = \delta(J \Phi_0)$ via the Leibniz rule yields
\[
 0 = (\delta J)\Phi_0 + J(\delta \Phi_0) = J\left(\delta \Phi_0 + \Phi_0 \sum_{a \in \Lambda_0^c} L_a^{q-1}\right),
\]
yielding
$
 \delta \Phi_0 = \Phi_0 \sum_{a \in \Lambda_0^c} L_a^{q-1}.
$
\end{proof}

To lift a compatible system of local binary forms defined on a planar line arrangement into a single global ternary form, we establish 
the following algebraic reconstruction lemma. The result demonstrates that such a global polynomial is uniquely determined provided its total degree is strictly less than the number of lines.

\begin{lemma}\label{lem:glue}
Let $\ell_1,\ldots,\ell_N\in\F_q[X_1,X_2,X_3]_1 \setminus \{0\}$ such that 
$\ker \ell_i \neq \ker \ell_j$ for all $1 \le i < j \le N$, and 
$\ker \ell_i \cap \ker \ell_j \cap \ker \ell_k = \{0\}$ for all $1 \le i < j < k \le N$. 
Let $0\le d<N$,
and let $g_i$ be a homogeneous binary form of degree $d$ on $\ker\ell_i$.
Suppose that $g_i$ and $g_j$ have the same restriction to
$\ker\ell_i\cap\ker\ell_j$ whenever $i\ne j$. Then there exists a unique
$g\in\F_q[X_1,X_2,X_3]_d$ such that
$
 g|_{\ker\ell_i}=g_i
$
for all $1\le i\le N.$
\end{lemma}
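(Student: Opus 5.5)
Uniqueness and existence will be handled separately, and both will come from one vanishing principle. A ternary form $u$ of degree $e$ that vanishes on more than $e$ of the lines $\ker\ell_i$ must be zero. Indeed, if $u|_{\ker\ell_i}=0$, then $\ell_i\mid u$, because restriction to $\ker\ell_i$ is reduction modulo $(\ell_i)$. The $\ell_i$ are pairwise nonproportional, hence pairwise coprime in the UFD $\F_q[X_1,X_2,X_3]$. So $u$ is divisible by the product of more than $e$ of them, which is impossible unless $u=0$. Uniqueness follows at once: two lifts of degree $d<N$ differ by a form vanishing on all $N$ lines.

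For existence I would induct on the number of lines already matched. Step $r$ produces $G_r\in\F_q[X_1,X_2,X_3]_d$ with $G_r|_{\ker\ell_i}=g_i$ for $i\le r$. Start with $G_0=0$; the case $r=1$ is trivial by lifting $g_1$ along the surjection $\F_q[X]_d\to\F_q[X]_d/(\ell_1)_{d}$. Given $G_r$ with $r<N$ and $r\le d$, look at the discrepancy $e_{r+1}=g_{r+1}-G_r|_{\ker\ell_{r+1}}$, a binary form of degree $d$ on $\ker\ell_{r+1}$. By the compatibility hypothesis, $e_{r+1}$ vanishes at the $r$ points $P_i=[\ker\ell_i\cap\ker\ell_{r+1}]$ for $i\le r$, since $G_r$ and $g_{r+1}$ restrict to $g_i$ there.

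These $r$ points are pairwise distinct by the triple-intersection hypothesis. Hence $e_{r+1}$ is divisible by $\prod_{i\le r}\ell_i|_{\ker\ell_{r+1}}$, giving $e_{r+1}=\bigl(\prod_{i\le r}\ell_i\bigr)\big|_{\ker\ell_{r+1}}\cdot c$ with $c$ binary of degree $d-r$. Lift $c$ to a ternary form $\tilde c$ of degree $d-r$ and set $G_{r+1}=G_r+\tilde c\prod_{i\le r}\ell_i$. This preserves the restrictions to $\ker\ell_1,\dots,\ker\ell_r$ and corrects the restriction to $\ker\ell_{r+1}$.

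The procedure works as long as $r\le d$; note that $e_{r+1}$ vanishing at $r$ points while having degree $d$ needs no condition when $r\le d$. When $r=d+1\le N$, a degree-$d$ binary form vanishing at $d+1$ distinct points is zero. So from that stage on every discrepancy $e_{r+1}$ vanishes automatically, and $G_{d+1}$ already restricts correctly to all remaining lines $\ker\ell_{r+1}$ with $r\ge d+1$. This is exactly where $d<N$ enters: it guarantees that the $d+1$ lines used in the construction exist. The remaining lines are then handled by the count of $d+1$ agreement points on each.

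The main point to check is that step, namely that for $j>d+1$ the form $G_{d+1}|_{\ker\ell_j}-g_j$ vanishes at the $d+1$ distinct points $[\ker\ell_i\cap\ker\ell_j]$ for $i\le d+1$. It does, because $G_{d+1}|_{\ker\ell_i}=g_i$ and $g_i,g_j$ agree on $\ker\ell_i\cap\ker\ell_j$, so the difference has degree $d$ with $d+1$ projective zeros. The distinctness of these points requires the triple condition, and it must be verified over $\F_q$-rational points, which holds since all the lines are defined over $\F_q$.
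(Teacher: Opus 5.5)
Your proposal is correct and follows essentially the same route as the paper. Uniqueness comes from divisibility of the difference by $\prod_i \ell_i$ plus a degree count; existence comes by induction, correcting the residual on $\ker\ell_{r+1}$ by a lift of $e_{r+1}/\prod_{i\le r}\ell_i|_{\ker\ell_{r+1}}$ multiplied by $\prod_{i\le r}\ell_i$, and your observation that residuals vanish automatically once $d+1$ lines are matched is exactly the paper's Case~2 ($j-1>d$). One small correction to your commentary: the hypothesis $d<N$ is needed only for uniqueness, not existence, since if $N\le d+1$ your induction simply runs through all $N$ lines without reaching the automatic-vanishing stage.
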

\begin{proof}
For each $1 \le j \le N$,  we can extend $\{\ell_j\}$ to a basis 
$\{U_{j,1}, U_{j,2}, \ell_j\}$ of $\F_q[X_1, X_2, X_3]_1$. 
Restricting to the $2$-dimensional subspace $\ker \ell_j$ annihilates $\ell_j$, 
so the restricted linear forms
\[
 \overline{U}_{j,1} = U_{j,1}|_{\ker \ell_j}, \qquad \overline{U}_{j,2} = U_{j,2}|_{\ker \ell_j}
\]
form a basis of $(\ker \ell_j)^*$, identifying the coordinate ring of $\ker \ell_j$ with $\F_q[\overline{U}_{j,1}, \overline{U}_{j,2}]$.
We construct $g$ by induction on the number of lines. For the base case $j = 1$, expand the given homogeneous binary form $g_1 \in \F_q[\overline{U}_{1,1}, \overline{U}_{1,2}]_d$ in the monomial basis of $(\ker \ell_1)^*$ as
\[
 g_1 = \sum_{m=0}^d a_m \overline{U}_{1,1}^{d-m} \overline{U}_{1,2}^m = \sum_{m=0}^d a_m \bigl(U_{1,1}|_{\ker \ell_1}\bigr)^{d-m} \bigl(U_{1,2}|_{\ker \ell_1}\bigr)^m \qquad (a_m \in \F_q).
\]
Define the homogeneous polynomial $g^{(1)} \in \F_q[X_1, X_2, X_3]_d$ by
$
 g^{(1)} = \sum_{m=0}^d a_m U_{1,1}^{d-m} U_{1,2}^m.
$
Restricting $g^{(1)}$ to $\ker \ell_1$ immediately yields
\[
 g^{(1)}|_{\ker \ell_1} = \sum_{m=0}^d a_m \bigl(U_{1,1}|_{\ker \ell_1}\bigr)^{d-m} \bigl(U_{1,2}|_{\ker \ell_1}\bigr)^m = g_1.
\]
Now let $2 \le j \le N$, and suppose inductively that we have constructed 
$g^{(j-1)} \in \F_q[X_1, X_2, X_3]_d$ satisfying
$
 g^{(j-1)}|_{\ker \ell_i} = g_i
$
for all $1 \le i < j$.
Define the residual homogeneous binary form of degree $d$ on $\ker \ell_j$ by
\[
 r_j = g_j - g^{(j-1)}|_{\ker \ell_j} \in \F_q[\overline{U}_{j,1}, \overline{U}_{j,2}]_d.
\]
For every $1 \le i < j$, 
applying the inductive hypothesis $g^{(j-1)}|_{\ker \ell_i} = g_i$ 
and the compatibility condition 
$g_j|_{\ker \ell_i \cap \ker \ell_j} = g_i|_{\ker \ell_i \cap \ker \ell_j}$ gives
$
 r_j|_{\ker \ell_i \cap \ker \ell_j} 
 = 0.
$
For each $1 \le i < j$, if $\ell_i|_{\ker \ell_j} = 0$, then $\ker \ell_j \subseteq \ker \ell_i$, which forces $\ker \ell_i = \ker \ell_j$, contradicting $\ker \ell_i \ne \ker \ell_j$. Thus $\ell_i|_{\ker \ell_j} \in (\ker \ell_j)^* \setminus \{0\}$ with $1$-dimensional kernel
$
 \ker\bigl(\ell_i|_{\ker \ell_j}\bigr) = \ker \ell_i \cap \ker \ell_j.
$
Extend $\ell_i|_{\ker \ell_j}$ to a basis $\{W_1, W_2\}$ of $(\ker \ell_j)^*$ with $W_1 = \ell_i|_{\ker \ell_j}$, and let $\{\mathbf{w}_1, \mathbf{w}_2\}$ be the dual basis of $\ker \ell_j$ satisfying $W_\alpha(\mathbf{w}_\beta) = \delta_{\alpha\beta}$, so that $\ker \ell_i \cap \ker \ell_j = \Span_{\F_q}\{\mathbf{w}_2\}$. Expanding $r_j$ in the basis $\{W_1, W_2\}$ gives
\[
 r_j = \sum_{m=0}^d c_m W_1^{d-m} W_2^m = W_1 \left(\sum_{m=0}^{d-1} c_m W_1^{d-1-m} W_2^m\right) + c_d W_2^d \qquad (c_m \in \F_q).
\]
Evaluating at $\mathbf{w}_2 \in \ker \ell_i \cap \ker \ell_j$ via 
$
 r_j|_{\ker \ell_i \cap \ker \ell_j} 
 = 0
$ yields $0 = r_j(\mathbf{w}_2) = c_d$, and hence
\[
 r_j = \bigl(\ell_i|_{\ker \ell_j}\bigr) \sum_{m=0}^{d-1} c_m W_1^{d-1-m} W_2^m,
\]
proving that $\ell_i|_{\ker \ell_j}$ divides $ r_j$ in $\F_q[\overline{U}_{j,1}, \overline{U}_{j,2}]$. 
Moreover, 
 $\ell_1|_{\ker \ell_j}, \ldots, \ell_{j-1}|_{\ker \ell_j}$ are pairwise nonproportional and hence pairwise coprime in the unique factorization domain $\F_q[\overline{U}_{j,1}, \overline{U}_{j,2}]$. Consequently,
\begin{equation}\label{eq:residual-divisibility}
 \prod_{i=1}^{j-1} \bigl(\ell_i|_{\ker \ell_j}\bigr) \;\Big|\; r_j.
\end{equation}
We now distinguish two cases according to $j - 1$ and $d$:
\begin{itemize}
\item \textbf{Case 1:} $j - 1 \le d$. By \eqref{eq:residual-divisibility}, there exists a homogeneous binary form $v_j \in \F_q[\overline{U}_{j,1}, \overline{U}_{j,2}]_{d-j+1}$ such that
\[
 r_j = \left(\prod_{i=1}^{j-1} \ell_i|_{\ker \ell_j}\right) v_j.
\]
Expanding $v_j = \sum_{m=0}^{d-j+1} b_{j,m} \overline{U}_{j,1}^{d-j+1-m} \overline{U}_{j,2}^m$ with $b_{j,m} \in \F_q$, we define its global homogeneous lift by
\[
 \widetilde{v}_j = \sum_{m=0}^{d-j+1} b_{j,m} U_{j,1}^{d-j+1-m} U_{j,2}^m \in \F_q[X_1, X_2, X_3]_{d-j+1},
\]
which satisfies $\widetilde{v}_j|_{\ker \ell_j} = v_j$, and set
$
 g^{(j)} = g^{(j-1)} + \left(\prod_{i=1}^{j-1} \ell_i\right) \widetilde{v}_j \in \F_q[X_1, X_2, X_3]_d.
$
For $1 \le i < j$,  
$
 g^{(j)}|_{\ker \ell_i} =  g_i,
$
whereas on $\ker \ell_j$, we have
\[
 g^{(j)}|_{\ker \ell_j} = g^{(j-1)}|_{\ker \ell_j} + \left(\prod_{i=1}^{j-1} \ell_i|_{\ker \ell_j}\right) \widetilde{v}_j|_{\ker \ell_j} = g^{(j-1)}|_{\ker \ell_j} + r_j = g_j.
\]

\item \textbf{Case 2:} $j - 1 > d$. If $r_j \ne 0$, comparing degrees in \eqref{eq:residual-divisibility} gives
$
 j - 1 \le d,
$
contradicting $j - 1 > d$. Hence $r_j = 0$, so $g^{(j-1)}|_{\ker \ell_j} = g_j$, and setting $g^{(j)} = g^{(j-1)}$ gives $g^{(j)}|_{\ker \ell_i} = g_i$ for all $1 \le i \le j$.
\end{itemize}
By induction, $g = g^{(N)} \in \F_q[X_1, X_2, X_3]_d$ satisfies $g|_{\ker \ell_i} = g_i$ for all $1 \le i \le N$.

To prove uniqueness, suppose $g, \widetilde{g} \in \F_q[X_1, X_2, X_3]_d$ both satisfy $g|_{\ker \ell_i} = \widetilde{g}|_{\ker \ell_i} = g_i$ for all $1 \le i \le N$. For each $1 \le i \le N$, expanding $g - \widetilde{g}$ in the basis $\{U_{i,1}, U_{i,2}, \ell_i\}$ of $\F_q[X_1, X_2, X_3]_1$ gives
$
 g - \widetilde{g} = P_i(U_{i,1}, U_{i,2}) + \ell_i Q_i
$
for some $P_i \in \F_q[U_{i,1}, U_{i,2}]_d$ and $Q_i \in \F_q[X_1, X_2, X_3]_{d-1}$. Restricting to $\ker \ell_i$ yields
\[
 0 = (g - \widetilde{g})|_{\ker \ell_i} = P_i(\overline{U}_{i,1}, \overline{U}_{i,2}).
\]
Since $\overline{U}_{i,1}, \overline{U}_{i,2}$ are linearly independent, $P_i = 0$ and thus $g - \widetilde{g} = \ell_i Q_i$, meaning $\ell_i \mid (g - \widetilde{g})$ in $\F_q[X_1, X_2, X_3]$.
This leads to
$
 \prod_{i=1}^N \ell_i \;\Big|\; (g - \widetilde{g}).
$
If $g - \widetilde{g} \ne 0$, then $N = \deg\bigl(\prod_{i=1}^N \ell_i\bigr) \le \deg(g - \widetilde{g}) = d$, contradicting $d < N$. Therefore $g = \widetilde{g}$.
\end{proof}

With the algebraic reconstruction established, we now bound the dimension of the orthogonal complement for the sum of the local subcodes. 
This upper bound is governed by the dimension of a specific homogeneous polynomial kernel.
Define
\begin{equation}\label{eq:critical-kernel}
 \kappa=\dim_{\F_q}
 \bigl\{T\in\F_q[X_1,X_2,X_3]_{h-2}:JT\in(X_1^m,X_2^m,X_3^m)\bigr\}.
\end{equation}
\begin{proposition}\label{prop:colon}
We have
$
 \dim_{\F_q}\left(\sum_{a\in\Lambda_0^c}
          \cD_{\Lambda_0\cup\{a\}}\right)^\perp\le3+\kappa,
$
where the orthogonal complement is taken in $\F_q^{\Lambda_0^c}$.
\end{proposition}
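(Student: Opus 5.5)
The plan is to bound the space $\mathcal Z=\bigl(\sum_{a}\cD_{\Lambda_0\cup\{a\}}\bigr)^\perp$ by building a linear map from it into the critical kernel of \eqref{eq:critical-kernel}, and showing that the map's own kernel is at most three-dimensional. Several steps below are conjectural, and the construction of the degree-$q$ form in the third step is not yet pinned down.

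\textbf{Local conditions.} By Lemma~\ref{lem:coordinates}, $\bz\in\mathcal Z$ if and only if, for every $a\in\Lambda_0^c$, there are unique scalars $\mu_{a,1},\ldots,\mu_{a,h}$ with
\[
W_{\bz}\bmod L_a=\sum_{j=1}^h\mu_{a,j}M_{a,j}^{q-2}.
\]
Uniqueness comes from the linear independence part of that lemma.

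\textbf{Gluing.} I would multiply these local identities by $\Phi_0|_{\ker L_a}=\prod_j M_{a,j}$. Each summand then becomes $\mu_{a,j}M_{a,j}^{q-1}\prod_{l\ne j}M_{a,l}$. I would then try to see these restricted forms as the restrictions of one ternary form. The compatibility on $\ker L_a\cap\ker L_b$ should be automatic, because both sides there are restrictions of the single global form $W_{\bz}\Phi_0$. The natural tool is the Frobenius derivation $\delta$ together with Lemma~\ref{lem:product}. Since $\delta W_{\bz}=0$, we get $\delta(W_{\bz}\Phi_0)=W_{\bz}\Phi_0\sum_a L_a^{q-1}$. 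On $\ker L_a$, the factor $\sum_b L_b^{q-1}$ restricts to the indicator-type form whose values encode the occupied points. Comparing with $\widetilde\delta(M^{q-1})=M^{q-1}\cdot M^{q-1}$-type identities, I expect the correction terms to be pinned down line by line.

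Lemma~\ref{lem:glue} applies whenever the degree is below $N=q+2-h$. Combined with the divisibility argument used in its uniqueness part, this should produce a decomposition
\[
(\text{explicit global form of degree } q \text{ built from } \bz)=\Theta_{\bz}+J\,T_{\bz},\qquad \Theta_{\bz}\in(X_1^m,X_2^m,X_3^m)_q,\ T_{\bz}\in\F_q[X_1,X_2,X_3]_{h-2}.
\]
The membership $\Theta_{\bz}\in(X_1^m,X_2^m,X_3^m)$ uses the fact that a form which is a perfect $m$-th power, here a square iterated, of a linear form lies in the Frobenius ideal $(X_1^m,X_2^m,X_3^m)$. The source of such powers is the square-root rewriting $W_{\bz}=\bigl(\sum_i z_i^{1/2}L_i^{m-1}\bigr)^2$ and its local analogue $\bigl(\sum_j\mu_{a,j}^{1/2}M_{a,j}^{m-1}\bigr)^2$.

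\textbf{The map and its kernel.} The assignment $\bz\mapsto T_{\bz}$, taken modulo the ambiguity in the decomposition, is Frobenius-semilinear. Its image lies in the space defining $\kappa$, because $JT_{\bz}$ must lie in the Frobenius ideal once the left-hand side does. For the kernel, I would show that $T_{\bz}=0$ forces all $\mu_{a,j}=0$, and hence $W_{\bz}\equiv0$ on every line. Then $\bz$ is a relation among the powers $L_i^{q-2}$ restricted to lines. I expect the only such $\bz$ to be the evaluation vectors $\bz=(\lambda(\overline{\bv}_i))_{i\in\Lambda_0^c}$ of linear functionals $\lambda$ on $(\Vbar{\Lambda_0})^*$, that is, the codewords of the planar $[q+2-h,3]$ MDS code of $\overline{\cA}$. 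These do lie in $\mathcal Z$ by the argument of Proposition~\ref{prop:actual}, and they contribute the summand $3$. To confirm there is nothing more, I would use Lemma~\ref{lem:glue} (uniqueness in degree $q-2<N$) to lift the line-wise vanishing to a global statement. I would then invoke the MDS property of the dual of the planar code, via Lemma~\ref{lem:mds-basic}. Rank–nullity then gives $\dim\mathcal Z\le3+\kappa$.

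\textbf{Main obstacle.} The hardest step is the gluing, namely producing the global degree-$q$ identity with exactly a $J\cdot(\text{degree } h-2)$ remainder. The degree bookkeeping is delicate, since $J$ has degree $q+2-h$ and $J\,T_{\bz}$ must have degree $q$. I would first test this on small $h$ ($h=1,2$), where $\kappa$ is trivially $0$ or small.
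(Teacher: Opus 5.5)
\textbf{The construction of $T_{\bz}$ is missing.} Your overall architecture is the paper's: a semilinear map $\bz\mapsto T_{\bz}$ from $\cZ$ into the space defining $\kappa$, a kernel of dimension at most $3$, and rank--nullity. But the map is never defined. The gluing you describe is vacuous: $\sum_j\mu_{a,j}M_{a,j}^{q-1}\prod_{l\ne j}M_{a,l}$ is just $(\Phi_0W_{\bz})|_{\ker L_a}$, which is already a global form. Its degree $q+h-2$ is also not below $N=q+2-h$ once $h\ge2$.

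The missing idea is to \emph{lower} the degree using $u^{q-2}=u^{-1}$ on $\F_q^*$. The forms $\Psi_{\bz,a}=\sum_j\mu_{a,j}\prod_{l\ne j}M_{a,l}$ have degree $h-1<N$. They agree with $\Phi_0W_{\bz}$ at the rational points $\phi_{ab}\in\ker L_a\cap\ker L_b$, where no $M_{a,j}$ vanishes, so Lemma~\ref{lem:glue} yields a global $\Psi_{\bz}$. The relations $\widetilde\delta(M_{a,j})=M_{a,j}^q$ and $\widetilde\delta(M_{a,l}^2)=0$ then give $\delta(\Phi_0\Psi_{\bz})\equiv\Phi_0^2W_{\bz}\pmod{L_a}$ for every $a$. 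With Lemma~\ref{lem:product}, this makes $F_0$ divide $\Phi_0^2W_{\bz}+\delta(\Phi_0\Psi_{\bz})$. The quotient is killed by $\delta$, hence equals $T_{\bz}^2$.

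Membership of $JT_{\bz}$ in $(X_1^m,X_2^m,X_3^m)$ does not come from $m$-th powers of linear forms. It is true that $W_{\bz}$ lies in that ideal, since $L^{q-2}=L^mL^{m-2}$, but nothing in your proposal links $W_{\bz}$ to $JT_{\bz}$. The membership comes from the auxiliary form $K_{\bz}=\sum_a z_aF_0/L_a+J\Psi_{\bz}$, which satisfies $\delta K_{\bz}=J^2T_{\bz}^2$. The parity decomposition $K_{\bz}=\sum_iX_iA_i^2+X_1X_2X_3B^2$ then forces $B=0$ and $JT_{\bz}=\sum_iX_i^mA_i$. None of $\Psi_{\bz}$, $T_{\bz}^2$ as a quotient, or $K_{\bz}$ appears in your plan.

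\textbf{The kernel step is false as stated.} You claim that $T_{\bz}=0$ forces all $\mu_{a,j}=0$. For $h=1$ the target of your map is zero, so this would say every $\bz\in\cZ$ has $W_{\bz}|_{\ker L_a}=0$. That includes the planar codewords, which you correctly place in $\cZ$, and it fails for them.

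By the equivalence in the proof of Lemma~\ref{lem:coordinates}, $W_{\bz}|_{\ker L_a}=0$ if and only if $\sum_{i\ne a}z_i\,g(\bp_i)^2=0$ for all $g\in\mathcal F_{m-1}$; this sum is invariant under rescaling columns. Take the codeword of the functional $\alpha$ from the normalization before Proposition~\ref{prop:actual}, with $\Lambda=\Lambda_0\cup\{a\}$. Its normalized entries are $z_s=1$ for $s\in\cH^c$ and $z_\infty=0$. With $g=T_1^{m-1}$ the sum is $|\cH^c|=q-1\ne0$, so this planar codeword has nonzero $\mu_{a,j}$.

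Separately, lifting line-wise vanishing of $W_{\bz}$ by the uniqueness in Lemma~\ref{lem:glue} needs $q-2<q+2-h$, that is $h\le3$. The paper instead bounds the kernel using the injectivity of $\bz\mapsto K_{\bz}$: the restriction of $K_{\bz}$ to $\ker L_a$ is $z_a\Phi_0\prod_{c\ne a}L_c$, a nonzero form times $z_a$. When $T_{\bz}=0$, the syzygy $\sum_iX_i^mA_i=0$ in degree $2m$ forces each $A_i$ to be a combination of the $X_j^m$. This puts $K_{\bz}$ in the $3$-dimensional span of the forms $X_i^qX_j+X_iX_j^q$. No identification of the kernel with the planar code and no MDS argument is needed.
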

\begin{proof}
Let $\cZ = \bigcap_{a\in\Lambda_0^c}\cD_{\Lambda_0\cup\{a\}}^\perp$. Then
$\cZ$ can be viewed as a subspace of $\F_q^{\Lambda_0^c}$.
Fix $\bz\in\cZ $. For each $a\in\Lambda_0^c$,
$
 \Phi_0|_{\ker L_a}=\prod_{j=1}^hM_{a,j}.
$
By Lemma~\ref{lem:coordinates}, there exist unique scalars $\mu_{a,1},\ldots,\mu_{a,h}\in\F_q$ such that
\begin{equation}\label{eq:local-annihilator-powers}
 W_{\bz}|_{\ker L_a}=\sum_{j=1}^h\mu_{a,j}M_{a,j}^{q-2}.
\end{equation}
On each subspace $\ker L_a$, define the homogeneous binary form of degree $h-1$ by
\[
 \Psi_{\bz,a}=\sum_{j=1}^h\mu_{a,j}
 \prod_{\substack{1\le\ell\le h\\\ell\ne j}}M_{a,\ell}.
\]
For any distinct $a,b\in\Lambda_0^c$, choose a nonzero vector $\phi_{ab}\in\ker L_a\cap\ker L_b$. 
We have $M_{a,j}(\phi_{ab})\ne0$ for all $1\le j\le h$. Using $u^{-1}=u^{q-2}$ for all $u\in\F_q^*$ 
alongside \eqref{eq:local-annihilator-powers}, we obtain
\begin{align*}
 \Psi_{\bz,a}(\phi_{ab})
 &=\sum_{j=1}^h\mu_{a,j}\frac{\Phi_0(\phi_{ab})}{M_{a,j}(\phi_{ab})}
  =\Phi_0(\phi_{ab})\sum_{j=1}^h\mu_{a,j}M_{a,j}(\phi_{ab})^{q-2}\\
 &=\Phi_0(\phi_{ab})W_{\bz}(\phi_{ab})
  =\Psi_{\bz,b}(\phi_{ab}),
\end{align*}
giving
 $\Psi_{\bz,a}|_{\ker L_a\cap\ker L_b} = \Psi_{\bz,b}|_{\ker L_a\cap\ker L_b}$. 
Because $0\le h-1 < q+2-h = |\Lambda_0^c|$, Lemma~\ref{lem:glue} provides a unique homogeneous 
polynomial $\Psi_{\bz}\in\F_q[X_1,X_2,X_3]_{h-1}$ satisfying
$
 \Psi_{\bz}|_{\ker L_a}=\Psi_{\bz,a}$
 for any 
$a\in\Lambda_0^c$.
For each $a\in\Lambda_0^c$, restricting the product $\Phi_0\Psi_{\bz}$ to $\ker L_a$ gives
\[
 (\Phi_0\Psi_{\bz})\big|_{\ker L_a}
 = \left(\prod_{\ell=1}^h M_{a,\ell}\right)\left(\sum_{j=1}^h\mu_{a,j}\prod_{\substack{1\le\ell\le h\\\ell\ne j}}M_{a,\ell}\right)
 = \sum_{j=1}^h\mu_{a,j}M_{a,j}\prod_{\substack{1\le\ell\le h\\\ell\ne j}}M_{a,\ell}^2.
\]
Let $\widetilde{\delta}$ be the derivation on the coordinate ring of $\ker L_a$ induced by $\delta$. 
Since $\widetilde{\delta}(M_{a,j}) = M_{a,j}^q$ and $\widetilde{\delta}(M_{a,\ell}^2) = 0$, applying $\widetilde{\delta}$ yields
\[
\begin{aligned}
 \delta(\Phi_0\Psi_{\bz})\big|_{\ker L_a}
 &= \widetilde{\delta}\bigl((\Phi_0\Psi_{\bz})|_{\ker L_a}\bigr)
  = \sum_{j=1}^h\mu_{a,j}M_{a,j}^q\prod_{\substack{1\le\ell\le h\\\ell\ne j}}M_{a,\ell}^2\\
 &= \left(\prod_{\ell=1}^h M_{a,\ell}\right)^2 \sum_{j=1}^h\mu_{a,j}M_{a,j}^{q-2}
  = (\Phi_0^2W_{\bz})\big|_{\ker L_a}.
\end{aligned}
\]
This implies  $L_a\mid\bigl(\Phi_0^2W_{\bz}+\delta(\Phi_0\Psi_{\bz})\bigr)$ for all $a\in\Lambda_0^c$. 
Since the linear forms $\{L_a\}_{a\in\Lambda_0^c}$ are pairwise coprime in $\F_q[X_1,X_2,X_3]$, 
their product $J = \prod_{a\in\Lambda_0^c} L_a$ divides $\Phi_0^2W_{\bz}+\delta(\Phi_0\Psi_{\bz})$. 
Moreover, by the Leibniz rule and Lemma~\ref{lem:product},
\[
 \Phi_0^2W_{\bz}+\delta(\Phi_0\Psi_{\bz})
 = \Phi_0^2W_{\bz} + (\delta\Phi_0)\Psi_{\bz} + \Phi_0\delta\Psi_{\bz}
 = \Phi_0\left(\Phi_0W_{\bz} + \Psi_{\bz}\sum_{a\in\Lambda_0^c}L_a^{q-1} + \delta\Psi_{\bz}\right),
\]
so $\Phi_0$ also divides $\Phi_0^2W_{\bz}+\delta(\Phi_0\Psi_{\bz})$. Since $\gcd(J,\Phi_0)=1$ and $F_0=J\Phi_0$, it follows that 
$F_0\mid\bigl(\Phi_0^2W_{\bz}+\delta(\Phi_0\Psi_{\bz})\bigr)$. Comparing total degrees via 
$\deg(\Phi_0^2W_{\bz}+\delta(\Phi_0\Psi_{\bz})) = 2h+q-2$ and $\deg F_0 = q+2$, we obtain a unique 
homogeneous polynomial $Q_{\bz}\in\F_q[X_1,X_2,X_3]_{2h-4}$ such that
\begin{equation}\label{eq:annihilator-quotient}
 \Phi_0^2W_{\bz}+\delta(\Phi_0\Psi_{\bz})
 =F_0Q_{\bz}.
\end{equation}
If $h=1$, then $2h-4  < 0$, which forces $\Phi_0^2W_{\bz}+\delta(\Phi_0\Psi_{\bz}) = 0$, and we set $Q_{\bz}=T_{\bz}=0$.
Suppose now that $h\ge2$. Applying $\delta$ to \eqref{eq:annihilator-quotient} and using $\delta(\Phi_0^2)=2\Phi_0\delta\Phi_0=0$, $\delta W_{\bz}=0$, $\delta^2=0$, and $\delta F_0=0$ gives
$
 F_0\delta Q_{\bz} = 0.
$
Since $F_0\ne0$, canceling $F_0$ yields
\begin{equation}\label{eq:delta-Qz-zero}
 \delta Q_{\bz} = \sum_{i=1}^3 X_i^q\frac{\partial Q_{\bz}}{\partial X_i} = 0.
\end{equation}
Expand $Q_{\bz}$ in the monomial basis of $\F_q[X_1,X_2,X_3]_{2h-4}$:
\[
 Q_{\bz} = \sum_{n_1+n_2+n_3=2h-4} c_{n_1,n_2,n_3} X_1^{n_1}X_2^{n_2}X_3^{n_3} \qquad (c_{n_1,n_2,n_3}\in\F_q).
\]
Because $h\le m-1 = q/2-1$, we have $0\le 2h-4< q$. Thus, for each $i\in\{1,2,3\}$, every nonzero monomial occurring in 
$X_i^q\frac{\partial Q_{\bz}}{\partial X_i}$ has $X_i$-degree $n_i-1+q\ge q$ and $X_j$-degree $n_j\le 2h-4 < q$ for all $j\ne i$. 
Consequently, the sets of monomials appearing in $X_i^q\frac{\partial Q_{\bz}}{\partial X_i}$ for $i=1,2,3$ are pairwise disjoint, 
so \eqref{eq:delta-Qz-zero} requires each summand to vanish individually:
\[
 \frac{\partial Q_{\bz}}{\partial X_i} = \sum_{n_1+n_2+n_3=2h-4} n_i c_{n_1,n_2,n_3} X_1^{n_1}\cdots X_i^{n_i-1}\cdots X_3^{n_3} = 0 \qquad (1\le i\le 3).
\]
Hence $n_i c_{n_1,n_2,n_3} = 0$ in $\F_q$ for all $1\le i\le 3$. In characteristic two, whenever $c_{n_1,n_2,n_3}\ne0$, the exponents $n_1,n_2,n_3$ must all be even, 
say $(n_1,n_2,n_3)=(2m_1,2m_2,2m_3)$ with $m_1+m_2+m_3=h-2$. Setting
\[
 T_{\bz} = \sum_{m_1+m_2+m_3=h-2} c_{2m_1,2m_2,2m_3}^{1/2} X_1^{m_1}X_2^{m_2}X_3^{m_3} \in \F_q[X_1,X_2,X_3]_{h-2}
\]
gives the unique homogeneous polynomial satisfying
$
 Q_{\bz} = T_{\bz}^2,
$
which also holds for $h=1$ under the convention $Q_{\bz}=T_{\bz}=0$.
Next, define the homogeneous polynomial of degree $q+1$ by
\[
 K_{\bz}=\sum_{a\in\Lambda_0^c}z_a\frac{F_0}{L_a}+J\Psi_{\bz}\in\F_q[X_1,X_2,X_3]_{q+1}.
\]
For each $a\in\Lambda_0^c$, applying $\delta$ to $F_0 = L_a(F_0/L_a)$ via $\delta F_0=0$ 
and $\delta L_a = L_a^q$ gives
\[
 0 = \delta F_0 = (\delta L_a)\frac{F_0}{L_a} + L_a\delta\left(\frac{F_0}{L_a}\right) = L_a\left(F_0L_a^{q-2} + \delta\left(\frac{F_0}{L_a}\right)\right),
\]
so $\delta(F_0/L_a) = F_0L_a^{q-2}$. Using $\delta J = J\sum_{a\in\Lambda_0^c}L_a^{q-1}$ from the proof of 
Lemma~\ref{lem:product} and $F_0=J\Phi_0$, we compute
\begin{align*}
 \delta K_{\bz}
 &=\sum_{a\in\Lambda_0^c}z_aF_0L_a^{q-2}+(\delta J)\Psi_{\bz}+J\delta\Psi_{\bz}
  =F_0W_{\bz}+(\delta J)\Psi_{\bz}+J\delta\Psi_{\bz}\\
 &=J\left(\Phi_0W_{\bz}
     +\Psi_{\bz}\sum_{a\in\Lambda_0^c}L_a^{q-1}
     +\delta\Psi_{\bz}\right).
\end{align*}
Multiplying by $\Phi_0$ and substituting $\delta\Phi_0=\Phi_0\sum_{a\in\Lambda_0^c}L_a^{q-1}$ from Lemma~\ref{lem:product} 
alongside \eqref{eq:annihilator-quotient} yields
\begin{align*}
 \Phi_0\delta K_{\bz}
 &=J\bigl(\Phi_0^2W_{\bz}
          +(\delta\Phi_0)\Psi_{\bz}+\Phi_0\delta\Psi_{\bz}\bigr)\\
 &=J\bigl(\Phi_0^2W_{\bz}+\delta(\Phi_0\Psi_{\bz})\bigr)
  =\Phi_0J^2T_{\bz}^2,
\end{align*}
giving
$
 \delta K_{\bz}=J^2T_{\bz}^2.
$
Since $\deg K_{\bz} = q+1 = 2m+1$ is odd, for every monomial $X_1^{d_1}X_2^{d_2}X_3^{d_3}$ of degree $2m+1$, the sum $d_1+d_2+d_3$ is 
odd, so the exponent parity triple $(d_1\bmod 2, d_2\bmod 2, d_3\bmod 2)$ must belong to $\{(1,0,0), (0,1,0), (0,0,1), (1,1,1)\}$. 
Grouping monomials of $K_{\bz}$ into these four parity classes and factoring out $X_1, X_2, X_3$, or $X_1X_2X_3$ leaves purely 
even exponents; taking square roots of the coefficients yields a unique decomposition
\begin{equation}\label{eq:Kz-parity-decomp}
 K_{\bz}=X_1A_1^2+X_2A_2^2+X_3A_3^2+X_1X_2X_3B^2,
\end{equation}
where $A_1,A_2,A_3\in\F_q[X_1,X_2,X_3]_m$ and $B\in\F_q[X_1,X_2,X_3]_{m-1}$. 
Applying $\delta = \sum_{i=1}^3 X_i^q\frac{\partial}{\partial X_i}$ to \eqref{eq:Kz-parity-decomp} gives
\[
 \delta K_{\bz}=\sum_{i=1}^3X_i^qA_i^2+
 (X_1^qX_2X_3+X_1X_2^qX_3+X_1X_2X_3^q)B^2.
\]
Because $q=2m$ is even, every monomial occurring in $J^2T_{\bz}^2$ and in $\sum_{i=1}^3 X_i^qA_i^2 = \sum_{i=1}^3 (X_i^m A_i)^2$ 
has exponent parity triple $(0,0,0)$, whereas the monomials in $X_1^qX_2X_3B^2$, $X_1X_2^qX_3B^2$, and $X_1X_2X_3^qB^2$ 
have exponent parity triples $(0,1,1)$, $(1,0,1)$, and $(1,1,0)$, respectively. Since these four parity classes of monomials are pairwise disjoint, 
equating $\delta K_{\bz} = J^2T_{\bz}^2$ forces $X_1^qX_2X_3B^2 = 0$, hence $B = 0$, and
\[
 (JT_{\bz})^2 = \sum_{i=1}^3 X_i^{2m}A_i^2 = (X_1^mA_1 + X_2^mA_2 + X_3^mA_3)^2.
\]
Taking the unique square root in $\F_q[X_1,X_2,X_3]$ yields
\begin{equation}\label{eq:colon-membership}
 JT_{\bz}=X_1^mA_1+X_2^mA_2+X_3^mA_3\in(X_1^m,X_2^m,X_3^m).
\end{equation}
Thus $T_{\bz}$ belongs to the subspace $\bigl\{T\in\F_q[X_1,X_2,X_3]_{h-2}:JT\in(X_1^m,X_2^m,X_3^m)\bigr\}$ of dimension $\kappa$.

By the linear independence of $M_{a,1}^{q-2},\ldots,M_{a,h}^{q-2}$ in \eqref{eq:local-annihilator-powers}, each coefficient $\mu_{a,j}$ is 
an $\F_q$-linear function of $\bz\in\cZ$. Consequently, $\bz\mapsto\Psi_{\bz,a}$ is $\F_q$-linear, and the uniqueness in Lemma~\ref{lem:glue} 
ensures that $\bz\mapsto\Psi_{\bz}$ is $\F_q$-linear. It follows from \eqref{eq:annihilator-quotient} and the definition of $K_{\bz}$ that both 
$\bz\mapsto Q_{\bz}$ and $\bz\mapsto K_{\bz}$ are $\F_q$-linear. 
Since $Q_{\bz}=T_{\bz}^2$, for any $\lambda\in\F_q$ and $\bz,\bw\in\cZ$ we have
$
 T_{\lambda\bz+\bw}=\lambda^{1/2}T_{\bz}+T_{\bw}.
$
Thus the map $\Theta:\cZ\to\F_q[X_1,X_2,X_3]_{h-2}$ given by $\Theta(\bz)=T_{\bz}$ is inverse-Frobenius-semilinear, and its image 
$\operatorname{im}\Theta$ is an $\F_q$-subspace of dimension at most $\kappa$.

To bound $\dim_{\F_q}\ker\Theta$, suppose $\bz\in\ker\Theta$, so that $T_{\bz}=0$. Then \eqref{eq:colon-membership} reduces to
\begin{equation}\label{eq:syzygy-relation}
 X_1^mA_1+X_2^mA_2+X_3^mA_3=0,
 \qquad A_1,A_2,A_3\in\F_q[X_1,X_2,X_3]_m.
\end{equation}
A monomial of total degree $2m$ can be divisible by both $X_i^m$ and $X_j^m$ for $1\le i<j\le 3$ if and only if it equals $X_i^mX_j^m$. 
Hence, any monomial in $A_i$ other than $X_j^m$ ($j\ne i$) produces a monomial in $X_i^mA_i$ that cannot appear in the other two summands 
of \eqref{eq:syzygy-relation} and must therefore have coefficient zero. Thus there exist scalars $\alpha_1,\alpha_2,\beta_1,\beta_2,\gamma_1,\gamma_2\in\F_q$ such that
\[
 A_1=\alpha_1 X_2^m+\beta_1 X_3^m,\qquad
 A_2=\alpha_2 X_1^m+\gamma_1 X_3^m,\qquad
 A_3=\beta_2 X_1^m+\gamma_2 X_2^m.
\]
Substituting these expressions into \eqref{eq:syzygy-relation} gives
\[
 (\alpha_1+\alpha_2)X_1^mX_2^m + (\beta_1+\beta_2)X_1^mX_3^m + (\gamma_1+\gamma_2)X_2^mX_3^m = 0.
\]
By the linear independence of monomials and characteristic two, we obtain $\alpha_1=\alpha_2=\alpha$, $\beta_1=\beta_2=\beta$, and 
$\gamma_1=\gamma_2=\gamma$ for some $\alpha,\beta,\gamma\in\F_q$. Substituting $A_1,A_2,A_3$ and $B=0$ into \eqref{eq:Kz-parity-decomp} and using $2m=q$ yields
\[
\begin{aligned}
 K_{\bz}
 &= X_1(\alpha^2 X_2^q + \beta^2 X_3^q) + X_2(\alpha^2 X_1^q + \gamma^2 X_3^q) + X_3(\beta^2 X_1^q + \gamma^2 X_2^q)\\
 &= \alpha^2(X_1^qX_2+X_1X_2^q)
   +\beta^2(X_1^qX_3+X_1X_3^q)
   +\gamma^2(X_2^qX_3+X_2X_3^q).
\end{aligned}
\]
Thus, for all $\bz\in\ker\Theta$, the polynomial $K_{\bz}$ lies in the subspace 
$\Span_{\F_q}\{X_1^qX_2+X_1X_2^q,\, X_1^qX_3+X_1X_3^q,\, X_2^qX_3+X_2X_3^q\}$ of dimension at most $3$.

Finally, we verify that the $\F_q$-linear map $\bz\mapsto K_{\bz}$ is injective on $\cZ$. Suppose $K_{\bz}=0$. Then
\[
 \sum_{b\in\Lambda_0^c}z_b\Phi_0\prod_{c\in\Lambda_0^c\setminus\{b\}}L_c + \left(\prod_{c\in\Lambda_0^c}L_c\right)\Psi_{\bz} = 0.
\]
For any fixed $a\in\Lambda_0^c$, restricting this identity to $\ker L_a$ annihilates the term $\bigl(\prod_{c\in\Lambda_0^c}L_c\bigr)\Psi_{\bz}$ 
and every summand with $b\ne a$ (since $L_a|_{\ker L_a}=0$), leaving
\[
 z_a(\Phi_0|_{\ker L_a})\prod_{c\in\Lambda_0^c\setminus\{a\}}(L_c|_{\ker L_a})=0.
\]
Because $\Phi_0|_{\ker L_a}\ne0$ and $L_c|_{\ker L_a}\ne0$ for all $c\ne a$, their product in the integral domain of polynomial 
forms on $\ker L_a$ is nonzero, which forces $z_a=0$ for every $a\in\Lambda_0^c$, so $\bz=\mathbf{0}$. 
Consequently, the injectivity of $\bz\mapsto K_{\bz}$ implies $\dim_{\F_q}\ker\Theta\le3$. 
Applying the rank-nullity theorem to the inverse-Frobenius-semilinear map $\Theta$ gives
\[
 \dim_{\F_q}\cZ
 =\dim_{\F_q}\ker\Theta
   +\dim_{\F_q}\operatorname{im}\Theta
 \le3+\kappa,
\]
completing the proof.
\end{proof}

\section{Dimension bounds and proof of the main theorem}
To evaluate the critical kernel $\kappa$, we first establish that multiplication by $J$ is injective on low-degree forms modulo the ideal $(X_1^m, X_2^m, X_3^m)$. Recall that $\deg J=q+2-h$. The following lemma relies only on the rational lines $\ker L_a$ for $a\in\Lambda_0^c$ and the non-concurrency condition $\ker L_a \cap \ker L_b \cap \ker L_c = \{\mathbf{0}\}$ for distinct $a,b,c\in\Lambda_0^c$.

\begin{lemma}\label{lem:inject}
Let $0\le r\le h-3$, and let $Q\in\F_q[X_1,X_2,X_3]_r$. If 
$
 JQ\in(X_1^m,X_2^m,X_3^m)$
 then $Q=0.
$
\end{lemma}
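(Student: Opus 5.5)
The plan is to test membership in $\mathfrak a=(X_1^m,X_2^m,X_3^m)$ through the Gorenstein duality of the Artinian algebra $\cG=\F_q[X_1,X_2,X_3]/\mathfrak a$. This algebra is a complete intersection with socle degree $s=3m-3$ and socle spanned by $(X_1X_2X_3)^{m-1}$. Hence a form $P$ of degree $d\le s$ lies in $\mathfrak a$ if and only if, for every $g\in\F_q[X_1,X_2,X_3]_{s-d}$, the coefficient of $(X_1X_2X_3)^{m-1}$ in $Pg$ vanishes.

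I would turn this coefficient into a sum over points, using characteristic two. Since $q=2m$, squaring gives
\[
 \bigl[(X_1X_2X_3)^{m-1}\bigr](Pg) \;=\; \Bigl(\bigl[(X_1X_2X_3)^{q-1}\bigr]\bigl(X_1X_2X_3(Pg)^2\bigr)\Bigr)^{1/2}.
\]
The form $X_1X_2X_3(Pg)^2$ has degree exactly $3q-3$. The standard character-sum identity $\sum_{x\in\F_q}x^e=-1$ for $e>0$ with $(q-1)\mid e$, and $0$ otherwise, then shows that the only contributing monomial is $(X_1X_2X_3)^{q-1}$. Therefore
\[
 JQ\in\mathfrak a \iff \sum_{x\in\F_q^3}x_1x_2x_3\,J(x)^2Q(x)^2g(x)^2=0 \quad\text{for all } g\in\F_q[X_1,X_2,X_3]_{s-\deg JQ}.
\]
Since $J$ vanishes on every line $\ker L_a$, only points $x$ off the line arrangement $\bigcup_a\ker L_a$ contribute. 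The summand has degree $3q-3$, a multiple of $q-1$, so it is constant on $\F_q^*$-orbits and the sum can be taken over projective points.

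With $d=\deg(JQ)=q+2-h+r$ and $r\le h-3$, we get $\deg g=s-d=m-5+h-r\ge m-2$. The task is then an interpolation statement. Let $\cI_{\mathrm{out}}$ be the set of projective points off the arrangement; by the non-concurrency condition it has roughly $q^2/2$ elements. Suppose $Q\ne0$. I want a $g$ of degree $m-5+h-r$ whose square, weighted by the nonzero values $x_1x_2x_3J(x)^2Q(x)^2$, has nonzero total sum over $\cI_{\mathrm{out}}$. Equivalently, the functional $g\mapsto\sum_{x\in\cI_{\mathrm{out}}}w(x)g(x)^2$, with $w=x_1x_2x_3J^2Q^2$, must be shown not identically zero. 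Because Frobenius is bijective, this reduces to showing that the weighted evaluation $g\mapsto\sum_x w(x)^{1/2}g(x)$ is a nonzero linear functional on forms of degree $m-5+h-r$.

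The main obstacle is this nonvanishing, since $w$ has zeros inside $\cI_{\mathrm{out}}$ along $X_1X_2X_3=0$ and along the curve $Q=0$. My first attempt is to pick a single point $x_0\in\cI_{\mathrm{out}}$ with $w(x_0)\ne0$ and try to build $g$ vanishing at all other points of the support of $w$. This is impossible at the available degree, so instead I would choose $g$ as a product of linear forms through rational lines. These kill all but the points of one further rational line $\ell$ off the coordinate triangle. On $\ell$ the problem becomes binary, and the sum is a coefficient extraction for a univariate form of degree below $q$, which I would handle with the Euler--Lagrange identity as in Proposition~\ref{prop:dual}.

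The degree count has to be verified at this step. Fixing $\ell$ leaves only $m-5+h-r-(\text{lines used})$ free degrees, and the hypothesis $r\le h-3$ together with $q+2-h$ lines should be exactly what makes the count close. If this direct choice fails, the fallback is to run the same reduction with the derivation $\delta$. This works because $\delta$ preserves $\mathfrak a$ (since $m$ is even) and $\delta J=J\sum_aL_a^{q-1}$, so it lets one trade $Q$ for $\sum_aL_a^{q-1}Q+\delta Q$ and argue by descent on $r$.
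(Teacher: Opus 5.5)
Your reduction is correct. By the duality of $\F_q[X_1,X_2,X_3]/(X_1^m,X_2^m,X_3^m)$ and the character-sum identity, $JQ$ lies in the ideal if and only if $\sum_{x\in\F_q^3}(x_1x_2x_3)^mJ(x)Q(x)g(x)=0$ for every form $g$ of degree $D=m-5+h-r$. But this is an equivalent restatement of the lemma. All of its content is in the step you leave open: producing, for $Q\ne0$, a $g$ with nonzero weighted sum. The construction you sketch for that step does not work.

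Let $N=q+2-h$ be the number of lines $\ker L_a$; no three of them are concurrent. Then the plane has $q^2+q+1-N(q+1)+\binom N2=2m^2-m+\binom h2$ points off the arrangement. Any other rational line meets the arrangement in at least $\lceil N/2\rceil$ points, so it carries at most $q+1-\lceil N/2\rceil$ of those points. Take $h=3$, $r=0$ (so $Q$ is a nonzero constant) and $q\ge16$:
\begin{itemize}
\item the available degree is $D=m-2$;
\item every line carries at most $m+1$ support points;
\item the support of the weight (off the arrangement and off $X_1X_2X_3=0$) has at least $2m^2-4m$ points.
\end{itemize}
A product of $m-2$ lines removes at most $(m-2)(m+1)$ support points. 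That leaves at least $(m-1)(m-2)>m+1$, and these cannot all lie on one further line $\ell$. So the degree count you postponed is off by roughly a factor of two, not merely unchecked.

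The fallback does not help either. If $JQ$ is in the ideal, then so is $\delta(JQ)=J\bigl(\delta Q+Q\sum_aL_a^{q-1}\bigr)$. But the new cofactor has degree $r+q-1$, so $J$ times it has degree $2q+1-h+r>3(m-1)$, and every form of that degree already lies in the ideal. Applying $\delta$ to $Q$ pushes past the socle degree and gives no information, so there is no descent on $r$.

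The missing idea is to apply $\delta$ to an auxiliary form built from a representation $JQ=\sum_iX_i^mA_i$:
\begin{itemize}
\item The hypothesis $r\le h-3$ is exactly what gives $\deg A_i=d=m+2-h+r\le m-1$.
\item Set $K=\sum_iX_iA_i^2$. It has odd degree $2d+1<q$, with $\partial K/\partial X_i=A_i^2$ and $\delta K=(JQ)^2$.
\item On each $\ker L_a$ the induced derivation $U_1^q\partial_{U_1}+U_2^q\partial_{U_2}$ kills $K|_{\ker L_a}$. In odd degree below $q$ this forces $K|_{\ker L_a}=0$, so $J\mid K$.
\item $J$ is singular at every node $\ker L_a\cap\ker L_b$. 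Differentiating $K=JB$ therefore shows each $A_i$ vanishes at the $q+1-h>d$ nodes on every line.
\item Hence $J\mid A_i$, and $A_i=0$ by degree, so $Q=0$.
\end{itemize}
This argument uses the many nodes on the arrangement lines, compared with $d$. It never has to control the roughly $q^2/2$ points off the arrangement, which is where your approach breaks down.
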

\begin{proof}
Since $JQ\in(X_1^m,X_2^m,X_3^m)$,
there are homogeneous polynomials $A_1,A_2,A_3\in\F_q[X_1,X_2,X_3]_d$ such that
\begin{equation}\label{eq:inject-relation}
 JQ=X_1^mA_1+X_2^mA_2+X_3^mA_3,
\end{equation}
where the degree $d$ satisfies
$
 d=(q+2-h)+r-m=m+2-h+r\le m-1.
$
Define the homogeneous polynomial
\[
 K=X_1A_1^2+X_2A_2^2+X_3A_3^2\in\F_q[X_1,X_2,X_3]_{2d+1}.
\]
The total degree of $K$ is the odd integer
$
 \deg K = 2d+1 =  q+5-2h+2r \le  q-1.
$
For each $1\le i\le 3$,  differentiating $K$ with respect to $X_i$ gives
\[
 \frac{\partial K}{\partial X_i} = \sum_{j=1}^3\left(\frac{\partial X_j}{\partial X_i}A_j^2 + X_j\frac{\partial(A_j^2)}{\partial X_i}\right) = A_i^2.
\]
Consequently, applying $\delta = \sum_{i=1}^3 X_i^q\frac{\partial}{\partial X_i}$ and using $q=2m$ 
together with \eqref{eq:inject-relation} yields
\begin{equation}\label{eq:inject-derivatives}
 \delta K=\sum_{i=1}^3X_i^qA_i^2=\left(\sum_{i=1}^3X_i^mA_i\right)^2=(JQ)^2.
\end{equation}

Fix an index $a\in\Lambda_0^c$. Since $L_a\mid J$, restricting $\delta K = (JQ)^2$ to $\ker L_a$ gives $(\delta K)|_{\ker L_a} = 0$. 
Under the coordinate ring isomorphism $\F_q[X_1,X_2,X_3]/(L_a) \cong \F_q[U_1,U_2]$ on $\ker L_a$, let 
$\overline{K}_a = K|_{\ker L_a} \in \F_q[U_1,U_2]_{2d+1}$. The induced derivation 
$\widetilde{\delta} = U_1^q\frac{\partial}{\partial U_1} + U_2^q\frac{\partial}{\partial U_2}$ satisfies
\begin{equation}\label{eq:delta-Ka-zero}
 \widetilde{\delta}(\overline{K}_a) = U_1^q\frac{\partial\overline{K}_a}{\partial U_1} + U_2^q\frac{\partial\overline{K}_a}{\partial U_2} = (\delta K)|_{\ker L_a} = 0.
\end{equation}
Expand $\overline{K}_a$ in the monomial basis of $\F_q[U_1,U_2]_{2d+1}$:
\[
 \overline{K}_a = \sum_{n_1+n_2=2d+1} c_{n_1,n_2} U_1^{n_1}U_2^{n_2} \qquad (c_{n_1,n_2}\in\F_q).
\]
Because $n_1+n_2 = 2d+1 < q$, every nonzero monomial occurring in $U_1^q\frac{\partial\overline{K}_a}{\partial U_1}$ has $U_1$-degree 
$n_1-1+q\ge q$ and $U_2$-degree $n_2 < q$, whereas every nonzero monomial in $U_2^q\frac{\partial\overline{K}_a}{\partial U_2}$ 
has $U_1$-degree $n_1 < q$ and $U_2$-degree $n_2-1+q\ge q$. Since these two sets of monomials are disjoint, \eqref{eq:delta-Ka-zero} 
forces each summand to vanish individually:
\[
 \frac{\partial\overline{K}_a}{\partial U_1} = \sum_{n_1+n_2=2d+1} n_1 c_{n_1,n_2} U_1^{n_1-1}U_2^{n_2} = 0, \qquad
 \frac{\partial\overline{K}_a}{\partial U_2} = \sum_{n_1+n_2=2d+1} n_2 c_{n_1,n_2} U_1^{n_1}U_2^{n_2-1} = 0.
\]
By the linear independence of monomials, $n_1 c_{n_1,n_2} = n_2 c_{n_1,n_2} = 0$ in $\F_q$ for all pairs $(n_1,n_2)$ with $n_1+n_2 = 2d+1$. Since $2d+1$ is odd, 
every such pair contains at least one odd integer $n_i$, for which $n_i = 1$ in $\F_q$, forcing $c_{n_1,n_2} = 0$. Thus $\overline{K}_a = K|_{\ker L_a} = 0$, 
which means $L_a\mid K$ for every $a\in\Lambda_0^c$. 
It follows that there exists a polynomial $B\in\F_q[X_1,X_2,X_3]$ such that
$
 K=JB.
$
For any distinct $a,b\in\Lambda_0^c$, choose a nonzero vector $\phi_{ab}\in\ker L_a\cap\ker L_b$, and factor $J$ as
\[
 J = L_a L_b J_{ab}, \qquad \text{where } J_{ab} = \prod_{c\in\Lambda_0^c\setminus\{a,b\}} L_c.
\]
Since $L_a(\phi_{ab}) = L_b(\phi_{ab}) = 0$, we have $J(\phi_{ab}) = 0$. Furthermore, for each $1\le i\le 3$, applying the Leibniz rule to $J = L_a L_b J_{ab}$ gives
\[
 \frac{\partial J}{\partial X_i} = \frac{\partial L_a}{\partial X_i} L_b J_{ab} + L_a \frac{\partial L_b}{\partial X_i} J_{ab} + L_a L_b \frac{\partial J_{ab}}{\partial X_i}.
\]
Evaluating at $\phi_{ab}$ yields $\frac{\partial J}{\partial X_i}(\phi_{ab}) = 0$ for all $1\le i\le 3$. Differentiating
$ K=JB$ with respect to $X_i$ and combining with $\frac{\partial K}{\partial X_i} = A_i^2$ at $\phi_{ab}$ gives
\[
 A_i(\phi_{ab})^2
 =\frac{\partial K}{\partial X_i}(\phi_{ab})
 =\frac{\partial J}{\partial X_i}(\phi_{ab})B(\phi_{ab}) + J(\phi_{ab})\frac{\partial B}{\partial X_i}(\phi_{ab})
 =0 \qquad (1\le i\le 3),
\]
and hence $A_i(\phi_{ab}) = 0$ for every $1\le i\le 3$ and all distinct $a,b\in\Lambda_0^c$.

Finally, fix $i\in\{1,2,3\}$ and $a\in\Lambda_0^c$. For each $b\in\Lambda_0^c\setminus\{a\}$, the restricted linear form $L_b|_{\ker L_a}$ on $\ker L_a$ 
has kernel $\ker L_a\cap\ker L_b = \Span_{\F_q}\{\phi_{ab}\}$. Since $A_i(\phi_{ab}) = 0$, the form $A_i|_{\ker L_a}$ vanishes on 
$\ker(L_b|_{\ker L_a})$, so $L_b|_{\ker L_a}$ divides $A_i|_{\ker L_a}$. By the non-concurrency condition 
$\ker L_a\cap\ker L_b\cap\ker L_c = \{\mathbf{0}\}$, the $q+1-h$ linear forms $\{L_b|_{\ker L_a}\}_{b\in\Lambda_0^c\setminus\{a\}}$ 
are pairwise nonproportional and hence pairwise coprime, which implies
\[
 \prod_{b\in\Lambda_0^c\setminus\{a\}}\bigl(L_b|_{\ker L_a}\bigr) \;\Big|\; A_i|_{\ker L_a}.
\]
Since $h\le m-1$ and $q=2m$, the degree $d = \deg A_i$ satisfies
\[
 d \le m-1 < 2m+1-h = q+1-h = \deg\left(\prod_{b\in\Lambda_0^c\setminus\{a\}} L_b|_{\ker L_a}\right).
\]
This forces $A_i|_{\ker L_a} = 0$, so $L_a\mid A_i$ for every $a\in\Lambda_0^c$, and consequently $J = \prod_{a\in\Lambda_0^c} L_a$ divides $A_i$. 
Because $\deg A_i = d < q+2-h = \deg J$, we obtain $A_1 = A_2 = A_3 = 0$, yielding $Q = 0$.
\end{proof}

We now utilize the injectivity established in Lemma~\ref{lem:inject} to analyze the colon ideal associated with $J$. By demonstrating that its corresponding quotient ring forms a standard graded Artinian Gorenstein algebra, we can apply the unimodality of its Hilbert function to rigorously bound the critical kernel $\kappa$.
\begin{proposition}\label{prop:kappa}
The critical kernel in \eqref{eq:critical-kernel} satisfies
$
 \kappa\le h-1.
$
\end{proposition}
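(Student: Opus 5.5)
The plan is to realize the critical kernel as a graded piece of a Gorenstein quotient and then use Lemma~\ref{lem:unimodal} together with Lemma~\ref{lem:inject}. Set $R=\F_q[X_1,X_2,X_3]$ and $\mathfrak c=(X_1^m,X_2^m,X_3^m)$. The algebra $R/\mathfrak c$ is a complete intersection, hence Artinian Gorenstein, with socle degree $3(m-1)$.

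Consider the colon ideal $\mathfrak a=(\mathfrak c:J)$ and the standard graded algebra $\cG=R/\mathfrak a$. Since $J\notin\mathfrak c$ (otherwise Lemma~\ref{lem:inject} would already fail at $r=0$ when $h\ge3$; in general this needs to be checked separately), $\mathfrak a$ is proper. Multiplication by $J$ gives an isomorphism of graded vector spaces
\[
\cG_r\cong J\cdot(R/\mathfrak c)_{r}\subseteq(R/\mathfrak c)_{r+\deg J}.
\]
The standard fact that a colon ideal of an Artinian Gorenstein ideal by a nonzero element is again Gorenstein then applies. Its socle is the preimage of the socle of $R/\mathfrak c$, because $\cG\cong J(R/\mathfrak c)$ is a nonzero ideal of a Gorenstein ring and so contains the unique socle line. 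Hence $\cG$ is Artinian Gorenstein of embedding dimension at most three, with socle degree
\[
s=3(m-1)-\deg J=3m-3-(2m+2-h)=m-5+h.
\]
By definition,
\[
\kappa=\dim\mathfrak a_{h-2}=\dim R_{h-2}-\dim\cG_{h-2}.
\]
Lemma~\ref{lem:inject} says $\mathfrak a_r=0$ for $r\le h-3$, so $\dim\cG_r=\binom{r+2}{2}$ for those $r$, and in particular $\dim\cG_{h-3}=\binom{h-1}{2}$.

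The key step is to show $\dim\cG_{h-2}\ge\dim\cG_{h-3}$ using Lemma~\ref{lem:unimodal}. This requires $h-2\le\lfloor s/2\rfloor$, that is, $2h-4\le m-5+h$, i.e.\ $h\le m-1$, which is exactly our standing hypothesis. Granting it,
\[
\kappa\le\binom{h}{2}-\binom{h-1}{2}=h-1.
\]
The small cases need separate treatment. If $h=1$, then $R_{h-2}=0$ and $\kappa=0$. If $h=2$, then $\kappa\le\dim R_0=1$. If $\mathfrak a=R$ (that is, $J\in\mathfrak c$), the bound $\kappa\le\dim R_{h-2}$ is too weak. We must instead rule this out: $J\in\mathfrak c$ contradicts Lemma~\ref{lem:inject} when $h\ge3$, since $Q=1$ of degree $0\le h-3$ would give $J\cdot1\in\mathfrak c$. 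For $h\le2$ the trivial bounds already suffice.

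I expect the main obstacle to be justifying rigorously that $\cG=R/(\mathfrak c:J)$ is Gorenstein with the stated socle degree. The plan is to use the perfect pairing $(R/\mathfrak c)_r\times(R/\mathfrak c)_{3m-3-r}\to\F_q$ given by multiplication followed by projection to the socle. Under this pairing, $u\in\cG_r$ is zero exactly when $uJ\in\mathfrak c$. One then checks that $\cG_r\times\cG_{s-r}\to\F_q$, $(u,v)\mapsto$ the socle coefficient of $uvJ$, is perfect. This perfect pairing yields a one-dimensional socle in degree $s$. A secondary point is verifying that the degree bookkeeping keeps $h-2\le\lfloor s/2\rfloor$ in the boundary case $h=m-1$, where $s=2m-6=2(h-2)$ holds with equality.
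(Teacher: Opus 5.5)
Your proposal is correct and follows the paper's proof essentially step for step. The paper also disposes of $h\le 2$ trivially, uses Lemma~\ref{lem:inject} at $r=0$ to get $J\notin(X_1^m,X_2^m,X_3^m)$ and at $r=h-3$ to get $\dim\cG_{h-3}=\binom{h-1}{2}$, shows that $\cG=\F_q[X_1,X_2,X_3]/((X_1^m,X_2^m,X_3^m):J)$ is Gorenstein with socle degree $s=m+h-5$ via the perfect pairing $([u],[v])\mapsto$ coefficient of $X_1^{m-1}X_2^{m-1}X_3^{m-1}$ in $Juv$, and applies Lemma~\ref{lem:unimodal} at degree $h-2\le s/2$.
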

\begin{proof}
Let $\mathfrak{q}=(X_1^m,X_2^m,X_3^m)\subseteq\F_q[X_1,X_2,X_3]$, 
so that $\kappa=\dim_{\F_q}\{T\in\F_q[X_1,X_2,X_3]_{h-2}:JT\in\mathfrak{q}\}$. 
If $h=1$, then $\F_q[X_1,X_2,X_3]_{h-2}=\{0\}$ and $\kappa=0$; if $h=2$, then $\F_q[X_1,X_2,X_3]_{h-2}=\F_q$ has dimension $1$, so $\kappa\le 1$.
Assume henceforth that $h\ge3$.
For every homogeneous polynomial 
\[
P = \sum_{n_1+n_2+n_3=3(m-1)} c_{n_1,n_2,n_3} X_1^{n_1}X_2^{n_2}X_3^{n_3} \in \F_q[X_1,X_2,X_3]_{3(m-1)},
\]
define the $\F_q$-linear functional $\mathcal{L}:\F_q[X_1,X_2,X_3]_{3(m-1)}\to\F_q$ 
by extracting the coefficient of $X_1^{m-1}X_2^{m-1}X_3^{m-1}$:
$
\mathcal{L}(P)
 =[X_1^{m-1}X_2^{m-1}X_3^{m-1}]P
 =c_{m-1,m-1,m-1}.
$
For each $d\ge0$, let $\mathfrak{q}_d=\mathfrak{q}\cap\F_q[X_1,X_2,X_3]_d$ denote the degree-$d$ homogeneous component of 
$\mathfrak{q}$. Every monomial $X_1^{n_1}X_2^{n_2}X_3^{n_3}$ occurring in $\mathfrak{q}$ is divisible by $X_i^m$ 
for at least one $i\in\{1,2,3\}$, meaning such $n_i\ge m$. Consequently, 
$
\mathcal{L}(\mathfrak{q}_{3(m-1)})=\{0\}.
$
Moreover, if $d>3(m-1)$, then $
 \F_q[X_1,X_2,X_3]_d = \mathfrak{q}_d.
 $
For each $0\le d\le 3(m-1)$, the residue classes of the monomials in
\[
 \mathcal{M}_d = \bigl\{X_1^{u_1}X_2^{u_2}X_3^{u_3} : 0\le u_1,u_2,u_3\le m-1,\ u_1+u_2+u_3=d\bigr\}
\]
form an $\F_q$-basis of the quotient space $\F_q[X_1,X_2,X_3]_d/\mathfrak{q}_d$. Consider the bilinear pairing 
\[
\bigl(\F_q[X_1,X_2,X_3]_d/\mathfrak{q}_d\bigr)\times\bigl(\F_q[X_1,X_2,X_3]_{3(m-1)-d}/\mathfrak{q}_{3(m-1)-d}\bigr)\longrightarrow\F_q,
\]
\begin{equation}\label{eq:ambient-perfect-pairing}
 (u+\mathfrak{q}_d,\,v+\mathfrak{q}_{3(m-1)-d})\longmapsto \mathcal{L}(uv).
\end{equation}
If $u'=u+q_1$ with $q_1\in\mathfrak{q}_d$ and $v'=v+q_2$ with $q_2\in\mathfrak{q}_{3(m-1)-d}$, then $u'v'-uv = q_1v + uq_2 + q_1q_2 \in \mathfrak{q}_{3(m-1)}$,  
showing that $\mathcal{L}(u'v')=\mathcal{L}(uv)$
and 
$ \eqref{eq:ambient-perfect-pairing}$ is well-defined. Furthermore, for any basis monomials 
$X_1^{u_1}X_2^{u_2}X_3^{u_3}\in\mathcal{M}_d$ and $X_1^{v_1}X_2^{v_2}X_3^{v_3}\in\mathcal{M}_{3(m-1)-d}$, we have
\[
\mathcal{L}(X_1^{u_1+v_1}X_2^{u_2+v_2}X_3^{u_3+v_3})
 =\begin{cases}
 1,&\text{if }(v_1,v_2,v_3)=(m-1-u_1,\,m-1-u_2,\,m-1-u_3),\\
 0,&\text{otherwise}.
 \end{cases}
\]
Because $0\le u_i\le m-1$ if and only if $0\le m-1-u_i\le m-1$, each basis element $X_1^{u_1}X_2^{u_2}X_3^{u_3}+\mathfrak{q}_d$ pairs to $1$ with the unique complementary basis element $X_1^{m-1-u_1}X_2^{m-1-u_2}X_3^{m-1-u_3}+\mathfrak{q}_{3(m-1)-d}$ and to $0$ with all other basis elements. Thus the residue classes of $\mathcal{M}_{3(m-1)-d}$ form the exact dual basis to those of $\mathcal{M}_d$, proving that \eqref{eq:ambient-perfect-pairing} is a perfect pairing.

Applying Lemma~\ref{lem:inject} with 
$r=0$ and $Q=1\in\F_q[X_1,X_2,X_3]_0$ 
shows that $J\notin\mathfrak{q}$. Define the homogeneous colon ideal and 
the corresponding standard graded quotient algebra:
\[
 (\mathfrak{q}:J) = \{u\in\F_q[X_1,X_2,X_3]:Ju\in\mathfrak{q}\} = 
 \bigoplus_{i\ge0}(\mathfrak{q}:J)_i, \qquad
 \cG = \F_q[X_1,X_2,X_3]/(\mathfrak{q}:J) = 
 \bigoplus_{i\ge0}\cG_i,
\]
where $(\mathfrak{q}:J)_i = \{u\in\F_q[X_1,X_2,X_3]_i:Ju\in\mathfrak{q}_{q+2-h+i}\}$ and $\cG_i = \F_q[X_1,X_2,X_3]_i/(\mathfrak{q}:J)_i$. Let
$
 s=m+h-5.
$
For any $i>s$ and any $u\in\F_q[X_1,X_2,X_3]_i$, we have $\deg(Ju)=q+2-h+i >  3(m-1)$, 
thus $Ju\in\F_q[X_1,X_2,X_3]_{q+2-h+i}=\mathfrak{q}_{q+2-h+i}\subseteq\mathfrak{q}$. 
Hence $(\mathfrak{q}:J)_i=\F_q[X_1,X_2,X_3]_i$, which gives
$
 \cG_i=\{0\} 
$
for all $i>s$.
In particular, $\cG$ is finite-dimensional over $\F_q$.

For each $0\le i\le s$, write $[u]=u+(\mathfrak{q}:J)_i\in\cG_i$ for $u\in\F_q[X_1,X_2,X_3]_i$, and consider the bilinear pairing
\begin{equation}\label{eq:colon-perfect-pairing}
 \cG_i\times\cG_{s-i}\longrightarrow\F_q,
 \qquad([u],[v])\longmapsto \mathcal{L}(Juv),
\end{equation}
where $\deg(Juv)=3(m-1)$. If $u'=u+w_1$ with $w_1\in(\mathfrak{q}:J)_i$ and $v'=v+w_2$ with $w_2\in(\mathfrak{q}:J)_{s-i}$, then $Jw_1\in\mathfrak{q}$ and $Jw_2\in\mathfrak{q}$, and thus
\[
 Ju'v'-Juv = (Jw_1)v + u(Jw_2) + (Jw_1)w_2 \in \mathfrak{q}_{3(m-1)},
\]
implying $\mathcal{L}(Ju'v')=\mathcal{L}(Juv)$. 
Thus \eqref{eq:colon-perfect-pairing} is well-defined. Moreover, if $[u]\in\cG_i\setminus\{0\}$, 
then  $Ju\notin\mathfrak{q}_{q+2-h+i}$. Since $3(m-1)-(q+2-h+i)=s-i$, the perfect pairing \eqref{eq:ambient-perfect-pairing} provides a homogeneous polynomial $v\in\F_q[X_1,X_2,X_3]_{s-i}$ such that $\mathcal{L}(Juv)=\mathcal{L}((Ju)v)\ne0$, which implies $[v]\ne0$ in $\cG_{s-i}$. Interchanging $u$ and $v$ shows by symmetry that \eqref{eq:colon-perfect-pairing} is nondegenerate on both sides and hence is a perfect pairing for every $0\le i\le s$. For $i=0$, since $J\notin\mathfrak{q}$, we have $(\mathfrak{q}:J)_0=\{0\}$ and $\cG_0=\F_q$, so the perfect pairing for $i=0$ yields
$
 \dim_{\F_q}\cG_s = \dim_{\F_q}\cG_0 = 1.
$

Let $\mathfrak{m}_{\cG}=([X_1],[X_2],[X_3])=\bigoplus_{i=1}^s\cG_i$ be the homogeneous maximal ideal of $\cG$, and let $\operatorname{Soc}(\cG)=\{x\in\cG:\mathfrak{m}_{\cG}x=\{0\}\}$ be the socle of $\cG$. Since $\cG_{s+1}=\{0\}$, we have $[X_j]\cG_s\subseteq\cG_{s+1}=\{0\}$ for $1\le j\le 3$, so $\cG_s\subseteq\operatorname{Soc}(\cG)$. Conversely, suppose $0\le i<s$ and $[u]\in\cG_i\setminus\{0\}$. By the perfect pairing \eqref{eq:colon-perfect-pairing}, there exists $[v]\in\cG_{s-i}$ such that $\mathcal{L}(Juv)\ne0$, which implies $[u][v]=[uv]\ne0$ in $\cG_s$. Because $s-i\ge1$, we can decompose $v\in\F_q[X_1,X_2,X_3]_{s-i}$ as $v=\sum_{j=1}^3 X_j v_j$ with $v_j\in\F_q[X_1,X_2,X_3]_{s-i-1}$. Passing to $\cG_s$ gives
$
 0 \ne [u][v] = \sum_{j=1}^3 [v_j]\bigl([X_j][u]\bigr).
$
Consequently, $[X_j][u]\ne0$ for at least one $j\in\{1,2,3\}$,
giving $[u]\notin\operatorname{Soc}(\cG)$. 
Therefore $\operatorname{Soc}(\cG)=\cG_s$ has dimension $1$, proving that $\cG$ is a standard graded Artinian Gorenstein algebra of socle degree $s$ and embedding dimension $\dim_{\F_q}\cG_1\le\dim_{\F_q}\F_q[X_1,X_2,X_3]_1=3$.
Since $h\le m-1$, we have
$h-2\le s/2$. Applying Lemma~\ref{lem:inject} with $r=h-3\ge0$ shows that $(\mathfrak{q}:J)_{h-3}=\{0\}$. Consequently,
\[
 \dim_{\F_q}\cG_{h-3}
 =\dim_{\F_q}\F_q[X_1,X_2,X_3]_{h-3}-\dim_{\F_q}(\mathfrak{q}:J)_{h-3}
 =\binom{h-1}{2}.
\]
Applying Lemma~\ref{lem:unimodal} to $\cG$ at degree $h-2$ gives
$
 \dim_{\F_q}\cG_{h-2}\ge\dim_{\F_q}\cG_{h-3}=\binom{h-1}{2}.
$
By \eqref{eq:critical-kernel}, $\kappa=\dim_{\F_q}(\mathfrak{q}:J)_{h-2}=\dim_{\F_q}\F_q[X_1,X_2,X_3]_{h-2}-\dim_{\F_q}\cG_{h-2}$. 
Using $\dim_{\F_q}\F_q[X_1,X_2,X_3]_{h-2}=\binom{h}{2}$, we conclude that
\begin{align*}
 \kappa
 &=\dim_{\F_q}\F_q[X_1,X_2,X_3]_{h-2}-\dim_{\F_q}\cG_{h-2}\\
 &\le\binom{h}{2}-\dim_{\F_q}\cG_{h-3}\\
 &=\binom{h}{2}-\binom{h-1}{2}
 =h-1,
\end{align*}
completing the proof.
\end{proof}

Combining the bound on the orthogonal complement with the estimate for the critical kernel yields the explicit lower bound for the dimension of the sum of the local subcodes.
\begin{lemma}\label{lem:main}
For the family of subcodes
$\{\cD_{\Lambda_0\cup\{a\}}\}_{a\in\Lambda_0^c}$, we have
\begin{equation}\label{eq:canonical-span-bound}
 \dim_{\F_q}\left(\sum_{a\in\Lambda_0^c}
       \cD_{\Lambda_0\cup\{a\}}\right)\ge q-2h.
\end{equation}
\end{lemma}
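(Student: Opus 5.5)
The bound follows by combining Proposition~\ref{prop:colon} with Proposition~\ref{prop:kappa} and counting dimensions in the ambient space $\F_q^{\Lambda_0^c}$. After the identically zero coordinates in $\Lambda_0$ are deleted, every subcode $\cD_{\Lambda_0\cup\{a\}}$ lives in $\F_q^{\Lambda_0^c}$. This space has dimension $|\Lambda_0^c| = q+2-h$. The standard Euclidean pairing on $\F_q^{\Lambda_0^c}$ is nondegenerate, so for any subspace $S$ we have $\dim S + \dim S^\perp = q+2-h$. I will apply this with $S=\sum_{a\in\Lambda_0^c}\cD_{\Lambda_0\cup\{a\}}$.

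The steps are as follows.

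\textbf{Step 1.} Identify $S^\perp$ with $\cZ=\bigcap_{a}\cD_{\Lambda_0\cup\{a\}}^\perp$. This holds because the orthogonal complement of a sum is the intersection of the complements; Proposition~\ref{prop:colon} bounds exactly this space.

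\textbf{Step 2.} Proposition~\ref{prop:colon} gives $\dim \cZ\le 3+\kappa$, and Proposition~\ref{prop:kappa} gives $\kappa\le h-1$. Hence $\dim S^\perp\le h+2$.

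\textbf{Step 3.} Conclude
\[
\dim S \ge (q+2-h)-(h+2) = q-2h .
\]

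Before this, I will check that the standing hypotheses of Section~4 are in force, since both propositions were proved under them. These hypotheses are: $m=q/2\ge 4$, $4\le k\le m+1$, $h=k-3$ (so $1\le h\le m-1$), and $n=q+2$. With these, Lemma~\ref{lem:input} applies to the projected planar arc $\overline{\cA}$, and Lemmas~\ref{lem:coordinates}, \ref{lem:product}, \ref{lem:glue} and \ref{lem:inject} feed into the two propositions.

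The small cases $h=1,2$ need no separate argument. In them $\kappa\le h-1$ was verified directly in Proposition~\ref{prop:kappa}, and Proposition~\ref{prop:colon} handled $h=1$ with the convention $T_{\bz}=0$.

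There is no real obstacle, since all the substantive work sits in the two propositions. The only point needing care is that the orthogonal complement in Proposition~\ref{prop:colon} is taken in $\F_q^{\Lambda_0^c}$ and not in $\F_q^{\cI}$. Deleting the zero coordinates on $\Lambda_0$ preserves dimensions of the subcodes and of their sum, so the count above is legitimate.
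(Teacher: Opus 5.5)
Your proposal is correct and is essentially the paper's own proof. The paper also combines Proposition~\ref{prop:colon} with Proposition~\ref{prop:kappa} to get $\dim_{\F_q}\bigl(\sum_{a}\cD_{\Lambda_0\cup\{a\}}\bigr)^\perp\le 3+\kappa\le h+2$ inside $\F_q^{\Lambda_0^c}$, and subtracts this from $|\Lambda_0^c|=q+2-h$, using that deleting the zero coordinates on $\Lambda_0$ preserves dimension. (A minor remark: the standing range $4\le k\le m+1$ actually gives $h\le m-2$, which implies the $h\le m-1$ you state.)
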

\begin{proof}
Since every summand vanishes on $\Lambda_0$, coordinate projection onto $\Lambda_0^c$ is injective on the space
$\sum_{a\in\Lambda_0^c}\cD_{\Lambda_0\cup\{a\}}$ and thus preserves its dimension as a subspace of $\F_q^{\Lambda_0^c}$.
Equip $\F_q^{\Lambda_0^c}$ with the standard nondegenerate bilinear pairing 
$\langle\bb,\bz\rangle=\sum_{i\in\Lambda_0^c}b_iz_i$, and let 
$\left(\sum_{a\in\Lambda_0^c}\cD_{\Lambda_0\cup\{a\}}\right)^\perp\subseteq\F_q^{\Lambda_0^c}$ 
denote the orthogonal complement of 
$\sum_{a\in\Lambda_0^c}\cD_{\Lambda_0\cup\{a\}}$ in $\F_q^{\Lambda_0^c}$. 
Combining Propositions~\ref{prop:colon} and  \ref{prop:kappa} yields
\[
 \dim_{\F_q}\left(\sum_{a\in\Lambda_0^c}
       \cD_{\Lambda_0\cup\{a\}}\right)^\perp
 \le 3+\kappa
 \le 3+(h-1)
 = h+2.
\]
We have
\[
 \dim_{\F_q}\left(\sum_{a\in\Lambda_0^c}
       \cD_{\Lambda_0\cup\{a\}}\right)
 =(q+2-h)-\dim_{\F_q}\left(\sum_{a\in\Lambda_0^c}
       \cD_{\Lambda_0\cup\{a\}}\right)^\perp
 \ge q-2h,
\]
as required.
\end{proof}

With the canonical span bound established, we now rule out the existence of linear $[q+2,k]$ MDS codes for the intermediate parameter range. The argument forces a direct dimensional contradiction between the lower bound obtained in Lemma~\ref{lem:main} and the exact capacity of the shortened dual code.
\begin{lemma}\label{lem:no-qplus2}
Let $q$ be an even prime power and $4\le k\le q-2$.
There is no linear $[q+2,k]$ MDS code over $\F_q$.
\end{lemma}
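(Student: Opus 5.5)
We argue by contradiction: assume a linear $[q+2,k]$ MDS code $\cC$ over $\F_q$ exists with $4\le k\le q-2$, and first reduce to the range where the machinery of Sections~3--4 applies. By Lemma~\ref{lem:mds-basic}, $\cC^\perp$ is a $[q+2,q+2-k]$ MDS code, again nontrivial, with $4\le q+2-k\le q-2$. Replacing $\cC$ by $\cC^\perp$ if necessary, we may assume $k\le (q+2)/2=m+1$. Then $h=k-3$ satisfies $1\le h\le m-2\le m-1$, and $t=m-h\ge 1$; all standing hypotheses used in Proposition~\ref{prop:actual} and Lemmas~\ref{lem:product}--\ref{lem:main} hold. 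The size hypothesis $n=q+2\ge h+k-1=2k-4$ of Lemma~\ref{lem:input} is immediate. (For $q=4$ the range $4\le k\le q-2$ is empty, and $e\ge 3$ gives $m\ge4$ as assumed.)

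Next fix any $(k-3)$-subset $\Lambda_0\subset\cI$ and compare two dimensions. On one side, \eqref{eq:common-shortening} places $\sum_{a\in\Lambda_0^c}\cD_{\Lambda_0\cup\{a\}}$ inside $\operatorname{Short}_{\Lambda_0}(\cC^\perp)$. Its dimension was computed before Lemma~\ref{lem:coordinates} via Lemma~\ref{lem:mds-basic}: $\cC^\perp$ has dimension $q+2-k=q-h-1$, and restriction to the $h$ coordinates of $\Lambda_0$ has rank $h$. Hence $\dim\operatorname{Short}_{\Lambda_0}(\cC^\perp)=q-2h-1$. On the other side, Lemma~\ref{lem:main} gives $\dim\sum_a\cD_{\Lambda_0\cup\{a\}}\ge q-2h$. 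The inclusion would give $q-2h\le q-2h-1$, a contradiction, so no such code exists.

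The only delicate point is verifying that every hypothesis feeding Lemma~\ref{lem:main} is met after the duality reduction. We need $h\le m-1$ (used in Lemma~\ref{lem:space}, Lemma~\ref{lem:product}, Lemma~\ref{lem:inject} and Proposition~\ref{prop:kappa}), $k\ge4$ so that $h\ge1$, and that the projected arc $\overline{\cA}$ in the plane is a planar arc of size $q+2-h\ge h+2$. All of these follow from $4\le k\le m+1$. The boundary case $k=m+1$ is self-dual, and there $h=m-2$, so it needs no separate treatment.
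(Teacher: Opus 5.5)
Your proposal is correct and follows the paper's proof. Both arguments use duality to reduce to $4\le k\le m+1$, so that $1\le h\le m-2$. Both then fix a $(k-3)$-subset $\Lambda_0$ and play the lower bound $q-2h$ from Lemma~\ref{lem:main} against $\dim_{\F_q}\operatorname{Short}_{\Lambda_0}(\cC^\perp)=q-2h-1$ via the inclusion \eqref{eq:common-shortening}.
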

\begin{proof}
Suppose for contradiction that a linear $[q+2,k]$ MDS code $\cC$ over $\F_q$ exists for some $4\le k\le q-2$. 
We have $q\ge 8$ and $m=q/2\ge 4$. 
The dual code $\cC^\perp$ is a linear $[q+2, q+2-k]$ MDS code. If $k > m+1$, the dimension of $\cC^\perp$ satisfies 
$4 \le q+2-k \le m$. Therefore, by replacing $\cC$ with $\cC^\perp$ whenever $k > m+1$, we may assume without loss of 
generality that the code dimension $k$ is at most $m+1$. Setting $h=k-3$, this parameter range becomes
\[
 4\le k\le m+1, \qquad 1\le h\le m-2.
\]
Fix an index subset $\Lambda_0\subset\cI$ with $|\Lambda_0|=h$. Since $\cC^\perp$ is an MDS code of dimension
whose dimension is strctly greater than $h$,
the coordinate projection $\pi_{\Lambda_0}:\cC^\perp\to\F_q^{\Lambda_0}$ given by $\bb\mapsto(b_i)_{i\in\Lambda_0}$ 
is surjective of rank $|\Lambda_0|=h$, with kernel $\ker\pi_{\Lambda_0}=\operatorname{Short}_{\Lambda_0}(\cC^\perp)$. The rank-nullity theorem yields
\[
 \dim_{\F_q}\operatorname{Short}_{\Lambda_0}(\cC^\perp)
 =\dim_{\F_q}\cC^\perp - \operatorname{rank}(\pi_{\Lambda_0})
 =q-2h-1.
\]
Combining the subspace inclusion \eqref{eq:common-shortening} with the lower bound \eqref{eq:canonical-span-bound} of Lemma~\ref{lem:main} gives
\[
 q-2h
 \le\dim_{\F_q}\left(\sum_{a\in\Lambda_0^c}
       \cD_{\Lambda_0\cup\{a\}}\right)
 \le\dim_{\F_q}\operatorname{Short}_{\Lambda_0}(\cC^\perp)
 = q-2h-1,
\]
a contradiction.
\end{proof}

With the non-existence result established for intermediate dimensions, the proof of the main theorem is completed by addressing 
the remaining boundary cases via duality and standard puncturing arguments.

\begin{proof}[Proof of Theorem \ref{thm:even}]
By Lemma~\ref{lem:mds-basic},
$2\le k\le\min\{n-2,q-1\}$. Thus no nontrivial code exists
when $q=2$, and we may assume $q\ge4$.
Puncturing an MDS code to any length $N\ge k$ preserves its
dimension and the independence of every $k$ generator columns.
If $k=2$, Lemma~\ref{lem:mds-basic} gives $n\le q+1$.
If $k=3$, the same lemma gives $n\le q+2$.

Suppose $4\le k\le q-2$. If $n\ge q+2$, puncturing to length
$q+2$ gives a linear $[q+2,k]$ MDS code, contrary to
Lemma~\ref{lem:no-qplus2}. Hence $n\le q+1$.
It remains to consider $k=q-1$. For $q=4$ this is the case
$k=3$, already treated. Suppose $q\ge8$ and $n\ge q+3$.
Puncturing to length $q+3$ and taking the dual gives an
$[q+3,4]$ MDS code. Puncturing once more gives an
$[q+2,4]$ MDS code, again contradicting
Lemma~\ref{lem:no-qplus2}. Therefore $n\le q+2$.
These cases cover all possible nontrivial dimensions.
\end{proof}

\section*{Acknowledgments}
The authors used ChatGPT to assist with exploring proof strategies,
checking intermediate calculations, and editing the exposition.
The authors are responsible for all mathematical arguments and for
the final exposition.

\end{document}